%%%%%%%%%%%%%%%%%%%% author.tex %%%%%%%%%%%%%%%%%%%%%%%%%%%%%%%%%%%
%
% sample root file for your "contribution" to a contributed volume
%
% Use this file as a template for your own input.
%
%%%%%%%%%%%%%%%% Springer %%%%%%%%%%%%%%%%%%%%%%%%%%%%%%%%%%

% RECOMMENDED %%%%%%%%%%%%%%%%%%%%%%%%%%%%%%%%%%%%%%%%%%%%%%%%%%%
\documentclass[graybox]{svmult}

% choose options for [] as required from the list
% in the Reference Guide

\usepackage{mathptmx}       % selects Times Roman as basic font
\usepackage{helvet}         % selects Helvetica as sans-serif font
\usepackage{courier}        % selects Courier as typewriter font
\usepackage{type1cm}        % activate if the above 3 fonts are
                            % not available on your system
                            
                            \usepackage[mathscr]{eucal}
\usepackage{amssymb}
\usepackage{amsmath}
\usepackage{graphicx}
\usepackage{float}
\usepackage{epstopdf}
\usepackage{url}
\usepackage{enumerate}

\newcommand\be{\begin{enumerate}}
\newcommand\ee{\end{enumerate}}

\usepackage{makeidx}         % allows index generation
\usepackage{graphicx}        % standard LaTeX graphics tool
                             % when including figure files
\usepackage{multicol}        % used for the two-column index
\usepackage[bottom]{footmisc}% places footnotes at page bottom

% see the list of further useful packages
% in the Reference Guide

\makeindex             % used for the subject index
                       % please use the style svind.ist with
                       % your makeindex program

%%%%%%%%%%%%%%%%%%%%%%%%%%%%%%%%%%%%%%%%%%%%%%%%%%%%%%%%%%%%%%%%%%%%%%%%%%%%%%%%%%%%%%%%%

\begin{document}

\title*{On cubic difference equations with variable coefficients 
 and fading stochastic perturbations}
\titlerunning{Cubic difference equation with variable coefficients}
% Use \titlerunning{Cubic difference equation with variable coefficients} for an abbreviated version of
% your contribution title if the original one is too long
\author{Ricardo Baccas, C\'onall Kelly, and Alexandra Rodkina}
\authorrunning{R. Baccas, C. Kelly, A. Rodkina}
% Use \authorrunning{R. Baccas, C. Kelly, A. Rodkina} for an abbreviated version of
% your contribution title if the original one is too long
\institute{Ricardo Baccas \at Department of Mathematics, University of the West Indies, Mona, Kingston 7, Jamaica.\\ \email{ricardo.baccas02@uwimona.edu.jm}
\and C\'onall Kelly \at School of Mathematical Sciences, University College Cork, Ireland. \\\email{conall.kelly@ucc.ie}
\and Alexandra Rodkina \at Department of Mathematics, University of the West Indies, Mona, Kingston 7, Jamaica.\\ \email{alexandra.rodkina@uwimona.edu.jm}}
%
% Use the package "url.sty" to avoid
% problems with special characters
% used in your e-mail or web address
%
\maketitle

\abstract*{We consider the stochastically perturbed cubic difference equation with variable coefficients 
\begin{equation*}
x_{n+1}=x_n(1-h_nx_n^2)+\rho_{n+1}\xi_{n+1}, \quad n\in \mathbb N,\quad  x_0\in \mathbb R.
\end{equation*}
Here $(\xi_n)_{n\in \mathbb N}$ is a sequence of independent random variables, and $(\rho_n)_{n\in \mathbb N}$  and  $(h_n)_{n\in \mathbb N}$ are sequences of nonnegative real numbers. We can stop the  sequence  $(h_n)_{n\in \mathbb N}$ 
after some random time  $\mathcal N$ so it becomes a constant sequence, where the common value is an $\mathcal{F}_\mathcal{N}$-measurable random variable. We derive conditions on the sequences $(h_n)_{n\in \mathbb N}$, $(\rho_n)_{n\in \mathbb N}$  and $(\xi_n)_{n\in \mathbb N}$,  which guarantee that $\lim_{n\to \infty} x_n$ exists almost surely (a.s.), and that the limit is equal to zero a.s. for any initial value $ x_0\in \mathbb R$.
}

\abstract{We consider the stochastically perturbed cubic difference equation with variable coefficients 
\[
x_{n+1}=x_n(1-h_nx_n^2)+\rho_{n+1}\xi_{n+1}, \quad n\in \mathbb N,\quad  x_0\in \mathbb R.
\]
Here $(\xi_n)_{n\in \mathbb N}$ is a sequence of independent random variables, and $(\rho_n)_{n\in \mathbb N}$  and  $(h_n)_{n\in \mathbb N}$ are sequences of nonnegative real numbers. We can stop the  sequence  $(h_n)_{n\in \mathbb N}$ 
after some random time  $\mathcal N$ so it becomes a constant sequence, where the common value is an $\mathcal{F}_\mathcal{N}$-measurable random variable. We derive conditions on the sequences $(h_n)_{n\in \mathbb N}$, $(\rho_n)_{n\in \mathbb N}$  and $(\xi_n)_{n\in \mathbb N}$,  which guarantee that $\lim_{n\to \infty} x_n$ exists almost surely (a.s.), and that the limit is equal to zero a.s. for any initial value $ x_0\in \mathbb R$.}

\section{Introduction}
In this paper we analyse the global almost sure (a.s.) asymptotic behaviour of solutions of
a cubic difference equation with variable coefficients and subject to stochastic perturbations
\begin{equation}
\label{eq:stochintr}
x_{n+1}=x_n(1-h_nx_n^2)+\rho_{n+1}\xi_{n+1}, \quad n\in \mathbb N,\quad  x_0\in \mathbb R.
\end{equation}
Here $(\xi_n)_{n\in \mathbb N}$ is a sequence of independent identically distributed random variables, $(\rho_n)_{n\in \mathbb N}$ is a sequence of nonnegative reals, and $(h_n)_{n\in \mathbb N}$ is a sequence of nonnegative reals.

When $(\xi_n)_{n\in \mathbb N}$ is an independent sequence of standard Normal random
variables, \eqref {eq:stochintr} can be interpreted as the Euler-Maruyama discretisation of the It\^o-type
stochastic differential equation
\begin{equation}
\label{eq:Ito}
dX_t=-bX_t^3dt+g(t)dW_t, \quad n\in \mathbb N,\quad  X_0\in \mathbb R,
\end{equation}
where $(W_t)_{t\ge 0}$ is a standard Wiener process, $b>0$ is some constant, $g:[0, \infty)\to [0, \infty)$ is  a continuous function. It was shown in \cite{CW} that when $\lim_{t\to \infty}\rho^2(t)\ln t=0$ solutions of stochastic differential equation \eqref{eq:Ito} are globally a.s. asymptotically stable, i.e.  $\lim_{t\to \infty} X_t=0$, almost surely,  for any initial value $ X_0\in \mathbb R$.

There is an extensive literature on the global a.s. asymptotic behaviour of solutions of nonlinear stochastic difference
equations, and the most relevant publications for our purposes are: [1, 2, 3, 4, 5, 7, 14, 15]. However, if the timestep sequence in Eq. \eqref{eq:stochintr} is constant, so that $h_n\equiv h$, the global dynamics of \eqref{eq:Ito} are not preserved and convergence of solutions to zero will only occur on a restricted subset of initial values. An early attempt to address local dynamics in an equation with bounded noise can be found in \cite{F}; general results for equations with fading, state independent noise may be found in \cite{ABR}. In \cite{AKMR} a complete description is given of these local dynamics (see also \cite{ABR} and \cite{AMcR}). It was proved that the set of initial values can be partitioned into a ``stability'' region, within which solutions converge asymptotically to zero, an ``instability'' region, within which solutions rapidly grow without bound, and a region of unknown dynamics that is in some sense small. In the first two cases, the dynamic holds with probability at least $1-\gamma$ for $\gamma \in (0, 1)$. 

In the same article, it was shown that for any initial value $x_0\in \mathbb R$, the behaviour of solution of the difference equation can be made consistent with the corresponding solution of the differential equation, with probability $1-\gamma$, by choosing the stepsize parameter $h$ sufficiently small. This observation motivates the approach taken in this article, wherein the stepsize parameter is allowed to decrease over a random interval in order to capture trajectories within the basin of attraction of the point at zero long enough to ensure asymptotic convergence. 

Several recent publications are devoted to the use of adaptive timestepping in a explicit Euler-Maruyama discretization of nonlinear equations: for example \cite{AKR2010,KL2017,LM,KRR}. In \cite{KL2017} (see also \cite{Giles2017}) it was shown that suitably designed adaptive timestepping strategies could be used to ensure strong convergence of order $1/2$ for a class of equations with non-globally Lipschitz drift, and globally Lipschitz diffusion. These strategies work by controlling the extent of the nonlinear drift response in discrete time and required that the timesteps depend on solution values. In \cite{KRR} an extension of that idea allows an explicit Euler-Maruyama discretisation to reproduce dynamical properties of a class of nonlinear stochastic differential equations with a unique equilibrium solution and non-negative, non-globally Lipschitz drift and diffusion coefficients. The a.s. asymptotic stability and instability of the equilibrium at zero is closely reproduced, and positivity of solutions is preserved with arbitrarily high probability. 

An element that these articles have in common is that the variable time-step depends upon the value of the solution. By contrast, in the present paper the sequence $(h_n)_{n\in \mathbb N}$ does not, and will be the same for any given initial value $x_0\in \mathbb R$.
However since the values of $h_n$ can become arbitrarily small, it is not necessarily the case that $x_n$ converges to zero: in fact if the stepsize sequence is summable we will show that the limit is necessarily nonzero a.s. So we freeze the sequence $(h_n)_{n\in \mathbb N}$ at an appropriate random moment $\mathcal N$, i.e. all step-sizes after $\mathcal N$ are the same: $h_n=h_{\mathcal N}$ for $n\ge \mathcal N$. The time at which this occurs depends on the initial value $x_0$, and is chosen to ensure that $x_n$ converges to zero a.s., as required. 

The structure of the article is as follows. Some necessary technical results are stated in Section \ref{sec:prelim}. In Section \ref{sec:homdet} we construct a timestep sequence $(h_n)_{n\in\mathbb{N}}$ that ensures solutions of the unperturbed cubic difference equation converge to a finite limit, and show that the summability of $(h_n)_{n\in\mathbb{N}}$ determines whether or not that limit is zero. In Section \ref{sec:nohomdet} we examine the convergence of solutions under the influence of a deterministic perturbation, and in Section \ref{sec:randompert} we consider two kinds of stochastic perturbation; one with bounded noise, and one with Gaussian noise. Illustrative numerical examples are provided in Section \ref{sec:exsim}.

\section{Mathematical preliminaries}\label{sec:prelim}
Everywhere in this paper, let  $(\Omega, {\mathcal{F}}, {\mathbb{P}})$ be  a complete probability space. A detailed discussion of probabilistic  concepts  and notation may be found, for example, in Shiryaev~\cite{Shiryaev96}. We will use the following elementary inequality: for each $a,b >0$  and $\alpha\in (0, 1)$ 
\begin{equation}
\label{ineq:alpha}
(a+b)^{\alpha}\le a^{\alpha}+b^{\alpha}.
\end{equation}

The following lemmas also present additional useful technical results:
\begin{lemma}
\label{lem:monf}
Let $f:[0, \infty)\to [0, \infty)$ be a decreasing continuous function, then
\[
 \int_0^{n+1} f(x)dx>\sum_{i=1}^{n}f(i) > \int_1^{n+1} f(x)dx>\sum_{i=2}^{n+1}f(i).
\]
\end{lemma}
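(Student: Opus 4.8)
The plan is to reduce everything to a single per-interval estimate and then sum. The key observation is that for a continuous, strictly decreasing $f$ and any integer $i\ge 0$, the strict pointwise bounds $f(i+1)<f(x)<f(i)$ hold for every $x\in(i,i+1)$; integrating over an interval of unit length preserves strictness, so
\[
f(i+1)<\int_i^{i+1}f(x)\,dx<f(i).
\]
All four inequalities in the statement then follow by summing one of these two bounds over a suitable range of $i$ and using the additivity $\sum_{i=a}^{b-1}\int_i^{i+1}f=\int_a^{b}f$.

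First I would prove the middle inequality $\sum_{i=1}^n f(i)>\int_1^{n+1}f(x)\,dx$ by applying the upper bound $\int_i^{i+1}f<f(i)$ for $i=1,\dots,n$ and adding, collapsing $\sum_{i=1}^n\int_i^{i+1}f$ into $\int_1^{n+1}f$. Symmetrically, the rightmost inequality $\int_1^{n+1}f(x)\,dx>\sum_{i=2}^{n+1}f(i)$ comes from the lower bound $f(i+1)<\int_i^{i+1}f$ summed over $i=1,\dots,n$, after reindexing the resulting left-hand sum $\sum_{i=1}^n f(i+1)$ as $\sum_{i=2}^{n+1}f(i)$.

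For the leftmost inequality $\int_0^{n+1}f(x)\,dx>\sum_{i=1}^n f(i)$ I would again use the lower bound $f(i+1)<\int_i^{i+1}f$, but now summed over $i=0,\dots,n-1$; after reindexing this gives $\sum_{i=1}^n f(i)<\int_0^n f(x)\,dx$, and since $f\ge 0$ we have $\int_0^n f\le\int_0^{n+1}f$, which closes the chain.

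I do not anticipate a genuine obstacle, as this is a standard integral-comparison estimate. The only point requiring care is the source of the \emph{strict} inequalities: they rely on $f$ being strictly decreasing, so that $f(i+1)<f(x)<f(i)$ holds on a set of positive measure, together with continuity to guarantee integrability. If \emph{decreasing} were read in the weak (non-increasing) sense, a locally constant $f$ would yield equalities, so I would state explicitly that strict monotonicity is what drives the strict conclusions.
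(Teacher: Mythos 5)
Your proof is correct. The paper in fact states this lemma without proof, treating it as a standard integral-test comparison, and your argument --- the per-interval bound $f(i+1)<\int_i^{i+1}f(x)\,dx<f(i)$, summed over the appropriate ranges, plus nonnegativity of $f$ to pass from $\int_0^{n}$ to $\int_0^{n+1}$ --- is precisely the canonical argument that fills this gap. Your closing caveat is also well taken: the strict inequalities require strict monotonicity (a weakly decreasing, locally constant $f$ yields equalities), and this is harmless for the paper since its only application, Corollary~\ref{cor:lnk}, uses the strictly decreasing function $f(x)=1/\bigl((x+1)\ln(x+1)\cdots\ln_k(x+e_{[k]}^1)\bigr)$.
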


\begin{lemma}
\label{lem:logest}
\be
\item [(i)] $\ln (1-x)<-x$ for $-\infty<x<0$;
\item [(ii)] For $0<x<\frac{1}{2}$ the following estimate holds
\begin{equation}
\label{est:log}
\ln(1-x)>-2x.
\end{equation}
\ee
\end{lemma}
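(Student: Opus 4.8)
The plan is to prove both inequalities by the standard device of reducing each to a statement about the sign of an auxiliary function, and then analysing that function via its derivative. For part (i), I would set $g(x) := \ln(1-x) + x$ on $(-\infty, 0)$ and aim to show $g(x) < 0$ there. The key computation is $g'(x) = -1/(1-x) + 1 = -x/(1-x)$, which is strictly positive on $(-\infty, 0)$ because both $-x$ and $1-x$ are positive. Hence $g$ is strictly increasing on $(-\infty, 0)$, and since $g$ extends continuously to $g(0) = 0$, we conclude $g(x) < g(0) = 0$ for every $x < 0$, which is exactly $\ln(1-x) < -x$.

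For part (ii), I would proceed identically with $h(x) := \ln(1-x) + 2x$ on $(0, \tfrac12)$, now seeking $h(x) > 0$. Here $h'(x) = -1/(1-x) + 2 = (1-2x)/(1-x)$, and the numerator $1-2x$ is strictly positive precisely on $(0, \tfrac12)$, while the denominator $1-x$ is positive throughout. Thus $h' > 0$ on $(0, \tfrac12)$, so $h$ is strictly increasing, and since $h(0) = 0$ we obtain $h(x) > 0$ on $(0, \tfrac12)$, i.e. $\ln(1-x) > -2x$. This also clarifies the role of the threshold $\tfrac12$: it is exactly the point at which $h'$ changes sign, so the estimate cannot be expected to persist beyond it.

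Neither part presents a genuine obstacle — both are one-line monotonicity arguments once the right auxiliary function is written down. The only points requiring a little care are the retention of the strict inequalities, which is automatic here since the derivatives are strictly signed on the relevant open intervals, and the justification of the boundary evaluation at $x = 0$; the latter is harmless because $g$ and $h$ are continuous up to $0$ and both vanish there. An alternative route for (i) is simply to invoke the familiar fact that $\ln(1+t) < t$ for $t \neq 0$ with $t = -x$, but the unified monotonicity treatment above dispatches both parts by the same method.
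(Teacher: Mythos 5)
Your proof is correct: both monotonicity arguments are sound, the derivative computations $g'(x) = -x/(1-x)$ and $h'(x) = (1-2x)/(1-x)$ have the claimed signs on the respective intervals, and the boundary evaluations at $x=0$ are handled properly. The paper states this lemma as a standard preliminary without providing any proof, so there is nothing to compare against; your argument is exactly the kind of elementary verification the authors left to the reader.
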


\begin{lemma}
\label{lem:product}
Let $q_n\in [0, 1)$ for all $n\in \mathbb N$. Then $\prod_{n=1}^\infty (1-q_n)$ converges to non zero limit if and only if $\sum_{n=1}^\infty q_n$ converges.
\end{lemma}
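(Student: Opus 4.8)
The plan is to prove this classical result (Lemma~\ref{lem:product}) by taking logarithms and exploiting the estimates collected in Lemma~\ref{lem:logest}, which were evidently stated precisely for this purpose. Since each $q_n\in[0,1)$, we have $1-q_n\in(0,1]$, so the partial products $P_N:=\prod_{n=1}^N(1-q_n)$ form a nonincreasing sequence bounded below by $0$. Hence $P_N$ always converges to some limit $L\in[0,1]$; the entire content of the claim is the dichotomy that $L>0$ if and only if $\sum q_n<\infty$. Because $\ln$ is continuous and the $P_N$ are positive, convergence of $P_N$ to a \emph{nonzero} limit is equivalent to convergence of $\ln P_N=\sum_{n=1}^N\ln(1-q_n)$ to a finite limit, so the problem reduces to showing that the series $\sum_{n=1}^\infty\ln(1-q_n)$ converges (in $(-\infty,0]$) if and only if $\sum_{n=1}^\infty q_n$ converges.

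First I would dispose of a degenerate case: if some $q_n=0$ the factor is $1$ and contributes nothing, so without loss of generality I may assume $q_n\in(0,1)$ for the terms that matter, and note that both series are series of nonnegative terms and hence either converge or diverge to $+\infty$ monotonically. For the implication that $\sum q_n<\infty$ forces $\ln P_N$ to converge, I would use part~(i) of Lemma~\ref{lem:logest}, $\ln(1-q_n)<-q_n$ for $q_n>0$, which gives $0\ge -\ln(1-q_n)\ge q_n$... more usefully, I would combine both directions of Lemma~\ref{lem:logest}. The convergence of $\sum q_n$ implies $q_n\to 0$, so eventually $q_n<\tfrac12$, and then part~(ii), $\ln(1-q_n)>-2q_n$, bounds the tail $\sum(-\ln(1-q_n))\le 2\sum q_n<\infty$; combined with each term being nonnegative this shows $\sum(-\ln(1-q_n))$ converges, i.e. $\ln P_N$ converges to a finite limit, so $L=e^{\lim\ln P_N}>0$.

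Conversely, for the contrapositive I would show that if $\sum q_n=\infty$ then $\ln P_N\to-\infty$, forcing $L=0$. Here part~(i) does the work directly: summing $\ln(1-q_n)<-q_n$ over $n=1,\dots,N$ yields $\ln P_N<-\sum_{n=1}^N q_n\to-\infty$, whence $P_N\to 0$. This direction requires no smallness assumption on the $q_n$, so the two halves fit together cleanly.

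The only genuine subtlety — the step I would treat most carefully — is the threshold issue in the forward direction: part~(ii) of Lemma~\ref{lem:logest} is only valid for $0<q_n<\tfrac12$, so I cannot apply it term-by-term to the whole product. The clean way around this is to use that $\sum q_n<\infty$ already guarantees $q_n\to0$, select $N_0$ with $q_n<\tfrac12$ for all $n\ge N_0$, apply the estimate only to the tail $\prod_{n\ge N_0}(1-q_n)$ (which then converges to a nonzero limit), and observe that the finite initial block $\prod_{n<N_0}(1-q_n)$ is a strictly positive constant since each factor is positive; multiplying a nonzero tail limit by a positive constant keeps $L>0$. Everything else is routine manipulation of monotone series and the continuity of $\exp$.
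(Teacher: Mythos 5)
Your proof is correct, but there is nothing in the paper to compare it against: Lemma \ref{lem:product} is stated in the preliminaries as a classical fact about infinite products and is given no proof there. Your argument is the standard one — pass to $\ln P_N=\sum_{n=1}^N\ln(1-q_n)$, note that monotonicity of the positive partial products guarantees a limit $L\in[0,1]$ exists so that the dichotomy $L>0$ versus $L=0$ is equivalent to convergence versus divergence of $\sum(-\ln(1-q_n))$, and then apply the two logarithm estimates — and it is in fact exactly the mechanism the paper itself uses later for its concrete recurrence: the proof of Lemma \ref{lem:finite} uses $\ln(1-x)>-2x$ on a tail where the terms are eventually below $\tfrac12$ (with the same device of discarding a finite initial block) to keep the limit away from zero, and the proof of Lemma \ref{lem:infinite} uses $\ln(1-x)<-x$ to force the limit to zero when the relevant series diverges. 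So your write-up both fills the omitted proof and is stylistically consistent with how Lemma \ref{lem:logest} is deployed elsewhere in the paper. One small caution: you quote part (i) of Lemma \ref{lem:logest} as ``$\ln(1-q_n)<-q_n$ for $q_n>0$,'' whereas the paper states part (i) only for $-\infty<x<0$; the inequality is of course also valid for $0<x<1$ (and the paper itself applies it to positive arguments, e.g.\ in \eqref{approx:est}), but strictly speaking you are using an extension of the lemma as written, which you should either flag or justify in one line, say from strict concavity of $\ln$, or from $e^{-x}>1-x$ for $x\neq0$.
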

We adopt the convention $\displaystyle \prod_{n=i}^j1=1$ if $i>j$ from here forwards. The next result can be found in \cite[Ch. 4.4, Ex. 1]{Shiryaev96}.
\begin{lemma}
\label{lem:norm}
Let $(\xi_n)_{n\in \mathbb N}$ be a sequence of independent  $\mathcal N(0, 1)$ distributed random variables. Then
\begin{equation}
\label{lim:norm}
\mathbb P \left\{\limsup_{n\to \infty}\frac {\xi_n}{\sqrt{2\ln n}} =1 \right\}=1.
\end{equation}
\end{lemma}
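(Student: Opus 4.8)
The plan is to establish the almost-sure equality by a two-sided argument, proving separately that $\limsup_{n\to\infty} \xi_n/\sqrt{2\ln n}\le 1$ and $\limsup_{n\to\infty} \xi_n/\sqrt{2\ln n}\ge 1$ a.s., each up to an arbitrary $\varepsilon>0$. The essential tool throughout is the sharp two-sided Gaussian tail estimate (Mill's ratio): for $x>0$,
\[
\left(\frac{1}{x}-\frac{1}{x^3}\right)\frac{e^{-x^2/2}}{\sqrt{2\pi}}\le \mathbb P\{\xi_1>x\}\le \frac{1}{x}\,\frac{e^{-x^2/2}}{\sqrt{2\pi}}.
\]

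For the upper bound, fix $\varepsilon>0$ and set $a_n=(1+\varepsilon)\sqrt{2\ln n}$. The right-hand tail bound gives $\mathbb P\{\xi_n>a_n\}$ of asymptotic order $n^{-(1+\varepsilon)^2}/\sqrt{\ln n}$, so $\sum_n \mathbb P\{\xi_n>a_n\}<\infty$ because $(1+\varepsilon)^2>1$. The first Borel--Cantelli lemma then yields $\mathbb P\{\xi_n>a_n \text{ i.o.}\}=0$, whence $\limsup_n \xi_n/\sqrt{2\ln n}\le 1+\varepsilon$ a.s. This half does not require independence.

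For the lower bound, fix $\varepsilon\in(0,1)$ and set $b_n=(1-\varepsilon)\sqrt{2\ln n}$. The left-hand tail bound gives $\mathbb P\{\xi_n>b_n\}$ of asymptotic order $n^{-(1-\varepsilon)^2}/\sqrt{\ln n}$, so $\sum_n \mathbb P\{\xi_n>b_n\}=\infty$ since $(1-\varepsilon)^2<1$ (the logarithmic factor cannot rescue the divergence, as $n^{1-(1-\varepsilon)^2}/\sqrt{\ln n}\to\infty$). Because the $\xi_n$ are independent, the events $\{\xi_n>b_n\}$ are independent, and the second Borel--Cantelli lemma yields $\mathbb P\{\xi_n>b_n \text{ i.o.}\}=1$, so $\limsup_n \xi_n/\sqrt{2\ln n}\ge 1-\varepsilon$ a.s.

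Finally, I would combine the two halves by intersecting over a countable sequence $\varepsilon_k\downarrow 0$: on an event of full probability, $1-\varepsilon_k\le \limsup_n \xi_n/\sqrt{2\ln n}\le 1+\varepsilon_k$ for every $k$, which forces the limsup to equal $1$ a.s. The main obstacle is extracting the tail estimates with enough precision to pin down the exponent on both sides simultaneously; the lower bound is the more delicate half, since it relies both on retaining the subtracted $1/x^3$ term in Mill's ratio to keep the divergent lower estimate and on the independence hypothesis, without which the second Borel--Cantelli lemma, and hence the conclusion, may fail.
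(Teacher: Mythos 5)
Your proof is correct: the two-sided Gaussian tail estimate gives summability of $\sum_n\mathbb P\{\xi_n>(1+\varepsilon)\sqrt{2\ln n}\}$ and divergence of $\sum_n\mathbb P\{\xi_n>(1-\varepsilon)\sqrt{2\ln n}\}$, the first and second Borel--Cantelli lemmas (the latter legitimately invoking independence) give the two one-sided bounds, and intersecting over a countable sequence $\varepsilon_k\downarrow 0$ finishes the argument. For comparison purposes, note that the paper does not prove this lemma at all: it is quoted directly from Shiryaev~\cite[Ch.~4.4, Ex.~1]{Shiryaev96}, and your argument is exactly the standard solution of that exercise, so there is no competing approach in the paper to contrast with yours.
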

We will use the following notation throughout the article:
\begin{definition}
Denote, for $k\in \mathbb N$, 
\begin{equation}
\label{def:eklnk}
\begin{split}
&e_{[k]}^a=\exp\{\exp\{\dots \{a\underbrace{\}\dots\}}_{k \,\,  times}\quad \mbox{for each} \quad  a\in \mathbb R, \quad e_{[0]}^a=1;\\
 &\ln_k b= \ln[ \ln [\dots[\ln b\underbrace{] \dots] ]}_{k \,\,  times} \quad \mbox{for each} \quad  b\ge  e_{[k]}^1, \quad \ln _0 b=b.
\end{split}
\end{equation}
\end{definition}
\begin{corollary}
\label{cor:lnk}
For all $n,k\in\mathbb{N}$,
\begin{multline}
\label{est:lnk1}
\sum_{i=j}^{n} \frac 1{(i+1)\ln (i+1)\dots \ln_k \left(i+e_{[k]}^1\right)}\\>\int_j^{n+1} \frac {dy}{(y+e_{[k]}^1)\ln (y+e_{[k]}^1)\dots \ln_k \left(y+e_{[k]}^1\right)}\\
=\ln_{k+1} (n+1+e_{[k]}^1)-\ln_{k+1} (j+e_{[k]}^1),
\end{multline}

and 
\begin{multline}
\label{est:lnk2}
\sum_{i=1}^{n+1}\frac 1{\left(i+e_{[k]}^1\right)\ln \left(i+e_{[k]}^1\right)\dots \ln_k \left(i+e_{[k]}^1\right)}\\<\int_0^{n+2} \frac {dy}{(y+e_{[k]}^1)\ln (y+e_{[k]}^1)\dots \ln_k \left(y+e_{[k]}^1\right)}\\
=\ln_{k+1} (n+2+e_{[k]}^1)-\ln_{k+1} (e_{[k]}^1)=\ln_{k+1} (n+2+e_{[k]}^1).
\end{multline}
\end{corollary}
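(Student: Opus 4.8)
The plan is to deduce both estimates from a single application of the integral comparison in Lemma~\ref{lem:monf} to the function
\[
f(y)=\frac{1}{\prod_{j=0}^{k}\ln_j\left(y+e_{[k]}^1\right)},\qquad y\ge 0,
\]
after evaluating the associated integral in closed form. First I would verify that $f$ meets the hypotheses of Lemma~\ref{lem:monf}. For $y\ge 0$ we have $y+e_{[k]}^1\ge e_{[k]}^1$, which is exactly the threshold at which $\ln_k$ remains defined, so each factor $\ln_j\left(y+e_{[k]}^1\right)$, $0\le j\le k$, is well-defined, continuous and strictly positive (indeed $\ln_k\left(e_{[k]}^1\right)=1$). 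Each such factor is increasing in $y$, hence their product is increasing and $f$ is positive, continuous and strictly decreasing on $[0,\infty)$.

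The analytic heart of the argument is the antiderivative identity
\[
\frac{d}{dy}\,\ln_{k+1}\left(y+e_{[k]}^1\right)=\frac{1}{\prod_{j=0}^{k}\ln_j\left(y+e_{[k]}^1\right)}=f(y),
\]
which I would prove by induction on $k$ via the chain rule: writing $\ln_{k+1}u=\ln\left(\ln_k u\right)$ with $u=y+e_{[k]}^1$, one differentiation peels off a factor $1/\ln_k u$ and reduces the claim to the derivative of $\ln_k u$. It follows that $\int_a^b f(y)\,dy=\ln_{k+1}\left(b+e_{[k]}^1\right)-\ln_{k+1}\left(a+e_{[k]}^1\right)$. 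Taking $a=0$ produces the cancellation recorded in \eqref{est:lnk2}, since $\ln_k\left(e_{[k]}^1\right)=1$ gives $\ln_{k+1}\left(e_{[k]}^1\right)=\ln 1=0$.

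The sums are then squeezed against this integral using the two inequalities of Lemma~\ref{lem:monf}. For \eqref{est:lnk2} the summand is exactly $f(i)$, so the first inequality of Lemma~\ref{lem:monf}, applied with $n$ replaced by $n+1$, gives $\sum_{i=1}^{n+1}f(i)<\int_0^{n+2}f(y)\,dy$, which is the claim after evaluating the integral. For \eqref{est:lnk1} I would first note that the summand dominates $f(i)$: the leading arguments $i+1$ satisfy $i+1\le i+e_{[k]}^1$ because $e_{[k]}^1\ge 1$, so the factors built from $i+1$ are no larger than those built from $i+e_{[k]}^1$, and the reciprocal is therefore at least $f(i)$. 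To accommodate the general lower index $j$ I would apply the monotone bound termwise, $f(i)>\int_i^{i+1}f(y)\,dy$ for decreasing $f$ (equivalently, Lemma~\ref{lem:monf} applied to the shift $y\mapsto f(y+j-1)$), and sum over $j\le i\le n$ to obtain $\sum_{i=j}^{n}f(i)>\int_j^{n+1}f(y)\,dy=\ln_{k+1}\left(n+1+e_{[k]}^1\right)-\ln_{k+1}\left(j+e_{[k]}^1\right)$.

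The one genuinely delicate point is the antiderivative identity, encompassing both the inductive chain-rule computation and the bookkeeping that keeps every nested logarithm inside its domain of positivity over the entire interval of integration; once that is in place, the remaining steps are the routine monotone integral comparison of Lemma~\ref{lem:monf} together with the elementary termwise domination needed for \eqref{est:lnk1}.
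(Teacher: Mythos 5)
Your proposal is correct and takes essentially the same approach as the paper: the paper's entire proof is the one-line observation that applying Lemma~\ref{lem:monf} to the decreasing, continuous integrand yields both estimates. You have simply made explicit the details the paper leaves implicit, namely the chain-rule antiderivative identity for $\ln_{k+1}\left(y+e_{[k]}^1\right)$, the vanishing of $\ln_{k+1}\left(e_{[k]}^1\right)$, and the shift argument needed to handle the general lower summation index $j$.
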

\begin{proof}
Applying Lemma \ref{lem:monf} to the decreasing, continuous function  $$f(x)=\frac 1{(x+1)\ln (x+1)\dots \ln_k \left(x+e_{[k]}^1\right)}$$ yields the result.
\end{proof}

\section{The unperturbed deterministic cubic equation}
\label{sec:homdet}
Consider
\begin{equation}
\label{eq:main3hom}
x_{n+1}=x_n(1-h_nx_n^2), \quad x_0\in \mathbb R, \quad n\in \mathbb N.
\end{equation}
Everywhere in this paper we assume that $(h_n)_{n\in\mathbb N}$ is a non-increasing sequence of positive numbers.
We derive an estimate on each $|x_n|$ and present a time-step sequence $(h_n)_{n\in\mathbb{N}}$ which provides convergence of the solution for any initial value $x_0\in \mathbb R$.

\subsection{Preliminary lemmata on solutions of Eq. \eqref{eq:main3hom}}

\begin{lemma}
\label{lem:1}
Let $x_n$ be a solution to equation \eqref{eq:main3hom}. Assume that \begin{equation}
\label{ass:1}
\text{ there exists  $N\in \mathbb N$ such that $h_Nx_N^2<2$.}
\end{equation}
Then,
\begin{enumerate}
\item [(a)]  the sequence $ (|x_n| )_{n\in \mathbb N}$ is non-increasing and $h_nx_n^2<2$ for each $n\ge N$;
\item[(b)]   the sequence $ (|x_n| )_{n\in \mathbb N}$ converges to a finite limit.
\end{enumerate}
\end{lemma}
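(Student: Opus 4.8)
The plan is to establish (a) by induction on $n \ge N$, and then deduce (b) from the monotone convergence theorem for real sequences. The starting point is the elementary identity
\[
|x_{n+1}| = |x_n|\,\bigl|1 - h_n x_n^2\bigr|,
\]
together with the observation that for a nonnegative quantity $t = h_n x_n^2 \ge 0$ one has $|1 - t| \le 1$ precisely when $t \le 2$. Hence whenever $h_n x_n^2 \le 2$ we immediately obtain $|x_{n+1}| \le |x_n|$.

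For the induction I take as hypothesis that $h_n x_n^2 < 2$ for a given $n \ge N$; the base case $n = N$ is exactly assumption \eqref{ass:1}. The hypothesis gives $|1 - h_n x_n^2| \le 1$ and therefore $|x_{n+1}| \le |x_n|$, which is the monotonicity step. To close the induction I must verify $h_{n+1} x_{n+1}^2 < 2$, and here the standing assumption that $(h_n)_{n\in\mathbb N}$ is non-increasing does the work: combining $h_{n+1} \le h_n$ with $x_{n+1}^2 \le x_n^2$ (just obtained) yields
\[
h_{n+1} x_{n+1}^2 \le h_n x_n^2 < 2.
\]
This simultaneously propagates both the bound $h_n x_n^2 < 2$ and the inequality $|x_{n+1}| \le |x_n|$ for all $n \ge N$, establishing part (a).

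Part (b) then follows quickly: the tail $(|x_n|)_{n \ge N}$ is non-increasing and bounded below by $0$, hence converges to a finite nonnegative limit; since only finitely many terms precede the index $N$, the full sequence $(|x_n|)_{n\in\mathbb N}$ converges to the same limit.

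The argument is essentially routine, and the only point requiring genuine care is the coupling in the inductive step: monotonicity of $(|x_n|)$ and monotonicity of the coefficient sequence $(h_n)$ must be used \emph{together} to keep $h_n x_n^2$ below the threshold $2$; neither alone suffices. I would also note that the degenerate case $x_N = 0$ (whence $x_n \equiv 0$ for $n \ge N$) is covered trivially, and that strictness in \eqref{ass:1} is needed only to seed the induction, the conclusions themselves relying on the weak inequality $|1 - h_n x_n^2| \le 1$.
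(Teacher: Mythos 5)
Your proof is correct and follows essentially the same route as the paper's: induction propagating $h_{n+1}x_{n+1}^2 \le h_n x_n^2 < 2$ using monotonicity of $(h_n)$ together with $|x_{n+1}| \le |x_n|$, then monotone bounded convergence for part (b). Your use of the weak inequality $|1-t|\le 1$ for $t\le 2$ is in fact slightly more careful than the paper's strict version, since it covers the degenerate case $x_N=0$ without comment.
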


\begin{proof}
\emph{(a)}  Since $h_Nx_N^2<2$ implies that $1-h_Nx_N^2 \in (-1,1)$ we have 
\begin{equation}
\label{rel:NN+2}
|x_{N+1}|=|x_N||1-h_Nx^2_N|<|x_N|.
\end{equation}
Since $(h_n)_{n\in \mathbb N}$ is a non-increasing sequence,  we have  $h_N\ge h_{N+1}$ and  
\[
h_{N+1}x^2_{N+1}<h_Nx^2_N<2.
\]
The remainder of the proof of (a) follows by induction. To prove (b) we note that  the sequence $(|x_n|)_{n\in\mathbb{N}}$ is non-increasing and bounded below by $0$, and therefore it converges to a finite limit.
 \end{proof}

\begin{lemma}
\label{lem:2}
Let $(x_n)_{n\in\mathbb{N}}$ be a solution to equation \eqref{eq:main3hom}. Assume  that there exist $N\in \mathbb N$ such that
\begin{equation}
\label{ass:2}
2>h_Nx_N^2>1.
\end{equation}
Then there exists $N_1 > N$ such that $h_{N_1}x_{N_1}^2 \leq 1$.
\end{lemma}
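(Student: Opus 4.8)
The plan is to reduce the two-dimensional dynamics to a single scalar quantity. Set $y_n := h_nx_n^2$; the hypothesis \eqref{ass:2} reads $y_N \in (1,2)$, and the conclusion is that $y_{N_1}\le 1$ for some $N_1 > N$. The crucial observation is that the monotonicity $h_{n+1}\le h_n$ lets one bound $y_{n+1}$ by a function of $y_n$ alone. Indeed, squaring \eqref{eq:main3hom} gives $x_{n+1}^2 = x_n^2(1-h_nx_n^2)^2$, so multiplying by $h_{n+1}$ and using $h_{n+1}x_n^2 \le h_nx_n^2 = y_n$ (valid since $x_n^2\ge 0$) yields
\[
y_{n+1} = h_{n+1}x_n^2(1-y_n)^2 \le y_n(1-y_n)^2 =: g(y_n).
\]

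First I would analyse $g$ on $(1,2)$. There $(1-y)^2\in(0,1)$, so $0 < g(y) < y < 2$; in particular, whenever $y_n \in (1,2)$ the displayed inequality forces $y_{n+1} \in (0,2)$ with $y_{n+1} < y_n$ strictly. I then argue by contradiction: suppose $y_n > 1$ for every $n \ge N$. By the observation just made and induction, $y_n \in (1,2)$ for all $n \ge N$, so $(y_n)_{n\ge N}$ is strictly decreasing and bounded below by $1$, hence convergent to some $L \in [1,2)$.

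The contradiction comes from passing to the limit. By continuity of $g$, the inequality $y_{n+1}\le g(y_n)$ gives $L \le g(L) = L(1-L)^2$; dividing by $L \ge 1 > 0$ leaves $(L-1)^2 \ge 1$, hence $L \ge 2$. This contradicts $L \le y_{N+1} < y_N < 2$. Therefore the set $\{n \ge N : y_n \le 1\}$ is nonempty, and since $y_N > 1$ its least element $N_1$ satisfies $N_1 > N$ and $h_{N_1}x_{N_1}^2 = y_{N_1} \le 1$, as required.

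I expect the only delicate points to be the correct derivation of the recursion inequality — in particular that enlarging $h_{n+1}$ to $h_n$ preserves the inequality precisely because $x_n^2 \ge 0$ — and the care needed in passing from the inequality $y_{n+1}\le g(y_n)$, rather than an equality, to the limit relation $L \le g(L)$. The remainder is a standard monotone-convergence argument.
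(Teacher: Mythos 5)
Your proof is correct, but it takes a genuinely different route from the paper's. The paper first invokes Lemma \ref{lem:1} (applicable because \eqref{ass:2} gives $h_Nx_N^2<2$) to get that $(|x_n|)_{n\in\mathbb N}$ is non-increasing with $x_n^2\to L^2$, and then argues by contradiction: if $h_nx_n^2>1$ for all $n\ge N$, then necessarily $L\neq 0$ and $h_n\to K\neq 0$, so $h_nx_n^2\to L^2K\in[1,2)$, and two cases are treated separately --- if $L^2K\in(1,2)$ then $|1-h_nx_n^2|$ is eventually bounded by some $\delta\in(0,1)$, giving the contraction $|x_{n+1}|<\delta|x_n|$ and hence $L=0$ in the limit; if $L^2K=1$ then $\lim|x_{n+1}|=\lim|x_n|\cdot\lim|1-h_nx_n^2|=0$ --- each contradicting $L\neq 0$. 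You instead collapse everything onto the scalar quantity $y_n=h_nx_n^2$, use the monotonicity of $(h_n)_{n\in\mathbb N}$ exactly once to obtain the closed recursion inequality $y_{n+1}\le y_n(1-y_n)^2$, and extract the contradiction from the single fixed-point inequality $L\le L(1-L)^2$, which forces $L\ge 2$ against $L<2$. Your argument is self-contained (it needs neither Lemma \ref{lem:1} nor the separate convergence of $h_n$ and $x_n^2$) and avoids the case split, at the cost of working with an inequality rather than an equality when passing to the limit --- a step you handle correctly, since non-strict inequalities survive limits. The paper's version buys consistency with the rest of the section: it reuses the already-established Lemma \ref{lem:1}, and it keeps the factor $|1-h_nx_n^2|$ explicitly in view, which is the quantity driving the later convergence arguments (Lemmata \ref{lem:finite} and \ref{lem:infinite}).
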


\begin{proof}
By Lemma \ref{lem:1}, the sequence $(|x_n|)_{n\in\mathbb{N}}$ is non-increasing.  Furthermore, Lemma \ref{lem:1} part (b) implies that, for some $L\in\mathbb R$,
\begin{equation}
\label{lim:L^2}
\lim_{n\to \infty} x_n^2=L^2.
\end{equation}

Proceed by contradiction and assume that $h_nx^2_n >1$ for all $n\ge  N$.  If either $L=0$ or $\lim_{n\to \infty}h_n=0$, it follows that $\lim_{n\to\infty}h_nx^2_n=0$.  So $L\neq 0$ and  $\lim_{n\to \infty}h_n=K\neq 0$. 
Since  $h_nx_n^2$ is not increasing and by \eqref{ass:2} we have 
\[
1\le L^2K<h_nx^2_n<2.
\]
So it is only possible that either 
\begin{enumerate}[(i)]
\item $2>L^2K>1$ or 
\item $L^2K=1$.
\end{enumerate}
For case (i), $1-L^2K \in (-1,0)$.  Since $\lim_{n\to \infty}h_nx^2_n= L^2K$, there exists $\delta\in (0, 1)$ and $N_1\in \mathbb N$ such that
$|1- h_nx^2|<\delta$, for all $n\ge N_1$, implying
\begin{equation}
\label{ineq:d}
 |x_{n+1}|<\delta |x_n|,\quad n\geq N_1.
 \end{equation}

Passing to the limit of both sides of \eqref{ineq:d} as $n\to\infty$, we get $L < \delta L$. Since $\delta\in(0,1)$, case (i) leads to a contradiction.

For case (ii), we have 
\[
\lim_{n \to \infty}|x_{n+1}|=\lim_{n \to \infty}|x_n|\lim_{n \to \infty}|1-h_nx_n^2| =0,
\]
which implies that $\lim_{n \to \infty} |x_n| = 0$.  Hence, case (ii) also leads to a contradiction. This completes the proof.
\end{proof}

\begin{lemma}
\label{lem:N}
Let $(x_n)_{n\in\mathbb{N}}$ be a solution to \eqref{eq:main3hom} with arbitrary initial condition $x_0 \neq 0$.  If
\begin{equation}
\label{ineq:<}
\text{ there exists  $N\in \mathbb N$ such that $h_Nx_N^2<1$},
\end{equation}
then
\begin{enumerate}
\item [(a)] terms of the sequence $(x_n)_{n\geq N}$ do not change sign;
\item  [(b)]  the sequence $(x_n)_{n\in \mathbb N}$ converges to a finite limit.
\end{enumerate}
\end{lemma}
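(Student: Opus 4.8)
The plan is to first establish the uniform bound $h_n x_n^2 < 1$ for all $n \geq N$, and then to read off both conclusions from it together with Lemma \ref{lem:1}. Since the hypothesis \eqref{ineq:<} gives $h_N x_N^2 < 1 < 2$, Lemma \ref{lem:1} applies and tells us that $(|x_n|)_{n \geq N}$ is non-increasing and converges to some finite limit $L \geq 0$. The first key observation I would record is that the product $h_n x_n^2$ is itself non-increasing for $n \geq N$: indeed $(h_n)_{n\in\mathbb{N}}$ is non-increasing by our standing assumption and $(x_n^2)_{n \geq N}$ is non-increasing by Lemma \ref{lem:1}(a), so $h_{n+1} x_{n+1}^2 \leq h_n x_n^2$. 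Hence $h_n x_n^2 \leq h_N x_N^2 < 1$ for every $n \geq N$, and therefore $1 - h_n x_n^2 \in (0,1)$ throughout the tail $n \geq N$.

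For part (a) I would argue by induction on $n \geq N$. If $x_N = 0$, the recursion forces $x_n = 0$ for all $n \geq N$ and there is nothing to prove. If $x_N \neq 0$, then since $x_{n+1} = x_n(1 - h_n x_n^2)$ and the factor $1 - h_n x_n^2$ is strictly positive, $x_{n+1}$ is nonzero and carries the same sign as $x_n$; induction then shows that every $x_n$, $n \geq N$, shares the sign of $x_N$.

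For part (b) I would combine the two facts just obtained. By part (a) the sign $s \in \{-1, 0, +1\}$ of $x_n$ is constant for $n \geq N$, so $x_n = s\,|x_n|$ on that tail; since $|x_n| \to L$ by Lemma \ref{lem:1}(b), we conclude $x_n \to sL$. Because convergence of a tail is equivalent to convergence of the whole sequence, $(x_n)_{n \in \mathbb{N}}$ converges to the finite limit $sL$.

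The argument is essentially routine once the monotonicity of $h_n x_n^2$ is in hand; the only point requiring a little care is the degenerate possibility $x_N = 0$ (equivalently, that some earlier factor $1 - h_n x_n^2$ vanished), which must be separated out in the sign argument but is handled trivially. The main substantive step, and the one I would want to state explicitly, is the uniform bound $h_n x_n^2 < 1$ for $n \geq N$, since it is what converts the \emph{local} hypothesis at a single index $N$ into the \emph{no sign change} behaviour on the entire tail.
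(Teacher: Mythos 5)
Your proposal is correct and follows essentially the same route as the paper: invoke Lemma \ref{lem:1} to get that $(|x_n|)_{n\geq N}$ is non-increasing and convergent, use the monotonicity of $h_n x_n^2$ to conclude $1-h_nx_n^2\in(0,1)$ on the whole tail, and deduce that the sign is eventually constant so that $x_n = s|x_n|$ converges. Your treatment is in fact slightly more careful than the paper's, which asserts $x_nx_{n+1}>0$ without separating out the degenerate case $x_N=0$ that you handle explicitly.
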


\begin{proof}
(a) Since \eqref{ineq:<} implies \eqref{ass:1}, we conclude that the sequence $(|x_n|)_{n\in \mathbb N}$ is non-increasing and therefore convergent, $1-h_nx_n^2\in (0, 1)$ for all $n\ge N$ and then  $x_n x_{n+1}>0$ for all $n\ge N$. So  the sign of $x_n$ stops changing for $n\ge N$, which implies that the sequence $(x_n)_{n\in \mathbb N}$ converges to a finite limit. 
 \end{proof}
 
\begin{remark}
\label{rem:<2}
From Lemma \ref{lem:2} we conclude  that condition \eqref{ass:1} implies \eqref{ineq:<}. So without loss of generality we refer to \eqref{ineq:<} instead of \eqref{ass:1} for the remainder of the article.
\end{remark}

\begin{remark}
\label{rem:=1}
In  the case where $h_N x_N^2 = 1$ for some $N \in \mathbb N$,  we have $x_n=0$ for all $n>N$,  ensuring that $\lim_{n\to \infty} x_n = 0$. In  the case when $h_N x_N^2 = 2$ for some $N \in \mathbb N$, we have 
\[
x_{N+1}=x_N(1-h_Nx^2_N)= -x_N, 
\] 
which implies that $ x_{N+k}=(-1)^k x_N$. In this case $\lim_{n\to \infty} |x_n| = |x_N|$ but $\lim_{n\to \infty} x_n$ does not exist.
\end{remark}

\subsection{Timestep summability and the limit of solutions}
\label{subsec:con}

In this section we show that if \eqref{ineq:<} holds, then solutions converge to a nonzero limit if the stepsize sequence is summable. If not, solutions converge asymptotically to zero.
\begin{lemma}
\label{lem:finite}
Let $(x_n)_{n\in\mathbb{N}}$ be a solution of \eqref{eq:main3hom} with initial condition $x_0 \neq 0$.  Suppose that \eqref{ineq:<} holds and that $\sum_{j=1}^{\infty}h_{j}=S < \infty$.  Then, $\lim_ {n \to \infty} x_n =L \neq 0$.
\end{lemma}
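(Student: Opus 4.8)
The plan is to represent the solution as a telescoping infinite product of the factors $1-h_jx_j^2$ and to show that this product converges to a nonzero limit via Lemma \ref{lem:product}. Since \eqref{ineq:<} holds, Lemma \ref{lem:N} guarantees that $(x_n)_{n\in\mathbb N}$ converges to a finite limit $L$, that the terms do not change sign for $n\ge N$, and (together with Lemma \ref{lem:1}) that $(|x_n|)_{n\ge N}$ is non-increasing. Because the map $x\mapsto x(1-h_nx^2)$ is odd, I may assume without loss of generality that the terms are nonnegative for $n\ge N$. If $x_N=0$ then $x_n\equiv 0$ for $n\ge N$, which is the degenerate situation described in Remark \ref{rem:=1}; so I take $x_N>0$. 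Since $h_nx_n^2<1$ in this regime, each factor $1-h_jx_j^2$ lies in $(0,1)$ and the iterates remain strictly positive, whence $L\ge 0$ and it only remains to exclude $L=0$.

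Next I would iterate the recursion from the index $N$ to obtain
\[
x_n = x_N\prod_{j=N}^{n-1}\left(1-h_jx_j^2\right), \qquad n>N,
\]
so that $L = x_N\prod_{j=N}^{\infty}\left(1-h_jx_j^2\right)$ as soon as the infinite product is shown to converge. Setting $q_j:=h_jx_j^2\in[0,1)$, Lemma \ref{lem:product} reduces the claim $L\neq 0$ to the convergence of the series $\sum_{j=N}^{\infty}h_jx_j^2$.

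The crucial observation is that $x_j^2$ is uniformly bounded along the tail: by the monotonicity from Lemma \ref{lem:1} we have $x_j^2\le x_N^2$ for all $j\ge N$. Combining this with the summability hypothesis $\sum_{j=1}^{\infty}h_j=S<\infty$ gives
\[
\sum_{j=N}^{\infty}h_jx_j^2 \le x_N^2\sum_{j=N}^{\infty}h_j \le x_N^2\,S < \infty.
\]
Hence $\sum_{j=N}^\infty q_j$ converges, so Lemma \ref{lem:product} yields $\prod_{j=N}^{\infty}\left(1-h_jx_j^2\right)=:P\in(0,1]$, and since $x_N\neq 0$ we conclude $L=x_NP\neq 0$.

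I expect the argument to be short, and the summability hypothesis enters at exactly one point: the boundedness $x_j^2\le x_N^2$ is what transfers summability from $(h_j)$ to $(h_jx_j^2)$. The only delicate preliminaries are the reduction to a sign-definite, zero-avoiding strictly positive tail (which must be secured before the product representation is meaningful) and the verification that $q_j\in[0,1)$, so that Lemma \ref{lem:product} is applicable; both follow directly from \eqref{ineq:<} and Lemma \ref{lem:N}.
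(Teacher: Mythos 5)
Your proof is correct, and it reaches the conclusion by a different final mechanism than the paper. Both arguments share the same skeleton: iterate the recursion from the index $N$ supplied by \eqref{ineq:<} to get the product representation $x_{N+k}=x_N\prod_{i=0}^{k-1}(1-h_{N+i}x_{N+i}^2)$, and then use the monotonicity $x_j^2\le x_N^2$ from Lemma \ref{lem:1} to transfer summability from $(h_j)$ to $(h_jx_j^2)$, so that $\sum_{j\ge N}h_jx_j^2\le x_N^2S<\infty$. Where you diverge is in how the nonvanishing of the limit is extracted from this: you invoke Lemma \ref{lem:product} (stated in the paper's preliminaries but, interestingly, not used in the paper's own proof of this lemma), which immediately gives $\prod_{j\ge N}(1-h_jx_j^2)=P>0$ and hence $L=x_NP\neq 0$. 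The paper instead takes logarithms and applies the elementary estimate $\ln(1-x)>-2x$ from Lemma \ref{lem:logest}(ii); this forces an extra preparatory step (using $h_j\to 0$ to pass to an index where $h_nx_n^2<\tfrac12$, so the estimate applies), but it pays off with an explicit quantitative bound $|L|\ge |x_N|e^{-2x_N^2S}>0$, which your qualitative route does not provide. Your reduction to a positive tail via oddness of $x\mapsto x(1-h_nx^2)$ replaces the paper's symmetric treatment of the cases $x_N>0$ and $x_N<0$; and your explicit flagging of the degenerate possibility $x_N=0$ (deferring to Remark \ref{rem:=1}) is if anything slightly more careful than the paper, which silently assumes $x_N\neq 0$. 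Both approaches are sound; yours is shorter and leans on the standard infinite-product criterion, the paper's is self-contained at that point and yields an effective lower bound on $|L|$.
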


\begin{proof}
 Since \eqref{ineq:<} holds for some $N \in \mathbb N$, by Lemmata \ref{lem:1} and  \ref{lem:N} we have, for all $k \in \mathbb N$,
\begin{equation}
\label{ineq:k}
x^2_{N+k}<  x_N^2, \quad 1-h_{N+i} x_{N+i}^2>0.
\end{equation}
Then, for all $k \in \mathbb N$,
\begin{equation*}
%\label{cond:in1}
\begin{split}
x_{N+k} =& x_{N+k-1}(1-h_{N+k-1} x_{N+k-1}^2) \\&= x_{N+k-2}(1-h_{N+k-2} x_{N+k-2}^2)(1-h_{N+k-1} x_{N+k-1}^2)\\&
= x_N \prod_{i=0}^{k-1} \left(1-h_{N+i} x_{N+i}^2 \right).   
\end{split}
\end{equation*}
This implies 
\begin{equation}
\label{eq:2.1}
x_{N+k} = x_N e^{\sum_{i=0}^{k-1} \ln\left(1-h_{N+i} x_{N+i}^2 \right)}.
\end{equation}
By Lemma \ref{lem:1}, part (a),
\[
\sum_{i=0}^{k-1} h_{N+i} x_{N+i}^2 < x_N^2 \sum_{i=0}^{k-1}h_{N+i} < x_N^2 S.
\]
By  Lemma \ref{lem:N}, part (b),  for some $L\in \mathbb R$ we have $\lim_ {n \to \infty} x_n = L$. Also, $\lim_{j \to \infty} h_j = 0$, since $\sum_{j=1}^{\infty}h_{j} < \infty$.  So there exists $N_1 \in \mathbb N$  such that $h_{n}x^2_{n} < \frac{1}{2}$ for all $n\ge N_1$. Without loss of generality we may therefore suppose that $N_1=N$. Part (ii) of Lemma \ref{lem:logest} applies, and so for all $i \in \mathbb N$,
\begin{equation}
\label{est>}
\ln\left(1-h_{N+i} x_{N+i}^2 \right) > -2h_{N+i} x_{N+i}^2.
\end{equation}
Let $x_N >0$.  By applying \eqref{est>} to \eqref{eq:2.1}, and by \eqref{ineq:k}, we have
\[
x_{N+k}  > x_N e^{-2\sum_{i=0}^{k-1} h_{N+i} x_{N+i}^2}\ge  x_N e^{-2x_N^2\sum_{i=0}^{k-1} h_{N+i}} > x_N e^{-2x_N^2S}>0.
\]
Passing to the limit for $k\to \infty$ in above inequality we get 
\[
L=\lim_{n \to \infty}x_{n} > x_N e^{-2x_N^2S}>0.
\]
Similarly, for $x_N <0$, we have
\[
x_{N+k}  < x_N e^{-2\sum_{i=0}^{k-1} h_{N+i} x_{N+i}^2}\le  x_N e^{-2x_N^2\sum_{i=0}^{k-1} h_{N+i}} < x_N e^{-2x_N^2S}<0.
\]
In both cases $\lim_{n \to \infty} x_{n} \neq 0$, proving the statement of the Lemma.
\end{proof}

\begin{lemma}
\label{lem:infinite}
Let $(x_n)_{n\in\mathbb{N}}$ be a solution to  \eqref{eq:main3hom}  with the  initial value $x_0 \neq 0$.  Suppose that \eqref{ineq:<} holds and that $\sum_{j=1}^{\infty}h_{j} = \infty$ . Then $\lim_ {n \to \infty} x_n = 0$.
\end{lemma}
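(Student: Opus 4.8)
The plan is to follow the proof of Lemma \ref{lem:finite} up to the product representation, and then exploit the \emph{divergence} of $\sum h_j$ together with Lemma \ref{lem:product} to force the infinite product to vanish. Since \eqref{ineq:<} holds, Lemmata \ref{lem:1} and \ref{lem:N} guarantee that $(|x_n|)_{n\ge N}$ is non-increasing, that the signs of $x_n$ are eventually constant, and that $x_n\to L$ for some finite $L$; it therefore remains only to show that $L=0$. Because $(h_n)$ and $(|x_n|)_{n\ge N}$ are both non-increasing, the product $h_nx_n^2$ is non-increasing for $n\ge N$, so $q_n:=h_nx_n^2\le h_Nx_N^2<1$ and hence $q_n\in[0,1)$ for every $n\ge N$. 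As in \eqref{eq:2.1}, iterating \eqref{eq:main3hom} gives, for each $k\in\mathbb N$,
\[
x_{N+k}=x_N\prod_{i=0}^{k-1}\bigl(1-h_{N+i}x_{N+i}^2\bigr).
\]

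First I would argue by contradiction, assuming $L\neq0$. Since $(|x_n|)_{n\ge N}$ decreases to $|L|$, we have $x_n^2\ge L^2>0$ for all $n\ge N$, and therefore
\[
\sum_{i=0}^{\infty}q_{N+i}=\sum_{i=0}^{\infty}h_{N+i}x_{N+i}^2\ge L^2\sum_{i=0}^{\infty}h_{N+i}=\infty,
\]
where the divergence uses the hypothesis $\sum_{j=1}^\infty h_j=\infty$. With $q_{N+i}\in[0,1)$ and $\sum_i q_{N+i}=\infty$, Lemma \ref{lem:product} (in its divergent direction) yields that $\prod_{i=0}^{k-1}(1-q_{N+i})\to0$ as $k\to\infty$.

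Passing to the limit in the product representation then gives $L=\lim_{k\to\infty}x_{N+k}=x_N\cdot0=0$, contradicting $L\neq0$. Hence $L=0$, that is, $\lim_{n\to\infty}x_n=0$. The only point requiring care is the verification that $q_n\in[0,1)$ for all $n\ge N$, so that Lemma \ref{lem:product} applies and the factors $1-h_{N+i}x_{N+i}^2$ remain positive; this is exactly where the monotonicity of both $(h_n)$ and $(|x_n|)_{n\ge N}$, inherited from \eqref{ineq:<} via Lemma \ref{lem:1}, is used. I expect this bookkeeping, rather than any deep estimate, to be the only mild obstacle, the remainder being a direct invocation of the stated product lemma.
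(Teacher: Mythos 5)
Your proof is correct, and it follows the paper's skeleton — the product representation $x_{N+k}=x_N\prod_{i=0}^{k-1}\bigl(1-h_{N+i}x_{N+i}^2\bigr)$, the contradiction hypothesis $L\neq 0$, the lower bound $x_{N+i}^2\ge L^2$, and the consequent divergence of $\sum_i h_{N+i}x_{N+i}^2$ — but it closes the argument with a different key lemma. The paper does not use Lemma \ref{lem:product} here: it takes logarithms, applies Lemma \ref{lem:logest}(i) (that is, $\ln(1-x)<-x$) to the exponential form \eqref{eq:2.1}, and obtains the explicit bound $|x_{N+k}|<|x_N|e^{-L^2\sum_{i=0}^{k-1}h_{N+i}}$, whose right-hand side tends to zero as $k\to\infty$, giving the contradiction. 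Your substitution of Lemma \ref{lem:product} (in its divergent direction) for this estimate is legitimate, with one small point worth making explicit: the lemma as stated says the product fails to converge to a \emph{nonzero} limit, and to conclude that the partial products actually tend to zero you should observe that they form a non-increasing sequence in $(0,1]$ and hence converge — which is immediate from your verification that $q_n=h_nx_n^2\in[0,1)$ for $n\ge N$ (itself the same monotonicity bookkeeping as the paper's \eqref{ineq:k}). What each route buys: yours is shorter and reuses a lemma already stated in Section \ref{sec:prelim}, at the cost of hiding the mechanism inside that lemma (whose standard proof is precisely the paper's logarithmic estimate); the paper's version is self-contained and yields a quantitative decay bound on $|x_{N+k}|$ in terms of the partial sums $\sum_{i=0}^{k-1}h_{N+i}$, not merely the qualitative conclusion $L=0$.
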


\begin{proof}
First, \eqref{ineq:<} implies  \eqref{ineq:k}.  So, by Lemma \ref{lem:logest} part (i), for each $k \in \mathbb N$,
\begin{equation}
\label{approx:est} 
\ln(1-h_{N+k} x_{N+k}^2) < -h_{N+k} x_{N+k}^2.
\end{equation}
Proceed by contradiction, and suppose that $\lim_{n \to \infty} x_n^2 =L^2$ for some $L>0$.  Since  the sequence $(|x_n|)_{n\in \mathbb N}$ is non-increasing, we have  $x_N^2>x_{N+i}^2 \geq L^2$.  Applying \eqref{eq:2.1} and \eqref{approx:est} we obtain
\begin{equation}
\label{est:2.1}
\begin{split}
|x_{N+k}| &= |x_N|e^{\sum_{i=0}^{k-1} \ln\left(1-h_{N+i} x_{N+i}^2 \right)} < |x_N| e^{\sum_{i=0}^{k-1} \left(-h_{N+i} x_{N+i}^2 \right)} \\&< |x_N| e^{-L^2\sum_{i=0}^{k-1} h_{N+i}}.
\end{split}
\end{equation}
Passing to the limit in \eqref{est:2.1} as $k\to \infty$, we arrive at 
\[
L^2<|x_N|  e^{-L^2\sum_{j=1}^{\infty}h_j}=0,
\]
yielding the desired contradiction.
\end{proof}

%%%%%%%%%%%%%%%%%%%%%%%%%%%%%%%%%%%%
\subsection{Estimation of $|x_n|$}
\label{subsec:est|xn|}

In this section we establish a useful estimate for each $|x_n|$ when there exists $\bar N\in \mathbb N$ such that 
\begin{equation}
\label{cond:in1}
\frac 1{h_nx_n^2}\in (0, 1),\text{ for all  }n\le \bar N.
\end{equation}

\begin{lemma}
\label{lem:estup}
If \eqref{cond:in1} holds for some $\bar N\in \mathbb N$,  then for all $n\le \bar N$
\begin{equation}
\label{est:main}
 |x_n| < |x_0|^{{3}^{n}}\prod_{i=0}^{n-1} h_{n-1-i}^{{3}^{i}},  \quad n \in \mathbb N.
\end{equation}
\end{lemma}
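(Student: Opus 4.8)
The plan is to prove \eqref{est:main} by induction on $n$, driven by a single elementary one-step estimate. First I would rewrite the hypothesis: condition \eqref{cond:in1} says precisely that $h_n x_n^2 > 1$ for every $n \le \bar N$, hence $1 - h_n x_n^2 < 0$ and $|1 - h_n x_n^2| = h_n x_n^2 - 1 < h_n x_n^2$. Feeding this into the recursion \eqref{eq:main3hom} gives, for all $n \le \bar N$,
\[
|x_{n+1}| = |x_n|\,|1 - h_n x_n^2| = |x_n|\bigl(h_n x_n^2 - 1\bigr) < |x_n|\, h_n x_n^2 = h_n |x_n|^3.
\]
This is the engine of the argument: at each admissible index it trades a factor $|x_n|$ for a cube $|x_n|^3$ at the cost of the coefficient $h_n$.

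Next I would run the induction. The base case $n=1$ follows directly from the one-step estimate, since the right-hand side of \eqref{est:main} at $n=1$ is exactly $|x_0|^{3}\,h_0 = h_0|x_0|^3$. For the inductive step, assuming the bound at level $n$ (with $n \le \bar N$, so that the one-step estimate applies at index $n$), I would cube the inductive hypothesis and insert it:
\[
|x_{n+1}| < h_n |x_n|^3 < h_n\left(|x_0|^{3^{n}} \prod_{i=0}^{n-1} h_{n-1-i}^{3^{i}}\right)^{3} = h_n\, |x_0|^{3^{n+1}} \prod_{i=0}^{n-1} h_{n-1-i}^{3^{i+1}}.
\]

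It remains to recognise the right-hand side as the claimed bound at level $n+1$, namely $|x_0|^{3^{n+1}}\prod_{i=0}^{n} h_{n-i}^{3^{i}}$, and this index bookkeeping is the only genuinely fiddly step. I would reindex the surviving product by setting $j = i+1$, so that $\prod_{i=0}^{n-1} h_{n-1-i}^{3^{i+1}} = \prod_{j=1}^{n} h_{n-j}^{3^{j}}$, and then absorb the leading coefficient as the missing $j=0$ term, $h_n = h_{n-0}^{3^{0}}$, to obtain $\prod_{j=0}^{n} h_{n-j}^{3^{j}}$, which is exactly $\prod_{i=0}^{n}h_{n-i}^{3^i}$. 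The anticipated obstacle is purely clerical: keeping the subscripts $n-1-i$ and the exponents $3^{i}$ aligned after cubing and multiplying by $h_n$. Care with the empty-product convention at the base and with the range of validity (the one-step estimate needs index $n \le \bar N$) completes the induction, establishing \eqref{est:main} for all $n \le \bar N$.
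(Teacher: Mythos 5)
Your proposal is correct and follows essentially the same route as the paper: the same one-step bound $|x_{n+1}| < h_n|x_n|^3$ derived from \eqref{cond:in1} (the paper factors $x_n(1-h_nx_n^2)=-h_nx_n^3\bigl(1-\tfrac{1}{h_nx_n^2}\bigr)$ and bounds the bracket by $1$, which is algebraically identical to your $|1-h_nx_n^2| < h_nx_n^2$), followed by the same induction with cubing and re-indexing of the product.
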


\begin{proof}
For $n=0$ we have, 
\[
x_1=x_0(1-h_0x_0^2)=-h_0x_0^3 \left(1-\frac 1{h_0x_0^2}  \right),
\]
which,  by \eqref{cond:in1}, implies  that
\[
|x_1|=\left|h_0x_0^3 \left(1-\frac 1{h_0x_0^2}  \right)   \right|= h_0|x_0|^3 \left|1-\frac 1{h_0x_0^2}\right|
< h_0|x_0|^3.
\]
So \eqref{est:main} holds for $n=1$. Assume that \eqref{est:main} holds  for some $k<\bar N$. By \eqref{cond:in1},  
$|x_{k+1}| < h_k|x_k|^3,$  which implies that 
\[
|x_{k+1}| < h_k |x_k|^3< h_k |x_0|^{3^{k+1}}\prod_{i=0}^{k-1} h_{k-1-i}^{{3}^{i+1}} = |x_0|^{3^{k+1}} \prod_{i=-1}^{k-1} h_{k-1-i}^{{3}^{i+1}},
\]
which demonstrates that \eqref{est:main} holds for $k+1$, and concludes the proof for all $n\leq \bar N$ by induction. 
\end{proof}

%%%%%%%%%%%

\begin{lemma}
\label{lem:he}
Let $(x_n)_{n\in\mathbb{N}}$ be a solution to \eqref{eq:main3hom} with arbitrary $x_0 \in \mathbb R$ and with $(h_n)_{n\in\mathbb{N}}$ satisfying the following condition
\begin{equation}
\label{cond:loginfty} 
\sum_{j=0}^{\infty} 3^{-j}\ln h^{-1}_{j}=\infty.
\end{equation}
Then there exists $\bar N=\bar N(x_0)$ such that \eqref{ineq:<} holds.
\end{lemma}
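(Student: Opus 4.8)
The plan is to argue by contradiction, assuming that \eqref{ineq:<} fails, i.e.\ that $h_nx_n^2\ge 1$ for every $n\in\mathbb N$, and to show this is incompatible with the divergence hypothesis \eqref{cond:loginfty}. First I would dispose of the degenerate cases: if $x_0=0$ then $h_0x_0^2=0<1$ and \eqref{ineq:<} holds at $N=0$, so we may assume $x_0\neq 0$. Next, the negated statement actually forces the \emph{strict} inequality $h_nx_n^2>1$ for all $n$: indeed, if $h_Nx_N^2=1$ for some $N$, then Remark \ref{rem:=1} gives $x_{N+1}=0$, whence $h_{N+1}x_{N+1}^2=0<1$, contradicting the negation. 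Consequently $\tfrac1{h_nx_n^2}\in(0,1)$ for every $n$, so condition \eqref{cond:in1} holds for all $\bar N$, and Lemma \ref{lem:estup} applies at every index.

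Applying estimate \eqref{est:main} and taking logarithms gives, for each $n$,
\[
\ln|x_n| < 3^n\ln|x_0| + \sum_{i=0}^{n-1}3^i\ln h_{n-1-i}.
\]
I would then re-index the sum via $j=n-1-i$ and use $\ln h_j=-\ln h_j^{-1}$ to rewrite the right-hand side as $3^n\ln|x_0|-3^{n-1}\sum_{j=0}^{n-1}3^{-j}\ln h_j^{-1}$, which exposes precisely the series appearing in \eqref{cond:loginfty}. On the other hand, the standing assumption $h_nx_n^2>1$ yields the elementary lower bound $|x_n|>h_n^{-1/2}$, that is $\ln|x_n|>\tfrac12\ln h_n^{-1}$.

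Combining the two bounds and dividing by $3^{n-1}$ produces an inequality of the form $\tfrac32\,3^{-n}\ln h_n^{-1}+\sum_{j=0}^{n-1}3^{-j}\ln h_j^{-1}<3\ln|x_0|$. Since $(h_n)_{n\in\mathbb N}$ is non-increasing and positive, the tail weight $3^{-n}\ln h_n^{-1}$ is eventually nonnegative, so for all large $n$ the left-hand side dominates the partial sums $\sum_{j=0}^{n-1}3^{-j}\ln h_j^{-1}$, which tend to $+\infty$ by \eqref{cond:loginfty}; this contradicts the fixed upper bound $3\ln|x_0|$ and completes the proof. I expect the main obstacle to be bookkeeping rather than conceptual: correctly re-indexing the product in \eqref{est:main} so that the weights $3^{-j}\ln h_j^{-1}$ emerge, and confirming the sign of the tail term $3^{-n}\ln h_n^{-1}$ so that discarding it is legitimate — here the monotonicity of $(h_n)_{n\in\mathbb N}$ guarantees $h_n<1$ for all large $n$, since otherwise \eqref{cond:loginfty} could not diverge to $+\infty$.
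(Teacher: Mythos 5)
Your proposal is correct and follows essentially the same route as the paper: argue by contradiction, invoke Lemma \ref{lem:estup}, take logarithms to expose the weighted series $\sum_j 3^{-j}\ln h_j^{-1}$, and contradict its divergence using the standing assumption $h_nx_n^2>1$ (the paper packages this as $h_{\bar N}|x_{\bar N}|^2<F(\bar N)<1$, which is the same inequality you obtain after dividing by $3^{n-1}$). Your explicit treatment of the boundary case $h_Nx_N^2=1$ and your justification that $h_n<1$ eventually are small refinements of steps the paper passes over with ``without loss of generality.''
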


\begin{proof} 
Suppose that  \eqref{ineq:<} fails to hold for any $\bar N$. Then  $1/{h_nx_n^2}\in (0, 1)$, for all $n\in\mathbb{N}$.  For an arbitrary $\bar N$, we can apply Lemma \ref{lem:estup}, making the change of variables 
\[
j=\bar N-1-i, \quad i=\bar N-1-j, \quad i=0, \dots, \bar N-1, \quad j=\bar N-1,  \dots, 0,
\]
to get
\begin{equation}
\label{ineq+1}
 |x_{\bar N}| <  |x_0|^{{3}^{\bar N}}\prod_{j=0}^{\bar N-1} h_{j}^{{3}^{\bar N-1-j}}.
\end{equation}
Set 
\begin{equation}
\label{est: barN}
F(\bar N) : = h_{\bar N} \left| |x_0|^{{3}^{\bar N}}\prod_{j=0}^{\bar N-1} h_{j}^{{3}^{\bar N-1-j}}\right|^2.
\end{equation}
Squaring both sides of \eqref{ineq+1} and multiplying throughout by $h_{\bar N}$, we obtain $ h_{\bar N} |x_{\bar N}|^2<F(\bar N)$.
Then
\begin{equation}
\label{log}
\begin{split}
\ln \left[F(\bar N)\right] &=\ln h_{\bar N} + 2 \cdot 3^{\bar N} \ln |x_0| +\sum_{j=0}^{\bar N-1} \ln h_{j}^{2 \cdot 3^{\bar N-1-j}}
\\&=  \ln h_{\bar N} + 2 \cdot 3^{\bar N} \ln |x_0| + \frac{2}{3} \cdot \sum_{j=0}^{\bar N -1} 3^{\bar N-j}\ln h_{j}.
%\\&=   2 \cdot 3^{\bar N} \ln |x_0| + \frac{2}{3} \cdot 3^{\bar N}\sum_{j=0}^{\bar N} 3^{-j}\ln h_{j}.
\end{split}
\end{equation}
Without loss of generality we can assume that $\ln h_{\bar N}<0$, so $\ln h_{\bar N}<\frac 23\ln h_{\bar N}$ and, continuing from \eqref{log},
\begin{equation}
\label{log1}
\begin{split}
\ln \left[F(\bar N)\right]&\le 2 \cdot 3^{\bar N} \ln |x_0| + \frac{2}{3} \cdot 3^{\bar N}\sum_{j=0}^{\bar N} 3^{-j}\ln h_{j}
\\&=   \frac 23 \cdot 3^{\bar N} \left[\ln |x_0|^3 + \sum_{j=0}^{\bar N} 3^{-j}\ln h_{j}\right].
\end{split}
\end{equation}
The expression in the square brackets is negative for any $x_0\in \mathbb R$ with $\bar N$ sufficiently large if condition \eqref{cond:loginfty} holds. In this case  for each $x_0\in \mathbb R$ we can find $\bar N=\bar N(x_0)$ s.t. 
\[
\sum_{j=0}^{\bar N} 3^{-j}\ln h^{-1}_{j}>\ln |x_0|^3.
\]
Then $F(\bar N)<1$ which means that $|x_{\bar N}|<1$  as well as $h_{\bar N}x_{\bar N}^2<1$. So  condition \eqref{ineq:<} holds for $\bar N=\bar N(x_0)$. Obtained contradiction proves the result.

\end{proof}

Lemmata \ref{lem:infinite} and \ref{lem:he} imply the following corollary. 

\begin{corollary}
\label{cor:convhom}
Let $(x_n)_{n\in\mathbb{N}}$ be a solution to \eqref{eq:main3hom} with arbitrary $x_0 \in \mathbb R$ and with $(h_n)_{n\in\mathbb{N}}$ satisfying condition
\eqref{cond:loginfty}. Then  $\lim_ {n \to \infty} x_n=0$.
\end{corollary}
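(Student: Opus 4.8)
The plan is to chain together the two preceding lemmata: Lemma~\ref{lem:he} delivers the ``capture'' condition \eqref{ineq:<}, and Lemma~\ref{lem:infinite} then forces the limit to be zero. Before invoking either, I would dispose of the trivial case: if $x_0=0$ then \eqref{eq:main3hom} gives $x_n=0$ for every $n$, so $\lim_{n\to\infty}x_n=0$ and there is nothing to prove. Hence I assume $x_0\neq 0$ throughout.

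Since $(h_n)_{n\in\mathbb N}$ satisfies \eqref{cond:loginfty}, Lemma~\ref{lem:he} applies verbatim and produces an index $\bar N=\bar N(x_0)$ for which \eqref{ineq:<} holds, i.e. $h_{\bar N}x_{\bar N}^2<1$. This is exactly the common hypothesis of Lemmata~\ref{lem:finite} and~\ref{lem:infinite}. By Remark~\ref{rem:<2} together with Lemmata~\ref{lem:1} and~\ref{lem:N}, once \eqref{ineq:<} holds the sequence $(|x_n|)_{n\in\mathbb N}$ is non-increasing, the terms $(x_n)_{n\ge\bar N}$ stop changing sign, and $(x_n)_{n\in\mathbb N}$ converges to some finite limit $L$. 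So the entire content of the corollary reduces to the single question of whether $L=0$.

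To pin down $L=0$ I would apply Lemma~\ref{lem:infinite}, whose conclusion is precisely $\lim_{n\to\infty}x_n=0$: its proof uses \eqref{est:2.1} to bound $|x_{N+k}|<|x_N|\,e^{-L^2\sum_{i=0}^{k-1}h_{N+i}}$, and letting $k\to\infty$ drives the right-hand side to $0$ whenever the stepsize sum diverges, contradicting $L\neq 0$. Thus, modulo verifying the divergence hypothesis, the proof is a two-line combination: \eqref{cond:loginfty} $\Rightarrow$ \eqref{ineq:<} (Lemma~\ref{lem:he}) $\Rightarrow$ $L=0$ (Lemma~\ref{lem:infinite}).

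The step I expect to be the main obstacle is supplying the hypothesis $\sum_{j}h_j=\infty$ required by Lemma~\ref{lem:infinite}, since the corollary as stated lists only \eqref{cond:loginfty}. This is genuinely the crux: Lemma~\ref{lem:finite} shows that in the \emph{summable} case the same capture condition \eqref{ineq:<} instead yields $L\neq 0$, so everything hinges on the summability/non-summability dichotomy rather than on \eqref{ineq:<} alone. I would therefore devote the careful part of the argument to confirming, under the standing monotonicity assumption on $(h_n)_{n\in\mathbb N}$ and the running hypotheses, that $\sum_j h_j=\infty$ holds so that Lemma~\ref{lem:infinite} (and not Lemma~\ref{lem:finite}) is the applicable one; with that in place, $L=0$ follows immediately and the proof is complete.
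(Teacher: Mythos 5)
Your reduction is exactly the paper's own proof: the authors justify Corollary \ref{cor:convhom} with the single sentence that Lemmata \ref{lem:infinite} and \ref{lem:he} imply it, i.e.\ capture via \eqref{cond:loginfty}, then the zero-limit lemma. However, the step you call ``genuinely the crux'' and then defer --- verifying that $\sum_j h_j=\infty$ so that Lemma \ref{lem:infinite}, rather than Lemma \ref{lem:finite}, is the applicable one --- is a real gap in your proposal, and it is a gap you cannot close: condition \eqref{cond:loginfty} does \emph{not} imply divergence of $\sum_j h_j$. Take $h_n=e^{-3^{n+1}}$, which is positive, decreasing, and falls under case (i) of Lemma \ref{lem:hepartial}; then $3^{-j}\ln h_j^{-1}=3$ for every $j$, so \eqref{cond:loginfty} holds, while $\sum_j h_j=\sum_j e^{-3^{j+1}}<\infty$. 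With $x_0=1$ we have $h_0x_0^2=e^{-3}<1$, so \eqref{ineq:<} holds already at $N=0$, and Lemma \ref{lem:finite} (all of whose hypotheses are now met) gives $\lim_{n\to\infty}x_n=L\neq 0$; indeed the solution stays positive and bounded below by $x_0\exp\bigl\{-2x_0^2\sum_{j\ge 0}h_j\bigr\}>0$. So under the hypotheses as stated, the conclusion $\lim_{n\to\infty}x_n=0$ can fail.

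What this reveals is that the defect lies not in your strategy but in the corollary itself, and equally in the paper's one-line proof, which silently uses the false implication \eqref{cond:loginfty} $\Rightarrow\sum_j h_j=\infty$. The two conditions pull in opposite directions: \eqref{cond:loginfty} asks $h_n$ to decay fast enough that any initial value is eventually captured (Lemma \ref{lem:he}), while divergence of $\sum_j h_j$ asks $h_n$ to decay slowly enough that the captured solution is driven to zero (Lemma \ref{lem:infinite}). The two are compatible --- for instance, sequences held constant at the value $e^{-3^{n_i}}$ on blocks $[n_i,n_{i+1})$ whose lengths satisfy $n_{i+1}-n_i\ge e^{3^{n_i}}$ satisfy both --- but neither implies the other, and this tension is precisely why the paper must later freeze the timestep sequence to force a zero limit. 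The correct repair is therefore to add $\sum_j h_j=\infty$ as an explicit hypothesis of the corollary (after which your two-line chain of Lemma \ref{lem:he} followed by Lemma \ref{lem:infinite} is complete and correct), not to attempt to derive that divergence from \eqref{cond:loginfty}.
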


\begin{lemma}
\label{lem:hepartial}
Condition \eqref {cond:loginfty} holds if 
\be 
\item[(i)] $h_n\le e^{-3^n}$;
\item[(ii)] $h_n\le e^{-\frac{3^n}n}$;
\item[(iii)] $h_n\le e^{-\frac{3^n}{n\ln n}}$;
\item[(iv)] $h_n\le e^{-\frac{3^n}{n\ln n \ln_2 n \dots \ln_k n}}$.
\ee

\end{lemma}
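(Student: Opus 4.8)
The plan is to reduce each of the four cases to the divergence of a standard series by comparison. The starting observation is that every hypothesis has the form $h_n\le e^{-a_n}$ for a specified positive sequence $(a_n)$, so that $\ln h_n^{-1}=-\ln h_n\ge a_n$ for all $n$ large enough that $a_n$ is defined and positive. Consequently
\[
\sum_{j=0}^{\infty} 3^{-j}\ln h_j^{-1}\ge \sum_{j} 3^{-j}a_j,
\]
and since discarding finitely many nonnegative terms does not affect divergence, it suffices to show that the tail of $\sum_j 3^{-j}a_j$ diverges in each case.

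Next I would compute $3^{-j}a_j$ explicitly, the point being that the factor $3^j$ always cancels. In case (i), $a_j=3^j$ gives $3^{-j}a_j=1$, so the series is $\sum 1=\infty$ trivially. In case (ii), $a_j=3^j/j$ gives $3^{-j}a_j=1/j$, the harmonic series. In case (iii), $a_j=3^j/(j\ln j)$ gives $3^{-j}a_j=1/(j\ln j)$. In case (iv), the cancellation leaves $3^{-j}a_j=1/(j\ln j\,\ln_2 j\cdots \ln_k j)$. Thus (ii) and (iii) are the instances with no logarithmic factor and with a single logarithmic factor of the general series appearing in (iv), and it is enough to treat (iv) for arbitrary $k$.

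The divergence of this general series is exactly what Corollary \ref{cor:lnk} supplies. By the lower bound \eqref{est:lnk1}, for a suitable starting index $j$ the partial sums satisfy
\[
\sum_{i=j}^n \frac{1}{(i+1)\ln(i+1)\cdots \ln_k\left(i+e_{[k]}^1\right)}>\ln_{k+1}\left(n+1+e_{[k]}^1\right)-\ln_{k+1}\left(j+e_{[k]}^1\right),
\]
whose right-hand side tends to $+\infty$ as $n\to\infty$; hence the series diverges. Comparing $1/(j\ln j\cdots\ln_k j)$ with the summand in \eqref{est:lnk1}, which differs only by the harmless shifts $i+e_{[k]}^1$ in place of $i$, the divergence transfers to $\sum_j 3^{-j}a_j$, and therefore \eqref{cond:loginfty} holds in every case.

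The only genuine care needed, and where I expect the minor bookkeeping to live, is aligning the clean expression $1/(j\ln j\cdots\ln_k j)$ with the shifted form appearing in Corollary \ref{cor:lnk} and fixing the starting index so that every iterated logarithm is defined and positive (this forces $j\ge e_{[k]}^1$). Since convergence or divergence of a series of nonnegative terms depends only on its tail, none of these adjustments affect the conclusion, and there is no analytic obstacle beyond invoking the integral comparison already packaged in Corollary \ref{cor:lnk}.
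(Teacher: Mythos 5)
Your proposal is correct and takes essentially the same route as the paper's proof: bound $\ln h_j^{-1}\ge a_j$, note that the factor $3^{j}$ cancels to leave the constant, harmonic, and iterated-logarithm series, and conclude divergence (adjusting the starting index so all denominators are defined and nonzero). The only difference is that you explicitly justify the divergence of the iterated-logarithm series via the integral comparison in Corollary \ref{cor:lnk}, including the harmless index shifts, whereas the paper treats that divergence as standard; this is the same argument with the details filled in.
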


\begin{proof}
Case (i): we have  $3^{-j}\ln h^{-1}_{j}\geq 1$. Case (ii):  we have  $3^{-j}\ln h^{-1}_{j}\ge \frac 1j$. Cases (iii) and (iv):  we have  $3^{-j}\ln h^{-1}_{j}\ge \frac 1{j\ln j}, \dots$ {\it etc}.  Note that the series 
\[
\sum^\infty 3^{-j}\ln h^{-1}_{j}=\infty,
\]
for $h_j$ defined by each of (i)-(iv). The lower limit of summation should be chosen according to $h_j$ in order to avoid zero denominators.
\end{proof}

\begin{remark}
\label{lrem:hepartial}
Applying Lemma \ref{lem:monf} we conclude that
for $h_j$ defined by each of (i)-(iv), the corresponding $\bar N(x_0)$ can be estimated as
\be 
\item[(i)] $\bar N(x_0)>\ln |x_0|^3$;
\item[(ii)] $\ln \bar N(x_0)>\ln |x_0|^3$, so $\bar N(x_0)>|x_0|^3$;
\item[(iii)] $\ln[ \ln[\bar N(x_0)]]>\ln |x_0|^3$, so $\bar N(x_0)>e^{|x_0|^3}$;
\item[(iv)]  $\ln_{k-1}[\bar N(x_0)]>\ln |x_0|^3$, so $\bar N(x_0)>e^{e^{\dots^{|x_0|^3}}}$.
\ee
\end{remark}

\section{The perturbed deterministic cubic difference equation}
\label{sec:nohomdet}
Consider the perturbed difference equation
\begin{equation}
\label{eq:main3}
x_{n+1}=x_n(1-h_nx_n^2)+u_{n+1}, \quad x_0\in \mathbb R.
\end{equation}
where $(u_n)_{n\in\mathbb{R}}$ is a real-valued sequence. We begin by providing an estimate for solutions of \eqref{eq:main3} under condition \eqref{cond:in1}.
\begin{lemma}
\label{lem:estxn+1}
Let $(x_n)_{n\in\mathbb{N}}$ be a solution to equation \eqref{eq:main3} and let condition \eqref{cond:in1} hold. Then, for $n\le \bar N$,
\begin{equation}
\label{est:mainonhomn}
\begin{split}
 |x_{n+1}|^{\frac 1{3^{n+1}}}& < |x_0|\prod_{i=0}^{n} h_{i}^{\frac 1{3^{i+1}}}+\sum_{i=1}^{n+1}\prod_{j=i}^{n} h_{j}^{\frac 1{3^{j+1}}}| u_{i}|^{\frac 1{3^{i}}}\\&=\prod_{i=0}^{n} h_{i}^{\frac 1{3^{i+1}}}\left[ |x_0|+\sum_{i=1}^{n+1}\prod_{j=0}^{i-1} h_{j}^{-\frac 1{3^{j+1}}}| u_{i}|^{\frac 1{3^{i}}}\right].
\end{split}
\end{equation}
\end{lemma}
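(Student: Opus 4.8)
The plan is to reduce the estimate to a first-order linear recursion for the rescaled quantities $y_n := |x_n|^{1/3^n}$ and then solve that recursion explicitly. This is the perturbed analogue of Lemma~\ref{lem:estup}, with the term $u_{n+1}$ carried through the same argument.

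First I would record the one-step bound. For $n \le \bar N$, condition \eqref{cond:in1} gives $h_n x_n^2 > 1$ (in particular $x_n \neq 0$), so writing $x_n(1-h_nx_n^2) = -h_n x_n^3\left(1 - \frac{1}{h_nx_n^2}\right)$ and using $\frac{1}{h_nx_n^2} \in (0,1)$ yields $|x_n(1-h_nx_n^2)| = h_n|x_n|^3\left(1 - \frac{1}{h_nx_n^2}\right) < h_n|x_n|^3$. Applying the triangle inequality to \eqref{eq:main3} then gives
\[
|x_{n+1}| < h_n|x_n|^3 + |u_{n+1}|, \qquad n \le \bar N.
\]

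Next I would linearise. Raising this bound to the power $\frac{1}{3^{n+1}} \in (0,1)$ and invoking the subadditivity inequality \eqref{ineq:alpha}, together with $\left(|x_n|^3\right)^{1/3^{n+1}} = |x_n|^{1/3^n} = y_n$, produces
\[
y_{n+1} < h_n^{1/3^{n+1}}\, y_n + |u_{n+1}|^{1/3^{n+1}}.
\]
This is a linear recursion $y_{n+1} < a_n y_n + b_{n+1}$ with strictly positive multiplier $a_n = h_n^{1/3^{n+1}}$ (using that $h_n>0$), forcing term $b_{n+1}=|u_{n+1}|^{1/3^{n+1}}$, and initial value $y_0 = |x_0|$.

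Finally I would solve the recursion by induction on $n$. Since each $a_n>0$ and each $b_i\ge 0$, the strict inequality propagates, giving
\[
y_{n+1} < \left(\prod_{j=0}^{n} a_j\right) y_0 + \sum_{i=1}^{n+1}\left(\prod_{j=i}^{n} a_j\right) b_i,
\]
with the convention $\prod_{j=i}^{n} = 1$ for $i>n$. Substituting $a_j = h_j^{1/3^{j+1}}$ and $b_i = |u_i|^{1/3^i}$ recovers the first line of \eqref{est:mainonhomn}, and the factored second line follows from the identity $\prod_{j=i}^{n} h_j^{1/3^{j+1}} = \left(\prod_{j=0}^{n} h_j^{1/3^{j+1}}\right)\prod_{j=0}^{i-1} h_j^{-1/3^{j+1}}$. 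The computations are routine; the only real care is in the index bookkeeping of the products (the empty-product conventions at $i=n+1$ and the base case $n=0$) and in confirming that \eqref{cond:in1} is only ever invoked for $n\le\bar N$, exactly where it validates the one-step bound.
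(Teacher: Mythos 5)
Your proposal is correct and follows essentially the same route as the paper: the same one-step bound $|x_{n+1}| < h_n|x_n|^3 + |u_{n+1}|$ from \eqref{cond:in1}, the same application of \eqref{ineq:alpha} with exponent $\frac{1}{3^{n+1}}$, and the same inductive unrolling, which the paper carries out directly on $|x_n|^{1/3^n}$ without naming it $y_n$. Your rescaled-recursion notation is merely a cleaner packaging of the paper's argument, and your observation that \eqref{cond:in1} forces $x_n\neq 0$ (hence the strict inequality) is a small point the paper leaves implicit.
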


\begin{proof}
By  condition \eqref{cond:in1},  for each $n\le \bar N$ we have
\begin{equation}
\label{est:n+1}
\begin{split}
|x_{n+1}|&\le |x_n(1-h_nx_n^2)|+|u_{n+1}|\\&\le h_n|x_n|^3 \left |1-\frac 1{h_nx_n^2}\right|+|u_{n+1}|\\&\le  h_n|x_n|^3 +|u_{n+1}|.
\end{split}
\end{equation}
Applying the inequality \eqref{ineq:alpha} with $\alpha_1=\frac 13$, to \eqref{est:n+1} with $n=0$, we get
\begin{equation}
\label{est:2}
|x_1|^{\frac 13}\le h_0^{\frac 13}|x_0| +|u_{1}|^{\frac 13}.
\end{equation}
Applying the inequality \eqref{ineq:alpha} with $\alpha_2=\frac 1{3^2}$,  to \eqref{est:n+1} with $n=1$, and substituting  \eqref{est:2}, we get
\begin{equation}
\label{est:3}
|x_2|^{\frac 1{3^2}}\le h_1^{\frac 1{3^2}}|x_1|^{\frac 13} +|u_{2}|^{\frac 1{3^2}}\le  h_1^{\frac 1{3^2}} h_0^{\frac 13}|x_0|+h_1^{\frac 1{3^2}} |u_{1}|^{\frac 13}+|u_{2}|^{\frac 1{3^2}}.
\end{equation}
Continue this process inductively,  and applying  the inequality \eqref{ineq:alpha} with $\alpha_n=\frac 1{3^{n+1}}$ we get
\begin{equation*}
\begin{split}
|x_{n+1}|^{\frac 1{3^{n+1}}}&\le h_{n}^{\frac 1{3^{n+1}}} h_{n-1}^{\frac 1{3^{n}}}\dots h_1^{\frac 1{3^2}} h_0^{\frac 13}|x_0|+  h_{n}^{\frac 1{3^{n+1}}}h_{n-1}^{\frac 1{3^{n}}}\dots h_2^{\frac 1{3^2}} h_1^{\frac 13} |u_{1}|^{\frac 13}\\&+ h_{n}^{\frac 1{3^{n+1}}}h_{n-1}^{\frac 1{3^{n}}}\dots h_3^{\frac 1{3^4}}h_2^{\frac 1{3^3}} |u_{2}|^{\frac 1{3^2}}+\dots+  h_{n}^{\frac 1{3^{n+1}}}|u_{n}|^{\frac 1{3^n}} +|u_{n+1}|^{\frac 1{3^{n+1}}},
\end{split}
\end{equation*}
which completes the proof.
\end{proof}
\subsection{Boundedness of $(|x_n|)_{n\in\mathbb{N}}$ for particular $(h_n)_{n\in\mathbb{N}}$ and $(u_n)_{n\in\mathbb{N}}$}
\label{sec:boundxn}

In this section we consider two special cases of $h_n$ and $u_n$ each of which guarantees the boundedness of the sequence $(|x_n|)_{n\in \mathbb N}$. Both forms of $h_n$ were introduced in Lemma \ref{lem:hepartial}: the first corresponds to (ii)- (iv), the second corresponds to (i).  Estimates of $|u_n| $ are chosen relative to the estimates for $|h_n|$.

\subsubsection{Case 1}
\label{subsub:e3lnn}
Let $e_{[k]}^1$ and $\ln_k(\cdot)$ be defined as in \eqref{def:eklnk}.
 Assume that, there exists  $k\in \mathbb N$ and $\beta\in (0, 1)$ such that
\begin{equation}
\label{def:h1}
\begin{split}
&h_n\le \exp\left\{{-\frac{3^{n+1}}{\left(n+e_{[k]}^1\right)\ln \left(n+e_{[k]}^1\right)\dots \ln_k \left(n+e_{[k]}^1\right)}}\right\},  \\&(h_n)_{n\in \mathbb N} \quad \mbox{is a decreasing sequence},
\end{split}
\end{equation}
and 
\begin{equation}
\label{def:u1}
 |u_n|:\le \left( \frac \beta{\left(n+e_{[k]}^1\right)\ln \left(n+e_{[k]}^1\right)\dots \ln_k \left(n+e_{[k]}^1\right)}\right)^{3^{n}}.
\end{equation}

\begin{lemma}
\label{lem:N1}
Let $(x_n)_{n\in\mathbb{N}}$ be a solution to equation \eqref{eq:main3} and let $(h_n)_{n\in\mathbb{N}}$ and $(u_n)_{n\in\mathbb{N}}$ satisfy \eqref{def:h1} and  \eqref{def:u1}, respectively. Then 
\be
\item [(i)] there exists $N_1$ such that $|x_{N_1+1}|<1$, and \eqref{ineq:<} holds;
\item [(ii)] $|x_{N_1+i}|$ is uniformly bounded for all $i\in \mathbb N$.
\ee
\end{lemma}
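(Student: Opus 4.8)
The plan is to prove (i) by contradiction, in close parallel with the proof of Lemma \ref{lem:he}, but now carrying the perturbation through the estimate, and then to deduce (ii) by a direct telescoping of the recursion once the solution has been captured in the region $h_nx_n^2<1$. For (i), I would suppose that \eqref{ineq:<} never holds, i.e. $h_nx_n^2\ge 1$ for every $n$. The boundary case $h_nx_n^2=1$ is disposed of at once: then $x_{n+1}=u_{n+1}$, so $|x_{n+1}|=|u_{n+1}|<1$, and since the exponent in \eqref{def:h1} is negative we have $h_n<1$ for \emph{every} $n$, whence $h_{n+1}x_{n+1}^2<1$, contradicting the supposition. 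Thus $h_nx_n^2>1$ for all $n$, i.e. \eqref{cond:in1} holds for every $\bar N$, and Lemma \ref{lem:estxn+1} is available for all $n$. Writing $P_k(i)=(i+e_{[k]}^1)\ln(i+e_{[k]}^1)\cdots\ln_k(i+e_{[k]}^1)$, the two hypotheses \eqref{def:h1} and \eqref{def:u1} collapse to the clean estimates $h_i^{1/3^{i+1}}\le e^{-1/P_k(i)}$ and $|u_i|^{1/3^i}\le \beta/P_k(i)$, which I would insert into \eqref{est:mainonhomn}.

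The core is then to control the two resulting terms. For the leading term, $|x_0|\prod_{i=0}^n h_i^{1/3^{i+1}}\le |x_0|e^{-S(0,n)}$ with $S(0,m):=\sum_{i=0}^m 1/P_k(i)$, and Corollary \ref{cor:lnk} shows $S(0,n)\to\infty$ (at rate $\ln_{k+1}$), so this term vanishes. For the perturbation sum I would use the telescoping inequality $e^{S(0,i-1)}/P_k(i)\le e^{S(0,i)}-e^{S(0,i-1)}$ (which follows from $e^x-1\ge x$), together with $\sum_{j=i}^n 1/P_k(j)=S(0,n)-S(0,i-1)$, to obtain
\[
\sum_{i=1}^{n+1}\Bigl(\prod_{j=i}^n h_j^{1/3^{j+1}}\Bigr)|u_i|^{1/3^i}\le \beta\,e^{-S(0,n)}\bigl(e^{S(0,n+1)}-1\bigr)< \beta\,e^{1/P_k(n+1)}\longrightarrow \beta.
\]
Combining the two bounds gives $\limsup_{n}|x_{n+1}|^{1/3^{n+1}}\le\beta<1$, so $|x_{n+1}|<1$ for all large $n$; as $h_{n+1}<1$ this forces $h_{n+1}x_{n+1}^2<1$, contradicting \eqref{cond:in1}. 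Hence \eqref{ineq:<} must hold; taking $N_1+1$ to be the first index at which it does, the estimate (valid up to $\bar N=N_1$) delivers $|x_{N_1+1}|<1$, which in the substantive case (large $|x_0|$) occurs at a large $N_1$, in accordance with Remark \ref{lrem:hepartial}.

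For (ii), I first note that \eqref{def:u1} gives $|u_n|\le\beta^{3^n}$ for $n$ large, so $U:=\sum_{n}|u_n|<\infty$. Choosing $N_1$ large enough that $h_{N_1+1}(1+U)^2\le 1$ (automatic in the main case since $h_n\downarrow 0$ and the first-failure index is large), I would prove by induction that $|x_n|\le 1+\sum_{j=N_1+2}^n|u_j|\le 1+U$ for all $n\ge N_1+1$: the inductive bound $|x_n|\le 1+U$ together with $h_n\le h_{N_1+1}$ keeps $h_nx_n^2\le 1$, so $|1-h_nx_n^2|\le 1$ and $|x_{n+1}|\le|x_n|+|u_{n+1}|$, closing the induction and yielding the uniform bound $|x_{N_1+i}|\le 1+U$.

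The main obstacle I anticipate is the perturbation term: one must balance the super-exponential decay of $h_i$ (producing the factors $e^{-1/P_k(i)}$) against the $3^i$-th-root decay of $u_i$ so that the sum telescopes to a quantity whose $\limsup$ is \emph{strictly} below $1$. The constant $\beta\in(0,1)$ is exactly what makes this go through, and Corollary \ref{cor:lnk} is essential for quantifying $\sum 1/P_k(i)$ and for the telescoping step. A secondary delicate point is the bookkeeping ensuring $N_1$ is simultaneously large enough for the ``$<1$'' conclusion in (i) and for the contraction step in (ii); the degenerate situation in which \eqref{cond:in1} fails almost immediately (small $|x_0|$) falls outside the estimate and must be treated directly, where boundedness is nonetheless immediate from $|x_{n+1}|\le|x_n|+|u_{n+1}|$.
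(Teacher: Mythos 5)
Your core argument follows the paper's proof quite closely: assume \eqref{cond:in1} holds for every $n$, feed \eqref{def:h1}--\eqref{def:u1} into the estimate of Lemma \ref{lem:estxn+1}, show the right-hand side is eventually below $1$ (initial-data term vanishing, perturbation term tending to $\beta$), derive the contradiction $h_{n+1}x_{n+1}^2<1$, and then obtain (ii) by an induction that keeps $h_nx_n^2\le 1$ so that $|x_{n+1}|\le |x_n|+|u_{n+1}|$. The one genuine technical difference is the treatment of the perturbation sum. The paper applies Corollary \ref{cor:lnk} twice: once via \eqref{est:lnk1} to bound each tail product by $\ln_k(j+e_{[k]}^1)/\ln_k(n+1+e_{[k]}^1)$, and once via \eqref{est:lnk2} at level $k-1$ to bound the resulting sum by $\beta\ln_k(n+2+e_{[k]}^1)$, so the iterated logarithms cancel to give exactly $\beta$ in \eqref{est:n11}. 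You instead use the telescoping bound $e^{S(0,i-1)}/P_k(i)\le e^{S(0,i)}-e^{S(0,i-1)}$ (from $e^x-1\ge x$), which needs only one use of Corollary \ref{cor:lnk} (divergence of $S(0,n)$); I checked this, including the $i=n+1$ term, and it is correct and somewhat cleaner than the paper's index bookkeeping. Your induction for (ii), with the single condition $h_{N_1+1}(1+U)^2\le 1$ and monotonicity of $(h_n)_{n\in\mathbb{N}}$, is the paper's induction \eqref{ineq:ind11} with the explicit constant $4e^{-9}$ replaced by a tidier requirement; your explicit handling of the boundary case $h_nx_n^2=1$ is more careful than the paper, which passes over it.

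The ``degenerate'' case you flag---\eqref{cond:in1} failing before the estimate forces $|x_{n+1}|<1$---is a genuine gap, but be aware that it is equally a gap in the paper's own proof: after deriving \eqref{est:n11} under the contradiction hypothesis, the paper simply continues as if $|x_{N_1+1}|<1$ held unconditionally and never revisits the case where $h_nx_n^2\le 1$ occurs at some early $n$. However, your proposed patch does not close it: the bound $|x_{n+1}|\le|x_n|+|u_{n+1}|$ gives boundedness but never the conclusion $|x_{N_1+1}|<1$ required by (i), and in fact that conclusion can fail under the stated hypotheses. Since \eqref{def:h1} is only an upper bound, take $h_n=\min\{10^{-6},\,e^{-3^{n+1}/P_k(n)}\}$ (adjusted to be strictly decreasing), $u_n\equiv 0$, and $x_0$ large with $h_0x_0^2<1$; then $(h_n)_{n\in\mathbb{N}}$ is summable, \eqref{ineq:<} holds at $N=0$, and the argument of Lemma \ref{lem:finite} gives $x_n\ge x_0e^{-2x_0^2\sum_j h_j}>1$ for all $n$, so no $N_1$ with $|x_{N_1+1}|<1$ exists. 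Repairing this case needs something extra, such as equality in \eqref{def:h1} (as the paper itself later imposes in Lemma \ref{lem: logcond}) or a lower bound on the initial stepsizes, which keeps the cubic map's maximum over $\{h_0x^2\le 1\}$ below $1$. With that caveat, your proof and the paper's are at parity: both establish the lemma precisely on the event that \eqref{cond:in1} persists long enough for the estimate to bite.
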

\begin{proof}
Suppose to the contrary that \eqref{cond:in1} holds for all $n$. Then, by Lemma \ref{lem:estxn+1}, estimate \eqref{est:mainonhomn} holds for all $n\in \mathbb N$. 

Substituting the values of $h_n$ from \eqref{def:h1} and $u_n$ from \eqref{def:u1}
into \eqref{est:mainonhomn} we get 
\begin{equation*}
\begin{split}
&|x_{n+1}|^{\frac 1{3^{n+1}}}\le \exp\left\{-\sum_{i=0}^{n} \frac 1{\left(i+e_{[k]}^1\right)\ln \left(i+e_{[k]}^1\right)\dots \ln_k \left(i+e_{[k]}^1\right)}\right\}|x_0|\\&+ 
\sum_{i=1}^{n+1}\exp\left\{-\sum_{j=i}^{n}\frac 1{(j+e_{[k]}^1)\ln (j+e_{[k]}^1)\dots \ln_k \left(n+2+e_{[k]}^1\right)}\right\}|u_j|^{\frac1{3^i}}.
\end{split}
\end{equation*}
Now we apply the inequalities from \eqref{est:lnk1} and \eqref{est:lnk2} and get
\[
\exp\left\{-\sum_{i=j}^{n} \frac 1{\left(i+e_{[k]}^1\right)\ln \left(i+e_{[k]}^1\right)\dots \ln_k \left(i+e_{[k]}^1\right)}\right\}\le \frac{ \ln_{k} (j+e_{[k]}^1)}{ \ln_{k} (n+1+e_{[k]}^1)},
\]
and 
\[
\exp\left\{-\sum_{i=0}^{n} \frac 1{\left(i+e_{[k]}^1\right)\ln \left(i+e_{[k]}^1\right)\dots \ln_k \left(i+e_{[k]}^1\right)}\right\}\le \frac{1}{ \ln_{k} (n+1+e_{[k]}^1)}.
\]
 Applying all the above we arrive at
\begin{equation}
\label{est:n11}
\begin{split}
|x_{n+1}|^{\frac 1{3^{n+1}}}&\le  \frac{|x_0|}{ \ln_{k} (n+1+e_{[k]}^1)}+  \frac{\sum_{j=1}^{n+1}\ln_{k} (j+e_{[k]}^1)|u_j|^{\frac 1{3^j}}}{ \ln_{k} (n+2+e_{[k]}^1)}
\\
&\le  \frac{|x_0| \ln_{k}}{ \ln_{k} (n+2+e_{[k]}^1)}+  \frac{\sum_{j=1}^{n+1} \frac \beta{(j+e_{[k]}^1)\ln (j+e_{[k]}^1)\dots \ln_{k-1} (j+e_{[k]}^1)}}{ \ln_{k} (n+2+e_{[k]}^1)}
\\
&= \frac{|x_0|}{ \ln_{k} (n+1+e_{[k]}^1)}+ \beta\left(\ln_{k} (n+2+e_{[k]}^1)\right)^{-1}\ln_{k} (n+2+e_{[k]}^1)\\&= \frac{|x_0|}{ \ln_{k} (n+1+e_{[k]}^1)}+ \beta.
\end{split}
\end{equation}
So for each $\beta\in (0, 1)$ we can find $N_1$ such that, for $n\ge N_1$, 
\[
\frac{|x_0|}{ \ln_{k} (n+1+e_{[k]}^1)}+ \beta<1, 
\]
which implies $|x_{N_1+1}|<1$. 
Assume now that $N_2>2$ is such that, for $n\ge N_2$, we have 
\[
\left(n+e_{[k]}^1\right)\ln \left(n+e_{[k]}^1\right)\dots \ln_k \left(n+e_{[k]}^1\right)\le 3^{\frac n2}.
\]
Then,  for $n\ge N_2$, 
\begin{equation}
\label{est:hnN2}
h_n\le e^{-\frac{3^{n+1}}{\left(n+e_{[k]}^1\right)\ln \left(n+e_{[k]}^1\right)\dots \ln_k \left(n+e_{[k]}^1\right)}}<e^{-3^{\frac n2+1}}\le e^{-3^{2}}=e^{-9}.
\end{equation}
Without loss of generality we can assume that $N_1\ge N_2$.
We have
\[
0<1-h_{N_1+1}x_{N_1+1}^2<1,\quad |x_{N_1+2}|< |x_{N_1+1}|+|u_{N_1+2}|.
\]
Also
\[
x_{N_1+2}^2< 2x_{N_1+1}^2+2u_{N_1+2}^2, \quad \mbox{and}\quad |u_n|<1, \quad \forall n\in \mathbb N,
\]
so
\begin{equation}
\begin{split}
\label{est:nx2}
&h_{N_1+2}x_{N_1+2}^2< 2h_{N_1+2}  \left[x_{N_1+1}^2+u_{N_1+2}^2\right]=
2e^{-9}\left[x_{N_1+1}^2+1\right]\\&
= 4e^{-9}\approx 0.00049<1.
\end{split}
\end{equation}
Based on that we get 
\[
|x_{N_1+3}|< |x_{N_1+2}|+|u_{N_1+3}|< |x_{N_1+1}|+|u_{N_1+2}|+|u_{N_1+3}|.
\]
Applying induction, assume that, for some $k\in \mathbb N$, 
\begin{equation}
\label {ineq:ind11}
|x_{N_1+2+k}|\le  |x_{N_1+1}|+\sum_{i=1}^{k}|u_{N_1+2+i}| \quad \mbox{and} \quad x^2_{N_1+2+k}h_{N_1+2+k}<1,
\end{equation}
and prove that relations in \eqref{ineq:ind11} hold for $k+1$.  In order to do so we first get the estimate of $\sum_{i=1}^{k}|u_{N_1+2+i}|$. For all $n\in \mathbb N$, we have
\[
|u_{n}|\le  \left( \frac \beta{\left(n+e_{[k]}^1\right)\ln \left(n+e_{[k]}^1\right)\dots \ln_k \left(n+e_{[k]}^1\right)}\right)^{3^{n}}<\left(\frac \beta {n+e_{[k]}^1}\right)^{3^{n}}.
\]
Then, for $n\ge N_1+2\ge 4$,
\[
|u_{n}|\le  \left(\frac \beta n\right)^{3^{n}}<\left(\frac \beta n\right)^{n}\le \left(\frac \beta 4\right)^{n}
\]
and 
\begin{equation}
\label {est:sumun}
\sum_{i=1}^{k}|u_{N_1+2+i}|<\sum_{i=1}^{\infty}|u_{N_1+2+i}|\le \sum_{n=4}^{\infty} \left(\frac \beta 4\right)^{n}=\frac{\left(\frac \beta 4\right)^{4}}{1-\frac \beta 4}<\frac{4\left(\frac 14\right)^{4}}{4-\beta}<\frac1{4^3\times 3}<1.
\end{equation}
Now, 
\begin{equation*}
|x_{N_1+2+k+1}|\le  |x_{N_1+2+k}|+ |u_{N_1+2+k+1}|\le  |x_{N_1+1}|+\sum_{i=1}^{k+1}|u_{N_1+2+i}|,
\end{equation*}
proving the first part of \eqref{ineq:ind11} for each $k\in \mathbb N$, and 
\begin{equation*}
\begin{split}
h_{N_1+2+k+1}x^2_{N_1+2+k+1}&\le 2h_{N_1+2+k+1} |x_{N_1+1}|^2+2h_{N_1+2+k+1}\left(\sum_{i=1}^{k+1}|u_{N_1+2+i}|\right)^2\\&
\le 2 e^{-9}\left[1+1\right]\le 4e^{-9}<1,
\end{split}
\end{equation*}
proving the second part of \eqref{ineq:ind11} for each $k\in \mathbb N$. This completes the proof of Part (i). 

 From \eqref{ineq:ind11} and \eqref{est:sumun} we have 
\begin{eqnarray*}
&&|x_{N_1+2+k}|< |x_{N_1+1}|+1,
\end{eqnarray*}
for each $k\in \mathbb N$, which completes the proof of Part (ii).
\end{proof}

\subsubsection{Case 2}
\label{subsub:e3n}
Assume that, for some $\beta\in (0, 1)$,
\begin{equation}
\label{def:hu}
h_n\le e^{-3^{n+1}}, \quad |u_n|\le \left[ \frac {\beta(e-1)}{e}\right]^{3^{n}}.
\end{equation}
\begin{lemma}
\label{lem:N11}
The statement of Lemma \ref{lem:N1} holds if, instead of conditions \eqref{def:h1}--\eqref{def:u1}, we assume that condition \eqref{def:hu} holds.
\end{lemma}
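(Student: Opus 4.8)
The plan is to follow the proof of Lemma \ref{lem:N1} line by line, the only real change being that the exponential bound \eqref{def:hu} collapses the weighted products in \eqref{est:mainonhomn} into elementary geometric expressions rather than iterated-logarithm quotients. First I would argue by contradiction, assuming \eqref{cond:in1} holds for every $n$ so that Lemma \ref{lem:estxn+1} applies for all $n\in\mathbb N$ and \eqref{est:mainonhomn} is at my disposal. The key observation is that $h_i\le e^{-3^{i+1}}$ forces $h_i^{1/3^{i+1}}\le e^{-1}$; consequently the leading product obeys $\prod_{i=0}^n h_i^{1/3^{i+1}}\le e^{-(n+1)}$ and every inner product in the sum satisfies $\prod_{j=i}^n h_j^{1/3^{j+1}}\le e^{-(n-i+1)}$.

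Next I would insert the bound on $|u_i|$ from \eqref{def:hu}, which gives $|u_i|^{1/3^i}\le \frac{\beta(e-1)}{e}$, and combine everything into
\[
|x_{n+1}|^{1/3^{n+1}}< |x_0|\,e^{-(n+1)}+\frac{\beta(e-1)}{e}\sum_{i=1}^{n+1}e^{-(n-i+1)}.
\]
Since $\sum_{i=1}^{n+1}e^{-(n-i+1)}=\sum_{m=0}^n e^{-m}<\frac{e}{e-1}$, the second term is strictly smaller than $\beta$, leaving $|x_{n+1}|^{1/3^{n+1}}<|x_0|e^{-(n+1)}+\beta$. As $\beta\in(0,1)$ and the first summand vanishes with $n$, some $N_1$ satisfies $|x_{N_1+1}|<1$; combined with $h_{N_1+1}<1$ this gives $h_{N_1+1}x_{N_1+1}^2<1$, so \eqref{ineq:<} holds, contradicting the standing assumption and proving Part (i).

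For Part (ii) I would transcribe the induction of Lemma \ref{lem:N1}, and here the situation is in fact simpler: since $h_n\le e^{-3^{n+1}}\le e^{-9}$ already for all $n\ge 1$, no auxiliary threshold $N_2$ is needed. Writing $c=\frac{\beta(e-1)}{e}\in(0,1)$, the estimate $|u_n|\le c^{3^n}$ together with $3^n\ge n$ yields super-exponential decay and hence $\sum_{i\ge 1}|u_{N_1+2+i}|<1$. I would then show inductively that $|x_{N_1+2+k}|\le |x_{N_1+1}|+\sum_{i=1}^k|u_{N_1+2+i}|$ and $h_{N_1+2+k}x_{N_1+2+k}^2\le 4e^{-9}<1$, using $x_{N_1+2+k}^2\le 2x_{N_1+1}^2+2(\sum|u|)^2$ at each stage; uniform boundedness of $(|x_{N_1+i}|)_{i\in\mathbb N}$ is then immediate.

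I expect the only delicate point to be the calibration of the constants. The factor $\frac{\beta(e-1)}{e}$ in \eqref{def:hu} is chosen precisely so that, after multiplication by the geometric-sum bound $\frac{e}{e-1}$, the perturbation contribution reduces to exactly $\beta$, thereby reproducing the $\beta$-estimate obtained in Case 1. Checking that this cancellation is exact, and that the resulting series bound stays below $1$ so that the induction closes, is where the care lies; the remainder is a faithful copy of the earlier argument.
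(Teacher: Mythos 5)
Your proposal is correct and follows essentially the same route as the paper: substituting \eqref{def:hu} into the estimate of Lemma \ref{lem:estxn+1}, summing the geometric series to get $|x_{n+1}|^{1/3^{n+1}}\le e^{-(n+1)}|x_0|+\beta$ (the paper's \eqref{est:n2}), and then running the same induction as in Lemma \ref{lem:N1} with $h_n\le e^{-3^{n+1}}$ replacing the iterated-logarithm bounds. The only (harmless) differences are cosmetic: you bound the finite geometric sum by its infinite counterpart and use $c^{3^n}\le c^n$ for the tail of the perturbation series, where the paper computes the finite sum exactly and indexes its tail estimate from $j=4$.
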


\begin{proof}
The proof is analogous to the proof of Lemma \ref{lem:N1}.
Instead of \eqref{est:n11} we obtain
\begin{equation}
\label{est:n2}
\begin{split}
|x_{n+1}|^{\frac 1{3^{n+1}}}&\le e^{-(n+1)}|x_0|+ \frac {\beta(e-1)}{e} [e^{-n}+e^{-n+1}+\dots+ e^{-1}+1]
\\&
=e^{-(n+1)}|x_0|+  \frac {\beta(e-1)}{e}\frac{1-e^{-n-1}}{1-e^{-1}}\le e^{-(n+1)}|x_0|+ \beta.
\end{split}
 \end{equation}
Taking $ N_1\ge \ln |x_0|-\ln [1-\beta]$ we get $|x_{n+1}|<1$ for $n\ge N_1$. Instead of \eqref{est:nx2} we have 
\begin{equation*}
\begin{split}
&h_{N_1+2}x_{N_1+2}^2< 2h_{N_1+2}  \left[x_{N_1+1}^2+u_{N_1+2}^2\right]=
2e^{-3^{N_1+3}}\left[x_{N_1+1}^2+\left[ \frac {\beta(e-1)}{e}\right]^{2\cdot 3^{N_1+2}}\right]\\&
\le 2e^{-3^{N_1+3}}\left[1+1\right]\le 4e^{-3^{N_1+3}}<4e^{-3^4}<1,
\end{split}
\end{equation*}
and instead of \eqref{est:sumun} we have 
\begin{equation*}
\begin{split}
&\sum_{i=1}^{k}|u_{N_1+2+i}|= \sum_{i=1}^{k} \left[ \frac {\beta(e-1)}{e}\right] ^{3^{N_1+2+i} }\le \sum_{j=4}^{k} \left[ \frac {\beta(e-1)}{e}\right] ^{3^j}\\&< \sum_{j=4}^{k} \left[ \frac {\beta(e-1)}{e}\right] ^{j}
=\left[ \frac {\beta(e-1)}{e}\right] ^{3^4}\frac 1{1- \frac {\beta(e-1)}{e}}\\&<\left[ \frac {\beta(e-1)}{e}\right] ^{3^4} e<1.
\end{split}
\end{equation*}
The last inequality holds true since, in particular, 
\[
\left[ \frac {(e-1)}{e}\right] ^{3^4}\approx (0.6321)^81<0.3678\approx e^{-1}
\]
The rest of the proof is similar to the proof of Lemma \ref{lem:N1}. 
 \end{proof}
\subsection{Convergence of $(x_n)_{n\in\mathbb{N}}$ to a finite limit.}

\begin{lemma}
\label{lem:N1conv}
Let $(x_n)_{n\in\mathbb{N}}$ be a solution to equation \eqref{eq:main3} and let $(h_n)_{n\in\mathbb{N}}$ and $(u_n)_{n\in\mathbb{N}}$ satisfy either conditions \eqref{def:h1}-\eqref{def:u1} or condition \eqref{def:hu}. Then the sequence $(x_{k})_{k\in \mathbb N}$ converges to a finite limit as $k\to\infty$.
\end{lemma}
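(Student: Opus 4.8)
The plan is to reduce the statement to the \emph{absolute summability of the increments} $x_{n+1}-x_n$, which forces $(x_n)_{n\in\mathbb N}$ to be a Cauchy sequence and hence convergent to a finite limit. First I would invoke Lemma \ref{lem:N1} (under conditions \eqref{def:h1}--\eqref{def:u1}) or Lemma \ref{lem:N11} (under condition \eqref{def:hu}) to obtain an index $N_1$ for which \eqref{ineq:<} holds and, crucially, for which $(|x_n|)_{n\ge N_1+1}$ is uniformly bounded. I would set $M:=|x_{N_1+1}|+1$, so that $|x_n|\le M$ for every $n\ge N_1+1$ by part (ii) of those lemmata.

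Next I would rewrite the recurrence \eqref{eq:main3} in increment form, $x_{n+1}-x_n=-h_nx_n^3+u_{n+1}$, so that for all $n\ge N_1+1$,
\[
|x_{n+1}-x_n|\le h_n|x_n|^3+|u_{n+1}|\le M^3h_n+|u_{n+1}|.
\]
It then suffices to show that $\sum_n h_n$ and $\sum_n|u_n|$ both converge, since convergence of $\sum_{n\ge N_1+1}|x_{n+1}-x_n|$ makes the telescoping representation $x_n=x_{N_1+1}+\sum_{j=N_1+1}^{n-1}(x_{j+1}-x_j)$ an absolutely convergent series, whose limit $L=\lim_{n\to\infty}x_n$ is finite.

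For the summability of $(h_n)_{n\in\mathbb N}$ I would reuse the super-exponential decay already exploited in \eqref{est:hnN2}: in Case 1 the product $\left(n+e_{[k]}^1\right)\ln\left(n+e_{[k]}^1\right)\dots\ln_k\left(n+e_{[k]}^1\right)$ is eventually dominated by $3^{n/2}$, whence $h_n\le e^{-3^{n/2+1}}$ for large $n$ and $\sum_n h_n<\infty$; in Case 2 one has directly $h_n\le e^{-3^{n+1}}$, which is plainly summable. For the summability of $(u_n)_{n\in\mathbb N}$ I would reuse the geometric bounds obtained inside the proofs of the two preceding lemmata: the estimate leading to \eqref{est:sumun} gives $|u_n|\le(\beta/4)^n$ for $n\ge 4$ in Case 1, and the analogous computation in the proof of Lemma \ref{lem:N11} gives a summable bound $|u_n|\le[\beta(e-1)/e]^{3^n}$ in Case 2.

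I expect no genuine obstacle beyond bookkeeping; the single point requiring care is verifying that the polylogarithmic denominator in \eqref{def:h1} does not spoil the summability of $(h_n)_{n\in\mathbb N}$, and this is exactly the domination already established in \eqref{est:hnN2}. Once both series are known to converge, the absolute convergence of the increment series delivers the finite limit immediately, completing the proof.
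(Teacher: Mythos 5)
Your proof is correct, but it follows a genuinely different route from the paper's. The paper also begins with Lemmata \ref{lem:N1}/\ref{lem:N11} to get boundedness, but then applies Bolzano--Weierstrass to extract a convergent subsequence $x_{N_1+2+k_l}\to L$ and sandwiches the full sequence between subsequence terms using the non-expansiveness estimate $|x_{n+1}|\le|x_n|+|u_{n+1}|$ together with the summability of the tails $\sum_i|u_{N_1+2+i}|$; notably, it never uses summability of $(h_n)_{n\in\mathbb N}$. Your argument instead exploits the increment identity $x_{n+1}-x_n=-h_nx_n^3+u_{n+1}$ and the super-exponential decay of $h_n$ (as in \eqref{est:hnN2}) to get $\sum_n|x_{n+1}-x_n|\le M^3\sum_n h_n+\sum_n|u_{n+1}|<\infty$, so $(x_n)$ is Cauchy. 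Each approach buys something: the paper's subsequence argument is more general in that it would survive with a non-summable stepsize sequence (only non-expansiveness and $\sum|u_n|<\infty$ are needed), which matters elsewhere in the paper where $\sum h_n=\infty$ is the interesting regime; your Cauchy argument is more elementary and cleaner where it applies, and it directly yields convergence of $x_n$ itself rather than passing through inequalities on $|x_n|$ (the paper's final step, which drops absolute values when taking $\limsup$ and $\liminf$, silently glosses over possible sign oscillation, an issue your telescoping argument avoids entirely). Under the stated hypotheses \eqref{def:h1}--\eqref{def:u1} or \eqref{def:hu}, both $\sum h_n$ and $\sum|u_n|$ do converge, so your proof is complete as written.
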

\begin{proof}
It is sufficient to consider only the terms $\{ x_{N_1+2+k} \}_{k\in \mathbb N}$. Since the sequence $\{ x_{N_1+2+k} \}_{k\in \mathbb N}$ is bounded, it has a convergent subsequence $\{ x_{N_1+2+k_l} \}_{l\in \mathbb N}$,
\[
\lim_{l\to \infty}x_{N_1+2+k_l}=L.
\]
We now show that 
 \[
\lim_{m\to \infty}x_{N_1+2+m}=L
\]
follows. For each $m\in\mathbb N$ denote $l_m\in \mathbb N$
\[
l_m=\sup\{l: N_2+2+k_l\le m \}.
\]
Then 
\[
N_2+2+k_{l_m}\le m\le N_2+2+k_{l_{m}+1}
\]
and
\begin{equation}
\label{est:up}
|x_{N_1+2+m}|\le |x_{N_1+2+m-1}| +|u_{N_1+2+m}|\le |x_{N_1+2+k_{l_m}}| +\sum_{i=k_{l_m}}^{m}|u_{N_1+2+i}|,
\end{equation}
\begin{equation}
\label{est:down}
|x_{N_1+2+k_{l_{m}+1}}|\le |x_{N_1+2+m}| +\sum^{k_{l_m+1}}_{i=m}|u_{N_1+2+i}|
\end{equation}
Passing to the limit in \eqref{est:up} and  \eqref{est:down} we obtain, respectively,
\[
\limsup_{m\to \infty}x_{N_1+2+m}\le L, \quad \mbox{and} \quad L\le \liminf_{m\to \infty}x_{N_1+2+m}.
\]
This implies that $\lim_{m\to \infty}x_{N_1+2+m}$ exists and equal to $L$.

\end{proof}

When condition \eqref{def:hu} holds it is possible that solutions of \eqref{eq:main3} converge to a nonzero limit. Example \ref{ex:0non0} below demonstrates  that $\lim_{n\to \infty} x_n$ can be either zero or nonzero.

\begin{example}
\label{ex:0non0}
We show that the limit of solutions of \eqref{eq:main3} can be positive, zero, or negative. For all three cases below, choose $h_n=e^{-3^{n+1}}$.
\begin{enumerate}
\item[(i)] {\bf Zero limit ($L=0$).} Set 
\[
u_1=-e^{-3} \approx -0.0498, \quad u_n=0\quad\text{for all}\quad n\geq 2.
\]
Then \eqref{def:hu} is satisfied for $\beta\in(1/(e-1),1)$. The continuous function
\[
f(x)=x-e^{-3}x^3.
\]
takes its maximum $f_m=\frac 2{3\sqrt{3e^{-3}}}\approx 1.724>0.0498\approx -u_1$ at the point $x_m=\frac 1{\sqrt{3e^{-3}}}\approx 2.586$, and $f(0)=0$. So the equation 
\[
x-e^{-3}x^3=e^{-1},
\]
has a solution $x_0$ on the interval $\left(0,  \frac 1{\sqrt{3e^{-3}}}\right) \approx (0, 2.586)$. Consider now the equation \eqref{eq:main3} with this specific initial value.  We get $x_1=0$ and since all $u_n=0$ for $n\geq 2$, we have $x_n=0$ for $n\geq 2$. Therefore $\lim_{n\to\infty}x_n=0$.

\bigskip
\item[(ii)]  {\bf Positive limit ($L>0$).} Set
\[
u_1=e^{-3} \approx 0.0498, \quad u_n> 0,\quad\text{for all}\quad n\geq 2,
\]
so that \eqref{def:hu} is satisfied. Suppose also that $x_0>0$ is chosen as in case (i). Then,
\[
x_{1}=2u_1+ \underbrace{x_0(1-h_0x_0^2)-u_1}_{=0}=2u_1=2e^{-3}>0.
\] 
Moreover, note that $h_1x_1^2=2e^{-12}<1/2$. We can also write
\[
x_{n+1}\geq x_n(1-h_nx_n^2)\geq x_1\prod_{i=1}^{n}(1-h_ix_i^2).
\]
The same approach as in Lemma \ref{lem:finite} with $N=1$ gives that  $\lim_{n\to \infty} x_n>0$.

\item[(iii)] {\bf Negative limit ($L<0$).} Set
\[
u_1=-2e^{-3} \approx -0.0996, \quad u_n< 0,\quad\text{for all}\quad n\geq 2,
\]
so that \eqref{def:hu} is satisfied, and choose $x_0>0$ as in Cases (i) and (ii). Then
\[
x_1=\underbrace{x_0(1-h_0x_0^2)+\frac{u_1}{2}}_{=0}+\frac{u_1}{2}<0.
\]
Again, we see that $h_1x_1^2=e^{-18}<1/2$, and we can write for all $n\geq 1$
\[
x_{n+1}\le x_n(1-h_nx_n^2)\le x_1\prod_{i=1}^n(1-h_ix_i^2).
\]
The same approach as in Lemma \ref{lem:finite} with $N=1$ gives that  $\lim_{n\to \infty} x_n<0$.
\ee
\end{example}

\subsection{Modified process with a stopped timestep sequence $(h_n)_{n\in\mathbb{N}}$}
Based on Example \ref{ex:0non0} and Lemma \ref{lem:finite} we cannot expect that, in general, the finite limit $L$ will be zero. In order to obtain a sequence that converges to zero we modify the timestep sequence $(h_n)_{n\in\mathbb{N}}$ further by stopping it (preventing terms from varying further) after $N_3$ steps:
\begin{equation}
\label {def:hath}
\hat h_n=
\left \{
  \begin{array}{cc}
  h_n,  &  n<N_3, \\
  h_{N}, & n\ge N_3,
    \end{array}
\right.
\end{equation}
where $N_3$ is such that 
\begin{equation}
\label {ineq:N3}
|x_{N_3}|\le 1.
\end{equation} 
Note that under the conditions of Lemmas \ref{lem:N1} and \ref{lem:N11} we would have $N_3=N_1$. Note that $N_3$ is not necessarily the first moment where \eqref{ineq:N3} holds, and that \eqref{ineq:N3} implies $x_{N_3}^2h_{N_3}<1$, but the converse does not necessarily hold. 

Consider
\begin{equation}
\label{eq:main3mod}
x_{n+1}=x_n(1-\hat h_nx_n^2)+u_{n+1}, \quad x_0\in \mathbb R.
\end{equation}

\begin{lemma}
\label{lem:stop}
Let $(h_n)_{n\in\mathbb{N}}$ and $(u_n)_{n\in\mathbb{N}}$ satisfy either conditions \eqref{def:h1}-\eqref{def:u1} or condition \eqref{def:hu}. Let  $(x_n)_{n\in\mathbb{N}}$ be a solution to equation \eqref{eq:main3mod} with   $(\hat h_n)_{n\in\mathbb{N}}$ defined by \eqref {def:hath}. Then  $\lim_{n\to \infty}x_n=0$  for any initial value $x_0\in \mathbb R$.
\end{lemma}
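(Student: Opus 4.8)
The plan is to exploit that, by construction \eqref{def:hath}, the timestep is frozen at the constant value $h_{N_3}$ for every $n\ge N_3$, so that for such $n$ the solution of \eqref{eq:main3mod} obeys the constant-coefficient recurrence
\[
x_{n+1}=x_n\bigl(1-h_{N_3}x_n^2\bigr)+u_{n+1}.
\]
The starting point is Lemma \ref{lem:N1} (resp. Lemma \ref{lem:N11}), which under \eqref{def:h1}--\eqref{def:u1} (resp. \eqref{def:hu}) supplies an index $N_1$ with $|x_{N_1+1}|<1$, so that a valid choice of $N_3$ satisfying \eqref{ineq:N3} exists and we may take $N_3=N_1$. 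Since $(h_n)_{n\in\mathbb N}$ is decreasing and, by \eqref{est:hnN2}, $h_n\le e^{-9}$ once $n$ exceeds the threshold $N_2\le N_1$ (with an analogous small bound in Case 2), the frozen value satisfies $h_{N_3}x_{N_3}^2\le h_{N_3}<1$, so that \eqref{ineq:<} holds at $N_3$.

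First I would show that boundedness and the sign-preserving property persist after the freeze. Because $h_{N_3}x_{N_3}^2<1$, the factor $1-h_{N_3}x_n^2$ lies in $(0,1)$ whenever $h_{N_3}x_n^2<1$, and then $|x_{n+1}|\le|x_n|+|u_{n+1}|$. Repeating the induction used in Lemma \ref{lem:N1} (see \eqref{ineq:ind11}--\eqref{est:sumun}), and using that $\sum_i|u_{N_3+i}|<1$, this yields $|x_{N_3+k}|\le|x_{N_3}|+1<2$ together with $h_{N_3}x_{N_3+k}^2\le 4e^{-9}<1$ for every $k\in\mathbb N$. Hence $(x_n)_{n\ge N_3}$ is bounded and $1-h_{N_3}x_n^2\in(0,1)$ throughout.

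With these two facts secured, the argument of Lemma \ref{lem:N1conv} applies, since that proof uses only boundedness, the inequality $|x_{n+1}|\le|x_n|+|u_{n+1}|$, and the summability of $(u_n)_{n\in\mathbb N}$; it shows that $x_n$ converges to some finite limit $L$. To identify $L$, I would pass to the limit in the frozen recurrence: as $x_n\to L$ and $u_n\to 0$, the left-hand side tends to $L$ and the right-hand side to $L(1-h_{N_3}L^2)$, so $h_{N_3}L^3=0$, and since $h_{N_3}>0$ this forces $L=0$, which is the claim. This is precisely where freezing is essential: holding the timestep at the fixed positive value $h_{N_3}$ makes the tail sum $\sum_{n\ge N_3}\hat h_n$ infinite, and it is this non-vanishing coefficient that eliminates any nonzero limit, in contrast to Example \ref{ex:0non0} where a summable timestep permits $L\neq 0$.

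The main obstacle is the second step: confirming that boundedness and the bound $h_{N_3}x_n^2<1$ survive the freeze, since the estimates of Lemmas \ref{lem:N1} and \ref{lem:N11} were derived for the decreasing sequence $(h_n)_{n\in\mathbb N}$ rather than for its constant tail. The observation that unlocks this is that freezing only enlarges the timesteps by a bounded factor relative to their would-be continuation, while the smallness $h_{N_3}\le e^{-9}$ keeps $h_{N_3}x_n^2$ comfortably below $1$ for all bounded $x_n$; once this is in place, the convergence and the final passage to the limit are routine.
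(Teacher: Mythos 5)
Your proposal is correct and follows essentially the same route as the paper's own proof: take $N_3=N_1$ from Lemma \ref{lem:N1} (or \ref{lem:N11}), rerun the induction \eqref{ineq:ind11}--\eqref{est:sumun} to keep $\hat h_n x_n^2<1$ and the solution bounded after the freeze, invoke Lemma \ref{lem:N1conv} (which only needs $|x_{n+1}|\le|x_n|+|u_{n+1}|$ and summability of $(u_n)$) for convergence to a finite $L$, and then pass to the limit in the frozen recurrence to force $L=0$. Your limit equation $L=L(1-h_{N_3}L^2)$ is in fact the corrected form of the paper's (which has a typo, writing $L=L(1-\hat h_N L)$), and your closing observation about the non-summability of the frozen tail is exactly the paper's motivation for introducing $(\hat h_n)_{n\in\mathbb{N}}$.
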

\begin{proof}
Choose $N_1$ defined as in Lemmata \ref{lem:N1} or \ref{lem:N11} and set $N_3=N_1$.  To prove that 
\[
x_n^2\hat h_n< 1,\quad \mbox{for all} \quad n>N_3,
\]
we follow the approach taken in the proofs of Lemma \ref{lem:N1}, Part (i), and Lemma \ref{lem:N11}, Part (i). 

Let assume first  that  conditions \eqref{def:h1}--\eqref{def:u1} hold, so we use $N_1$ from Lemma \ref{lem:N1}. We have $N_3=N_1>2$, $|x_{N_3}|<1$, 
$\hat h_{N_3+1}<e^{-3^{\frac{ N_3}2+1}}$,
\[
|x_{N_3+1}|\le |x_{N_3}|+|u_{N_3+1}|
\]
and
\begin{equation*}
\begin{split}
&\hat h_{N_3+1}x_{N_3+1}^2< 2\hat h_{N_3+1}  \left[x_{N_3}^2+u_{N_3+1}^2\right]=
2e^{-3^{\frac{N_3}2+1}}\left[x_{N_3}^2+\left(\frac \beta 4\right)^{N_3}\right]\\&
\le 2e^{-3^{2}}\left[1+1\right]=4e^{-3^2}<1.
\end{split}
\end{equation*}
This gives us
\[
|x_{N+2}|\le |x_{N+1}|+|u_{N+2}|,
\]
which, as above, leads to 
\begin{eqnarray*}
\hat h_{N+2}x_{N+2}^2&<& 2\hat h_{N_3}  \left[x_{N_3}^2+u_{N_3+1}^2\right]\le 
2e^{-3^{\frac{N_3+1}2+1}}\left[1+\left(\frac \beta 4\right)^{N_3+1}\right]\\
&<&4e^{-3^2}<1.
\end{eqnarray*}
Now we complete the proof by induction and arrive at
\begin{equation}
\label{eq:lim}
|x_{N+k}|\le |x_N| +\sum_{i=1}^k|u_{N+i}|,
\end{equation}
which implies the boundedness of the sequence $(x_n)_{n\in \mathbb N}$.  Note that Lemma \ref{lem:N1conv} also holds when, instead of $(h_n)_{n\in\mathbb{N}}$  we have a stopped sequence $(\hat h_n)_{n\in\mathbb{N}}$, since its proof uses only \eqref{eq:lim} and convergence of the series $\sum_{i=1}^\infty u_i$.  So we conclude that $\lim_{n\to \infty}x_n=L$.
Passing to the limit in equation \eqref{eq:main3mod} we obtain the equality
\[
L=L(1-\hat h_N L),
\]
which holds only for $L=0$.

If  condition \eqref{def:hu} hold, we use $N_1$ from Lemma \ref{lem:N11}. The proof of this case is similar to that of the first, except that $\hat h_n\le 3^{N_3+1}$.
\end{proof}

\begin{remark}
\label{rem:quiklier}
Convergence of the solutions of equation \eqref{eq:main3mod} with stopped time-step sequence $(\hat h_n)_{n\in\mathbb{N}}$ may be slow, either if $h_{N_3}$ is very small, or if $N_3$ is large. Alternative strategies for stopping the sequence $(\hat h_n)_{n\in\mathbb{N}}$ are as follows: 
\be

\item [(i)] Define
\begin{equation}
\label{eq:N4}
N_4=\inf\{n\in \mathbb N: x_n^2h_n<1 \}, 
\end{equation}
and assume that $x_{N_4}\neq 0$.  Define
\[
\quad \hat h_n=
\left \{
  \begin{array}{cc}
  h_n,  &  n<N_4, \\
  \frac 1{x^2_{N_4}}, & n\ge N_4.
    \end{array}
\right.
\]
Then, $
|x_{N_4+1}|=|u_{N_4+1}|<1,$
and the conditions of Lemma \ref{lem:stop} hold. If $x_{N_4}= 0$, we also have  $
|x_{N_4+1}|=|u_{N_4+1}|<1.$\\

\item [(ii)] Assume that $|u_{n+1}|\le h_n$ for all $n\in \mathbb N$. Define again $N_4$ by \eqref{eq:N4}.   If $|x_{N_4}|\le 1$ the conditions of Lemma \ref{lem:stop} hold. If  $|x_{N_4}|>1$, we have
\[
|x^3_{N_4}|>1\ge \frac{|u_{N_4+1}|}{h_{N_4}}, \,\, \mbox{or} \,\, |x^3_{N_4}|h_{N_4}\ge |u_{N_4+1}|.
\]
Then,
\begin{multline*}
|x_{N_4+1}|\le |x_{N_4}|(1-x^2_{N_4}h_{N_4})+|u_{N_4+1}|\\= |x_{N_4}|-|x^3_{N_4}|h_{N_4}+|u_{N_4+1}|\le  |x_{N_4}|.
\end{multline*}
So
\[
x^2_{N_4+1}\hat h_{N_4+1}\le x^2_{N_4}h_{N_4}\le 1.
\] 
By induction it can be shown that $x^2_{N_4+k}\hat h_{N_4+k}\le 1$ for all $k\in \mathbb N$. Now, applying the same reasoning as before we can prove that $(|x_{N_4+k}|)_{k\in \mathbb N}$ is uniformly bounded and converges to zero. 
\ee
\end{remark}

\begin{lemma}
\label{lem: logcond}
Let $(h_n)_{n\in\mathbb{N}}$ and $(u_n)_{n\in\mathbb{N}}$ satisfy either conditions \eqref{def:h1}-\eqref{def:u1} or condition \eqref{def:hu} with $\beta<\frac 1{e-1}$, in all cases with equality instead of inequality in the conditions placed upon each $h_n$. Let  $(x_n)_{n\in\mathbb{N}}$ be a solution to equation \eqref{eq:main3mod} with initial value $x_0\in \mathbb R$ and  $(\hat h_n)_{n\in\mathbb{N}}$ defined by \eqref {def:hath} and \eqref{eq:N4}. Then  $\lim_{n\to \infty}x_n=0$.
\end{lemma}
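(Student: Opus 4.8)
The plan is to observe that the hypotheses have been arranged precisely so that $|u_{n+1}|\le h_n$ for every $n$; this places the problem in the setting of Remark \ref{rem:quiklier}(ii), whose conclusion is exactly $\lim_{n\to\infty}x_n=0$, and the task reduces to making that remark rigorous with the convergence machinery of Lemma \ref{lem:stop}.

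First I would establish the pointwise domination $|u_{n+1}|\le h_n$. Under condition \eqref{def:hu} with equality we have $h_n=e^{-3^{n+1}}=(1/e)^{3^{n+1}}$ and $|u_{n+1}|\le(\beta(e-1)/e)^{3^{n+1}}$, and since $\beta<1/(e-1)$ forces $\beta(e-1)/e<1/e$, the domination is immediate. Under conditions \eqref{def:h1}--\eqref{def:u1} with equality, writing $D_n=(n+e_{[k]}^1)\ln(n+e_{[k]}^1)\cdots\ln_k(n+e_{[k]}^1)$, the inequality $|u_{n+1}|\le h_n$ reduces after taking logarithms to $\beta/D_{n+1}\le e^{-1/D_n}$; since $D_n\to\infty$ this holds for all sufficiently large $n$, and the finitely many exceptional small indices do not affect the limit.

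Next I would pin down the stopping time. By Lemma \ref{lem:N1}(i) (respectively Lemma \ref{lem:N11}(i)) condition \eqref{ineq:<} is attained, so $N_4=\inf\{n:x_n^2h_n<1\}$ defined in \eqref{eq:N4} is finite, $x_{N_4}^2h_{N_4}<1$, and $\hat h_n=h_{N_4}$ for $n\ge N_4$ by \eqref{def:hath}. I would then run the induction of Remark \ref{rem:quiklier}(ii): using that $(h_n)_{n\in\mathbb N}$ is non-increasing, for $n\ge N_4$ we have $|u_{n+1}|\le h_n\le h_{N_4}=\hat h_n$, so whenever $|x_{N_4+k}|>1$ the bound $|x_{N_4+k}|^3h_{N_4}\ge|u_{N_4+k+1}|$ gives $|x_{N_4+k+1}|\le|x_{N_4+k}|$ and hence $\hat h_{N_4+k+1}x_{N_4+k+1}^2\le\hat h_{N_4+k}x_{N_4+k}^2\le1$; once $|x_{N_4+k}|\le1$ the smallness of $\hat h_{N_4}$ keeps $\hat h_nx_n^2<1$ exactly as in the proof of Lemma \ref{lem:stop}. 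This yields \eqref{ineq:<} for all $n\ge N_4$ together with an estimate of the form \eqref{eq:lim}, so $(|x_n|)_{n\in\mathbb N}$ is uniformly bounded.

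Finally, since $(\hat h_n)_{n\in\mathbb N}$ is eventually the positive constant $h_{N_4}$ and the rapid decay of $|u_n|$ in \eqref{def:u1} or \eqref{def:hu} makes $\sum_i|u_i|<\infty$, Lemma \ref{lem:N1conv}---which, as noted in the proof of Lemma \ref{lem:stop}, remains valid for the stopped sequence because it uses only boundedness and summability of the perturbation---gives $x_n\to L$ for some finite $L$. Passing to the limit in \eqref{eq:main3mod} with $\hat h_n\equiv h_{N_4}$ and $u_{n+1}\to0$ yields $L=L(1-h_{N_4}L^2)$, so $h_{N_4}L^3=0$ and therefore $L=0$. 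I expect the main obstacle to be the third step: checking that the induction of Remark \ref{rem:quiklier}(ii) genuinely preserves $\hat h_nx_n^2\le1$ across the transition where $|x_n|$ first drops below $1$, and disposing cleanly of the finitely many initial indices at which $|u_{n+1}|\le h_n$ can fail in the $k=0$ subcase of \eqref{def:h1}.
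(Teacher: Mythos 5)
Your overall route is the same as the paper's: reduce the lemma, via Lemmata \ref{lem:N1}, \ref{lem:N11} and Remark \ref{rem:quiklier}, Part (ii), to the pointwise domination $|u_{n+1}|\le h_n$; your treatment of case \eqref{def:hu} (where $\beta<\frac{1}{e-1}$ gives $\beta(e-1)/e<1/e$) is exactly the paper's. The genuine gap is in the case \eqref{def:h1}--\eqref{def:u1}: you prove $|u_{n+1}|\le h_n$ only for all sufficiently large $n$ and then assert that ``the finitely many exceptional small indices do not affect the limit.'' That assertion is unjustified, and it is not cosmetic, because the place where the domination is consumed is the induction started at $N_4=\inf\{n:x_n^2h_n<1\}$, and $N_4$ can be small (for instance $N_4=0$ when $x_0^2h_0<1$), so the ``exceptional indices'' may lie exactly in the range $n\ge N_4$ where your step 3 uses $|u_{n+1}|\le h_n$ at every step --- your step 3 thus contradicts your step 2. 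Moreover a single failure there is potentially fatal: once the timestep is frozen at $h_{N_4}$, an uncontrolled perturbation can push $x_{n+1}^2h_{N_4}$ beyond $2$, into the region where the fixed-step cubic map $x\mapsto x(1-h_{N_4}x^2)$ increases $|x|$ at every iteration, and the later, super-exponentially small, perturbations cannot restore control. So the conclusion cannot be rescued by simply ignoring finitely many indices; one must show there are none.

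That uniform statement is precisely the content of the paper's proof, which you are missing. With equality in \eqref{def:h1} and $D_n:=\left(n+e_{[k]}^1\right)\ln\left(n+e_{[k]}^1\right)\cdots\ln_k\left(n+e_{[k]}^1\right)$, one has
\[
\frac{|u_{n+1}|}{h_n}\le\exp\left\{3^{n+1}\left[\ln\beta-\ln D_{n+1}+\frac{1}{D_n}\right]\right\}
\le\exp\left\{3^{n+1}\left[\ln\beta-\ln 2+\frac{1}{2}\right]\right\}\le 1
\]
for every $n\ge 1$, since $D_{n+1}\ge n+1+e_{[k]}^1\ge 2$, $1/D_n\le 1/\left(n+e_{[k]}^1\right)\le\frac12$, and $\ln\beta-\ln 2+\frac12<0$ because $\beta<1$; this is the paper's quantity $Q(n)$ and its numerical estimate, valid uniformly in $k$. (For $k\ge 1$ there is an even quicker argument you could have used: $D_{n+1}\ge e$ and $D_n\ge 1$, so $\beta/D_{n+1}<1/e\le e^{-1/D_n}$ for every $n\ge 0$, so no exceptional indices exist at all.) With this computation inserted in place of your asymptotic argument, the remainder of your proposal --- the induction after $N_4$, the appeal to Lemma \ref{lem:N1conv} for the stopped sequence, and passing to the limit in \eqref{eq:main3mod} to conclude $h_{N_4}L^3=0$, hence $L=0$ --- is sound and coincides with the machinery the paper cites.
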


\begin{proof}
By Lemmas  \ref{lem:N1}, \ref{lem:N11} and Remark \eqref{rem:quiklier}, Part (ii), it is sufficient to show that
 $|u_{n+1}|\le h_n$. Denote 
 \[
Q(n):=\ln \beta-\sum_{i=1}^{k+1} \ln_{i}\left(n+e^1_{[k]}  \right)+\left(\prod_{i=0}^{k} \ln_{i}\left(n+e^1_{[k]}  \right)\right)^{-1}
\]
Note that, for $n\ge 1$, 
\begin{equation*}
 \begin{split}
 Q(n)&<\ln \beta- \ln\left(n+e^1_{[k]}  \right)+\frac 1{ \left(n+e^1_{[k]}\right)}\\
 &\le  \ln \beta -\ln 2+\frac 12\\
 &\approx \ln \beta-0.1931<0.
 \end{split}
  \end{equation*} 
 When conditions \eqref{def:h1}--\eqref{def:u1} hold we have, for $n\ge 1$,
   \begin{equation*}
 \begin{split}
 \frac{|u_{n+1}|}{h_n }\le \exp\left\{3^{n+1} Q(n) \right\}\le 1.
  \end{split}
  \end{equation*} 
 When  condition \eqref{def:hu} holds with $\beta(e-1)\le 1$, we have, for $n\ge 1$, 
 \[
 \frac{|u_{n+1}|}{h_n }\le (\beta(e-1))^{3^{n+1}}\le 1.
  \]
  \end{proof} 

\section{The stochastically perturbed cubic difference equation}\label{sec:randompert}
In this section we consider a stochastic difference equation
\begin{equation}
\label{eq:stoch}
x_{n+1}=x_n(1-h_nx_n^2)+\rho_{n+1}\xi_{n+1}, \quad n\in \mathbb N,\quad  x_0\in \mathbb R,
\end{equation}
where $(\xi_n)_{n\in \mathbb N}$ is a sequence of independent identically distributed random variables.
We discuss  only two cases: $|\xi_n|\le 1$ and $\xi_n\sim \mathcal N(0, 1)$.
 Denoting 
\[
u_n:=\rho_{n}\xi_{n}
\]
we can apply the results of Section \ref{sec:nohomdet} pathwise to solutions of \eqref {eq:stoch} for almost all $\omega\in \Omega$.

We also consider a stochastically perturbed equation with stopped timestep sequence $(\hat h_n)_{n\in\mathbb{N}}$
\begin{equation}
\label{eq:stochhat}
x_{n+1}=x_n(1-\hat h_nx_n^2)+\rho_{n+1}\xi_{n+1}, \quad n\in \mathbb N,\quad  x_0\in \mathbb R,
\end{equation}
where $\hat h_n$ is defined by \eqref{def:hath}  with $N_3$ selected as equal to $N_1$ from Lemmas \ref{lem:N1}, \ref{lem:N11} or as equal to $N_4$ from Remark \ref{rem:quiklier}. Note that since  solutions of \eqref{eq:stoch} are stochastic processes, $N_1$ and $N_4$ are  a.s. finite $\mathbb{N}$-valued random variables, which we therefore denote by $\mathcal N_1$ and $\mathcal N_4$, respectively.
   
\subsection{Case 1: bounded noise ($|\xi_n|\le 1$)} 
\label{subsec:bound}

In this case, for all $\omega\in \Omega$ and all $n\in \mathbb N$, we have 
\[
|u_n|=|\rho_n\xi_n|\le |\rho_n|.
\]
for all $\omega\in\Omega$. So we may apply the results of Section \ref{sec:nohomdet} to each trajectory, arriving at 
 \begin{theorem}
 \label{thm:stochunif1}
 Let $(h_n)_{n\in\mathbb{N}}$ and $(\rho_n)_{n\in\mathbb{N}}$ satisfy either conditions \eqref{def:h1}-\eqref{def:u1} or condition \eqref{def:hu} ($\rho_n$ satisfying the constraint for $u_n$). 
  Let $(\xi_n)_{n\in \mathbb N}$ be  a sequence of random variables s.t. $|\xi_n|\le 1$ for all $n\in \mathbb N$. 
 Let $(x_n)_{n\in\mathbb{N}}$ be a solution to \eqref{eq:stoch}, $(\hat h_n)_{n\in\mathbb{N}}$ defined as in \eqref{def:hath}, and $(\hat x_n)_{n\in\mathbb{N}}$ a solution to \eqref{eq:stochhat}. Then, a.s., 
 \be 
 \item[(i)] $\lim_{n\to \infty} x_n=L$, where $L$ is an a.s. finite random variable;
 \item[(ii)]  $\lim_{n\to \infty} \hat x_n=0$.
 \ee
  \end{theorem}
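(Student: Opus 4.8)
The plan is to reduce the stochastic statement to the deterministic results of Section~\ref{sec:nohomdet} by exploiting the pathwise structure already set up in the excerpt. Since $|\xi_n|\le 1$ almost surely, the identification $u_n:=\rho_n\xi_n$ gives $|u_n(\omega)|\le|\rho_n|$ for every $\omega\in\Omega$ and every $n\in\mathbb N$. Because $(\rho_n)_{n\in\mathbb N}$ is assumed to satisfy exactly the bound imposed on $(u_n)_{n\in\mathbb N}$ in either \eqref{def:u1} or \eqref{def:hu}, the deterministic sequence $(u_n(\omega))_{n\in\mathbb N}$ meets the hypotheses of the deterministic lemmata for \emph{each fixed} $\omega$. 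The entire argument is therefore a statement ``for almost all (in fact all) $\omega$, apply the deterministic theory to the realized sequence $(u_n(\omega))_{n\in\mathbb N}$.''

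\emph{First} I would prove part (i). Fix $\omega\in\Omega$. The realized perturbation sequence satisfies $|u_n(\omega)|\le|\rho_n|$, so conditions \eqref{def:h1}--\eqref{def:u1} (or \eqref{def:hu}) hold for this trajectory. Lemma~\ref{lem:N1} (respectively Lemma~\ref{lem:N11}) then yields an index $N_1=N_1(\omega)$ with $|x_{N_1+1}(\omega)|<1$, after which $(|x_n(\omega)|)_{n\in\mathbb N}$ is uniformly bounded. Lemma~\ref{lem:N1conv} applies verbatim to this bounded trajectory and gives convergence $x_n(\omega)\to L(\omega)$ to a finite limit. Since $N_1$ is determined by the trajectory, it is an $\mathbb N$-valued random variable, finite for every $\omega$ (hence a.s.\ finite), and $L$ is the pointwise limit of measurable functions, hence measurable; this establishes that $L$ is an a.s.\ finite random variable.

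\emph{For part (ii)}, the stopped equation \eqref{eq:stochhat} is precisely the modified equation \eqref{eq:main3mod} with the stopped sequence $(\hat h_n)_{n\in\mathbb N}$ from \eqref{def:hath}, again realized pathwise. Fixing $\omega$ and setting $N_3=N_1(\omega)$ so that $|x_{N_3}(\omega)|\le 1$ as required by \eqref{ineq:N3}, the hypotheses of Lemma~\ref{lem:stop} are met by the realized data $(h_n)$, $(u_n(\omega))$. Lemma~\ref{lem:stop} then delivers $\hat x_n(\omega)\to 0$. Since this holds for every $\omega$ in the full-measure set on which $|\xi_n|\le 1$ for all $n$, we obtain $\lim_{n\to\infty}\hat x_n=0$ almost surely.

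\emph{The only genuine subtlety}, and what I expect to be the main point to handle carefully, is measurability: $N_1$ and hence $N_3=\mathcal N_1$ are random, so one must confirm that stopping the timestep sequence at a random index preserves the pathwise applicability of the deterministic lemmata. This is immediate here because each deterministic lemma is a statement about an individual sequence and makes no use of the index being deterministic---the hypotheses \eqref{def:h1}--\eqref{def:u1}, \eqref{def:hu}, and \eqref{ineq:N3} hold trajectory-by-trajectory---so no adaptedness or stopping-time machinery is actually required for bounded noise. The measurability of $\mathcal N_1$ (as the first index at which an explicit deterministic bound on the measurable process $(x_n)$ is achieved) and of $L$ (as an a.s.\ pointwise limit) then follows routinely, completing the proof.
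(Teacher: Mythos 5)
Your proposal is correct and takes essentially the same route as the paper: the paper's entire argument for Theorem \ref{thm:stochunif1} is the observation that $|\xi_n|\le 1$ gives $|u_n|=|\rho_n\xi_n|\le|\rho_n|$ for every $\omega\in\Omega$ and every $n$, so the deterministic results of Section \ref{sec:nohomdet} (Lemmata \ref{lem:N1}/\ref{lem:N11} and \ref{lem:N1conv} for part (i), Lemma \ref{lem:stop} for part (ii)) apply trajectory by trajectory, exactly as you argue.
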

  
\subsection{Case 2: unbounded noise ($\xi_n\sim\mathcal N(0, 1)$).}

\begin{theorem}
 \label{thm:stochunif2}
 Let $(h_n)_{n\in\mathbb{N}}$ and $(\rho_n)_{n\in\mathbb{N}}$ satisfy either conditions \eqref{def:h1}-\eqref{def:u1} or condition \eqref{def:hu} ($\rho_n$ satisfying the constraint for $u_n$).   Let $(\xi_n)_{n\in \mathbb N}$ be  a sequence of mutually independent $\mathcal N(0, 1)$ random variables.  Let $(x_n)_{n\in\mathbb{N}}$ be a solution to \eqref{eq:stoch}, $(\bar h_n)_{n\in\mathbb{N}}$ as defined in \eqref{def:hath}, and $(\hat x_n)_{n\in\mathbb{N}}$ a solution to \eqref{eq:stochhat}. Then, a.s., 
 \be 
 \item[(i)] $\lim_{n\to \infty} x_n=L$, where $L$ is an a.s. finite random variable;
 \item[(ii)]  $\lim_{n\to \infty} \hat x_n=0$.
 \ee
  \end{theorem}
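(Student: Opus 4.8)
The plan is to reduce the Gaussian case to the deterministic framework of Section \ref{sec:nohomdet} by installing, almost surely, a pathwise bound on the perturbation $u_n=\rho_n\xi_n$ that is compatible with conditions \eqref{def:u1} or \eqref{def:hu}. First I would apply Lemma \ref{lem:norm} together with the symmetry of the standard Normal law (so that $(-\xi_n)_{n\in\mathbb N}$ is again an i.i.d.\ $\mathcal N(0,1)$ sequence) to conclude that
\[
\mathbb P\left\{\limsup_{n\to\infty}\frac{|\xi_n|}{\sqrt{2\ln n}}=1\right\}=1.
\]
Hence there is a set $\Omega_0$ of full measure such that for each $\omega\in\Omega_0$ there exists a finite $n_0(\omega)\in\mathbb N$ with $|\xi_n(\omega)|\le 2\sqrt{\ln n}$ for all $n\ge n_0(\omega)$.

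Next I would fix $\beta'\in(\beta,1)$, say $\beta'=(1+\beta)/2$, and show that on $\Omega_0$ the perturbation eventually satisfies the deterministic hypotheses with $\beta'$ in place of $\beta$. Under \eqref{def:u1} this amounts to checking
\[
2\sqrt{\ln n}\,\Bigl(\tfrac{\beta}{D_n}\Bigr)^{3^{n}}\le\Bigl(\tfrac{\beta'}{D_n}\Bigr)^{3^{n}},\qquad D_n:=\bigl(n+e_{[k]}^1\bigr)\ln\bigl(n+e_{[k]}^1\bigr)\cdots\ln_k\bigl(n+e_{[k]}^1\bigr),
\]
equivalently $2\sqrt{\ln n}\le(\beta'/\beta)^{3^{n}}$; since $\beta'/\beta>1$ the right-hand side grows doubly exponentially and dominates $\sqrt{\ln n}$, so the inequality holds for all $n$ beyond some $\omega$-dependent threshold. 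The analogous estimate handles \eqref{def:hu}. Thus, almost surely, $(u_n)$ satisfies \eqref{def:u1} (respectively \eqref{def:hu}) with the admissible constant $\beta'\in(0,1)$ for all sufficiently large $n$, and moreover $\sum_n|u_n|<\infty$ a.s.\ because $|u_n|\le 2\sqrt{\ln n}\,(\beta/D_n)^{3^{n}}$ decays doubly exponentially.

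With this in place I would run the arguments of Lemmata \ref{lem:N1}, \ref{lem:N11}, \ref{lem:N1conv} and \ref{lem:stop} pathwise on $\Omega_0$. The only modification is that the hypothesis on $u_n$ may fail for the finitely many indices below the threshold; but in the recursive bound \eqref{est:mainonhomn}, and hence in the governing estimate \eqref{est:n11}, those terms contribute only a fixed finite amount $C(\omega)$ to the numerator, which is divided by $\ln_k(n+2+e_{[k]}^1)\to\infty$ and therefore vanishes in the limit. One obtains $\limsup_n|x_{n+1}|^{1/3^{n+1}}\le\beta'<1$, which contradicts \eqref{cond:in1} holding for all $n$ and so guarantees that \eqref{ineq:<} is satisfied at an almost surely finite random time $\mathcal N_1$. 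Summability of $(u_n)$ is exactly the ingredient Lemma \ref{lem:N1conv} needs, giving part (i): $x_n\to L$ for an a.s.\ finite random variable $L$. For part (ii), freezing the timestep at $\hat h_n=h_{\mathcal N_1}$ for $n\ge\mathcal N_1$ as in \eqref{def:hath} and passing to the limit in \eqref{eq:stochhat} yields $L=L(1-h_{\mathcal N_1}L^2)$, whence $h_{\mathcal N_1}L^3=0$ and $L=0$ almost surely.

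The main obstacle I anticipate is the bookkeeping required to apply the deterministic lemmata when their hypotheses hold only eventually and at a \emph{random} index: one must verify that the finitely many early, badly-behaved noise terms are genuinely harmless (they are, precisely because of the division by the diverging iterated-logarithm factor) and that finiteness of the random stopping time $\mathcal N_1$ is preserved pathwise. Once the a.s.\ noise bound and the replacement $\beta\mapsto\beta'$ are justified, the remainder is a pathwise repetition of Section \ref{sec:nohomdet}.
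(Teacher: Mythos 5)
Your opening step coincides with the paper's: invoke Lemma \ref{lem:norm} to get an a.s.\ finite $n_0(\omega)$ beyond which $|\xi_n|$ is dominated by a power of $\ln n$ (you are in fact more careful than the paper, which glosses over the symmetry argument needed for the two-sided bound), and then absorb that logarithmic factor into the gap between $\beta$ and a larger $\beta'<1$ via the doubly-exponential decay of $(\beta/\beta')^{3^n}$. The gap lies in how you reduce to Section \ref{sec:nohomdet}. Your device of absorbing the finitely many ``bad'' noise terms into a constant $C(\omega)$ in the numerator of \eqref{est:n11} is only licensed under the contradiction hypothesis that \eqref{cond:in1} holds for \emph{all} $n$, since that hypothesis is what makes the recursion \eqref{est:mainonhomn} valid from index $0$. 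The contradiction therefore yields only that $h_nx_n^2\le 1$ at \emph{some} a.s.\ finite random index $\mathcal N_1$, and nothing forces $\mathcal N_1\ge n_0(\omega)$. Everything downstream, however --- part (ii) of Lemma \ref{lem:N1}, Lemma \ref{lem:N1conv}, Lemma \ref{lem:stop} --- is an induction whose invariants ($|x_{\mathcal N_1+1}|\le 1$, $\sum_i |u_{\mathcal N_1+2+i}|\le 1$, $h_nx_n^2<1$) require the deterministic bound on $|u_n|$ at \emph{every} index after $\mathcal N_1$. A single large Gaussian shock at an index in $[\mathcal N_1,n_0)$ destroys these invariants, and there is no diverging iterated-logarithm factor to divide by there: those noise terms enter additively, not through \eqref{est:n11}. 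So your stated reason why the early bad terms are harmless covers the existence of $\mathcal N_1$, but not the propagation after it, and if $\mathcal N_1<n_0$ your argument cannot restart, because re-deriving \eqref{est:mainonhomn} from time $n_0$ changes all the exponents (they become $3^{j-n_0}$, not $3^j$), which is not the estimate you appeal to.

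The repair is precisely the step the paper takes and that your proposal stops short of: shift time instead of absorbing terms. Set $\mathcal N=n_0(\omega)$, define $y_m=x_{m+\mathcal N}$, ${\rm h}_m=h_{m+\mathcal N}$ and the correspondingly shifted noise, as in \eqref{eq:stochN}. Because the bounding sequences in \eqref{def:h1}, \eqref{def:u1} and \eqref{def:hu} are decreasing in $n$, while the exponents $3^n$ only increase under the shift, the shifted sequences satisfy the \emph{same} hypotheses (with $\beta'$ in place of $\beta$) for every index $m\ge 0$. Then Lemmata \ref{lem:N1}/\ref{lem:N11}, \ref{lem:N1conv} and \ref{lem:stop} apply verbatim and pathwise to the shifted equation with arbitrary initial value $y_0=x_{\mathcal N}$, producing a stopping index that is automatically beyond $n_0$; this closes both parts (i) and (ii). With that single structural change your proof becomes the paper's.
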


\begin{proof}
If \eqref{def:hu} holds for  $\beta\in (0, 1)$, then for some $\beta_1\in (\beta, 1)$ we have
\[
\left[\frac {\beta(e-1)}{e}\right]^{3^n}=\left[\frac {\beta_1(e-1)}{e}\right]^{3^n}\times \left[\frac {\beta}{\beta_1}\right]^{3^n},
\]
and, for each $\varsigma>0$,
\[
\lim_{n\to 0}\left[\frac {\beta}{\beta_1}\right]^{3^n}\ln^{\frac 12+\zeta} n = 0,
 \]
 Applying Lemma \ref{lem:norm} we conclude that there exists $\mathcal N$ such that for all $n\ge \mathcal N$,
\[
\left|\frac 1{(\ln n)^{1/2+\varsigma}} \xi_n\right|<1.
\]
Then, for all $n\ge \mathcal N$,
\[
|u_{n+1}|=\left|\left[\frac {\beta_1(e-1)}{e}\right]^{3^n}\times \left[\frac {\beta}{\beta_1}\right]^{3^n}\xi_n\right|\le \left[\frac {\beta_1(e-1)}{e}\right]^{3^n}.
\]
If \eqref{def:u1} hold  holds for  $\beta\in (0, 1)$, then for some $\beta_1\in (\beta, 1)$ we use the estimate
\[
 |u_{n+1}|\le  \left( \frac{\beta_1}{\left(n+e_{[k]}^1\right)\ln \left(n+e_{[k]}^1\right)\dots \ln_k \left(n+e_{[k]}^1\right)}\right)^{3^{n} } \left[\frac {\beta}{\beta_1}\right]^{3^n}|\xi_{n+1}|,
 \] 
and apply the same reasoning  as above. 

Define for a.a. $\omega\in \Omega$ 
\[
y_m:=x_{m+\mathcal N(\omega)}, \quad u_{m+1}:=\rho_{m+\mathcal N(\omega)}\xi_{m+\mathcal N(\omega)} (\omega), \quad {\rm h}_m:= h_{m+\mathcal N(\omega)},
\]
and consider the deterministic stochastic equation
\begin{equation}
\label{eq:stochN}
y_{m+1}=y_m(1-{\rm h}_my_m^2)+u_{m+1}, \quad m\in \mathbb N, \quad y_0=x_{\mathcal N(\omega)}.
\end{equation}
Equation \eqref{eq:stochN} satisfies the conditions of either Lemma \ref{lem:N1} or Lemma  \ref{lem:N11}. So there exists 
$N_1$ (which depends on $\omega$)
such that $ h_{N_1}x_{N_1}^2<1$. The remainder of the proof follows by the same argument as that in Section \ref{sec:nohomdet}.
\end{proof}

\section{Illustrative numerical examples}
\label{sec:exsim}
In this section we illustrate the asymptotic behaviour of solutions of the unperturbed equation \eqref{eq:main3hom} with summable and non-summable timestep sequences, as described in Lemmas \ref{lem:finite} \& \ref{lem:infinite}, and the stochastically perturbed equation \eqref{eq:stoch} with unbounded Gaussian noise as described in Theorem \ref{thm:stochunif2}. 

Figure \ref{fig:detplots}, parts (a) and (b) provide three solutions of the unperturbed deterministic equation \eqref{eq:main3hom} corresponding to the initial values $x_0=1.1,0.5,-1.1$, with timestep sequence $h_n=1/n^{10}$, so that $\sum_{i=1}^{\infty}h_i<\infty$. We observe that all three solutions appear to converge to different finite limits, as predicted by Lemma \ref{lem:finite}. 

Parts (c) and (d) provide three solutions of \eqref{eq:main3hom} with the same initial values and with timestep sequence $h_n=1/n^{0.1}$, so that $\sum_{i=1}^{\infty}h_i=\infty$. Note that we have selected values of $x_0$ that are sufficiently small for \eqref{ineq:<} to hold with this choice of $h_n$, hence avoiding the possibility of blow-up. All three solutions appear to converge to a zero limit, as predicted by Lemma \ref{lem:infinite}. 

Figure \ref{fig:stochplots}, part (a) and (b) provide three solution trajectories of the stochastic equation \eqref{eq:stoch} each corresponding to initial value given by $x_0=2.5,0.5,-2.5$ with timestep sequence $h_n=e^{-\frac{3^{n+1}}{n+e}}$, satisfying \eqref{def:h1} for $k=1$, $(\xi_n)_{n\in\mathbb{N}}$ a sequence of i.i.d. $N(0,1)$ random variables, and
\begin{equation}\label{eq:comprho}
\rho_n=\left(\frac{\beta}{n+e}\right)^{3^n},
\end{equation}
with $\beta=0.5$ satisfying \eqref{def:u1} with $k=1$. We observe that all three solutions approach different nonzero limits, as predicted by Theorem \ref{thm:stochunif2}.

Parts (c) and (d) repeat the computation, but with the timestep sequence stopped so that its values become fixed when $h_nx_n^2<1$ is satisfied for the first time. Solutions demonstrate behaviour consistent with asymptotic convergence to zero, also as predicted by Theorem \ref{thm:stochunif2}.

Note that $\beta\in(0,1)$ in Condition \eqref{def:u1}, but that in Figure \ref{fig:stochplots} the effect of the stochastic perturbation decays too rapidly for differences between trajectories to be visible. Therefore in each part of Figure 3 we choose larger values of $\beta$ and generate fifteen trajectories of \eqref{eq:stoch} with $(\xi_n)_{n\in\mathbb{N}}$ a sequence of i.i.d. $\mathcal{N}(0,1)$ random variables, timestep sequence $h_n=e^{-\frac{3^{n+1}}{n+e}}$ stopped when $h_nx_n^2<1$ is satisfied for the first time, $x_0=2.5$, and each $\rho_n$ chosen to satisfy \eqref{eq:comprho}. Parts (a) and (b) show that, when $\beta=3/2$, trajectories appear to converge to zero. However, Parts (c) and (d) show that, when $\beta=3$ and $\beta=5$ respectively, trajectories may converge to a random limit that is not necessarily zero a.s.

\begin{figure}
\begin{center}
$\begin{array}{@{\hspace{-0.3in}}c@{\hspace{-0.3in}}c}
\mbox{\bf\small Short-term} & \mbox{\bf\small Long-term}\\
\scalebox{0.33}{\includegraphics{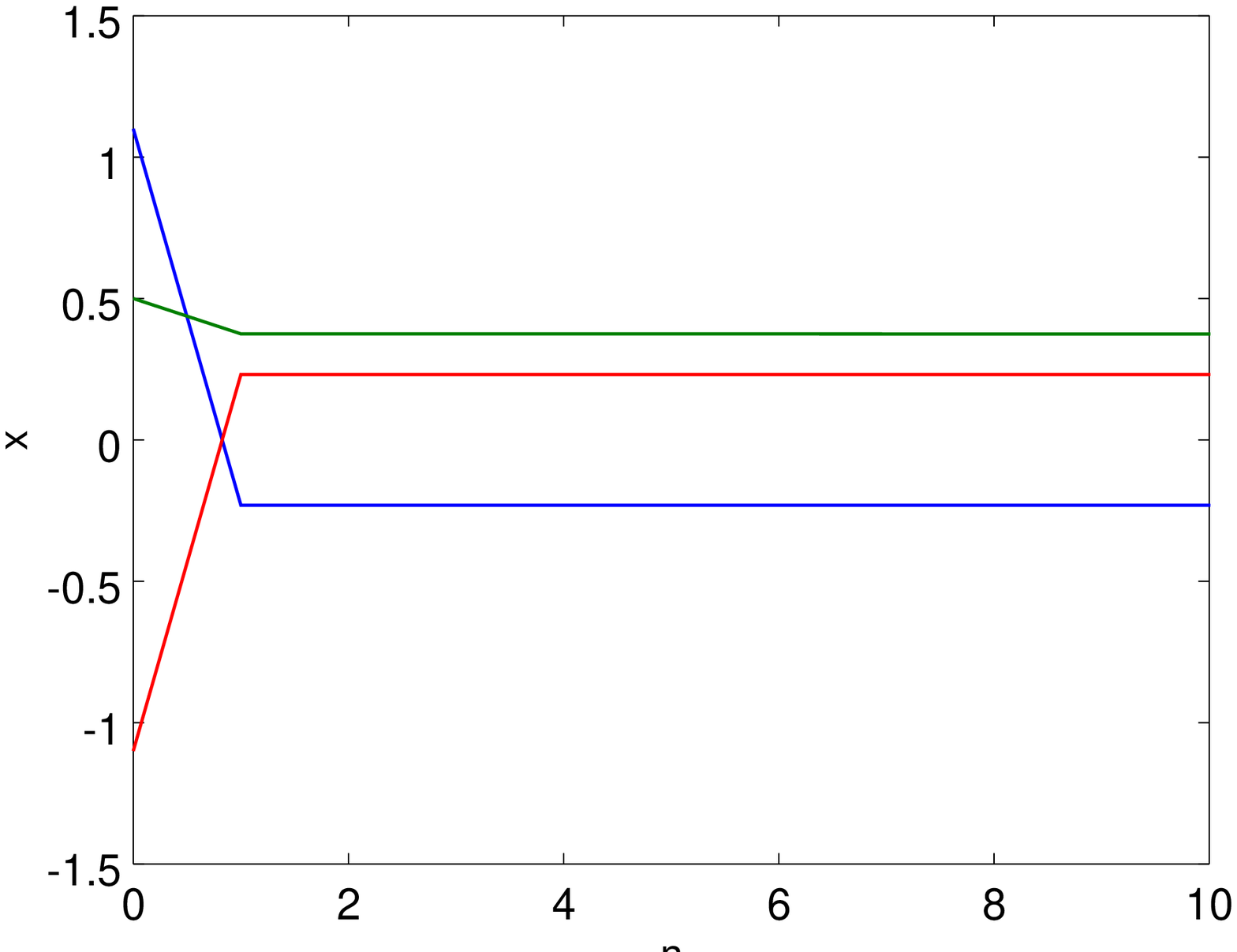}} & \scalebox{0.33}{\includegraphics{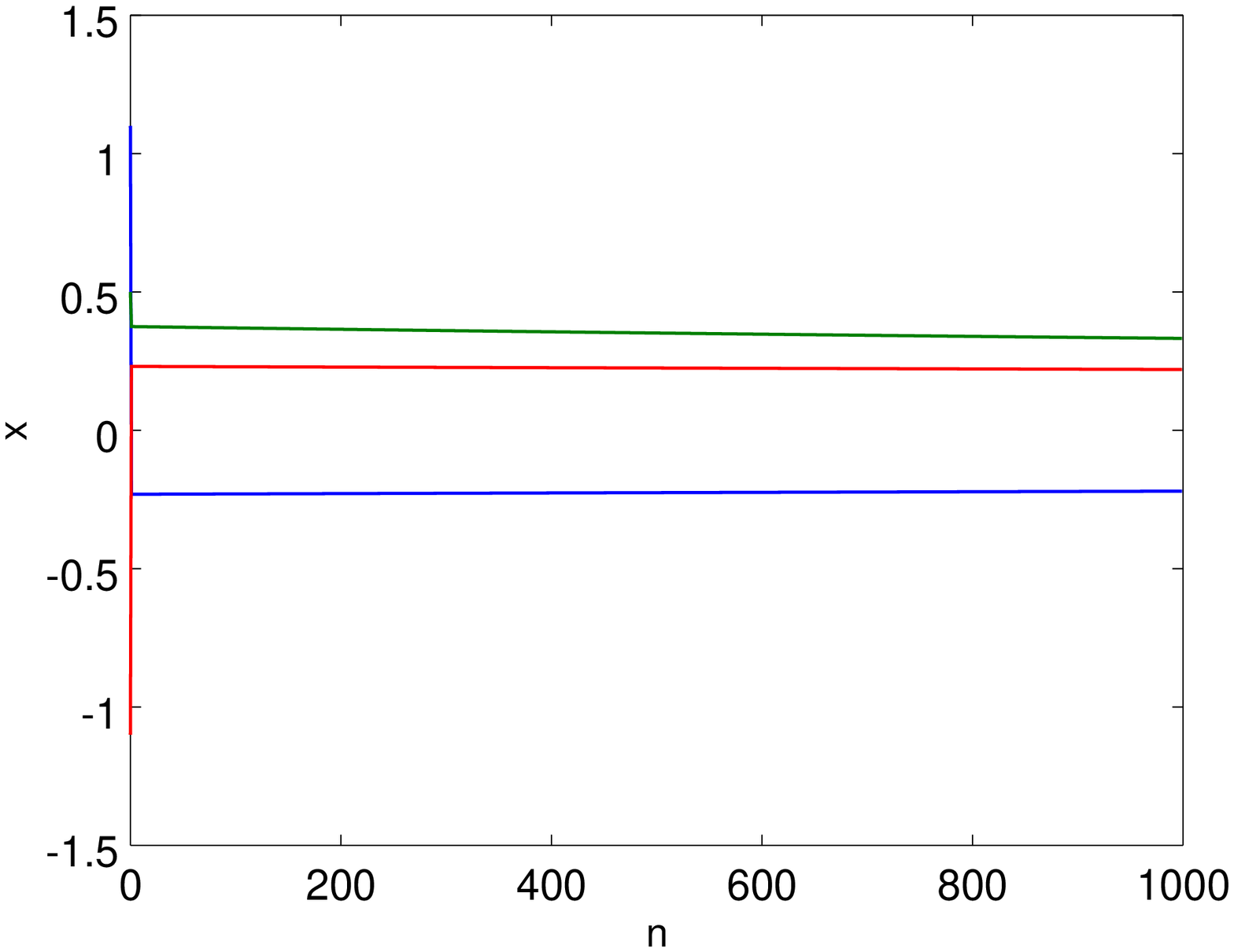}}\\ \mbox{\bf\small (a)} & \mbox{\bf\small (b)}\\
\scalebox{0.33}{\includegraphics{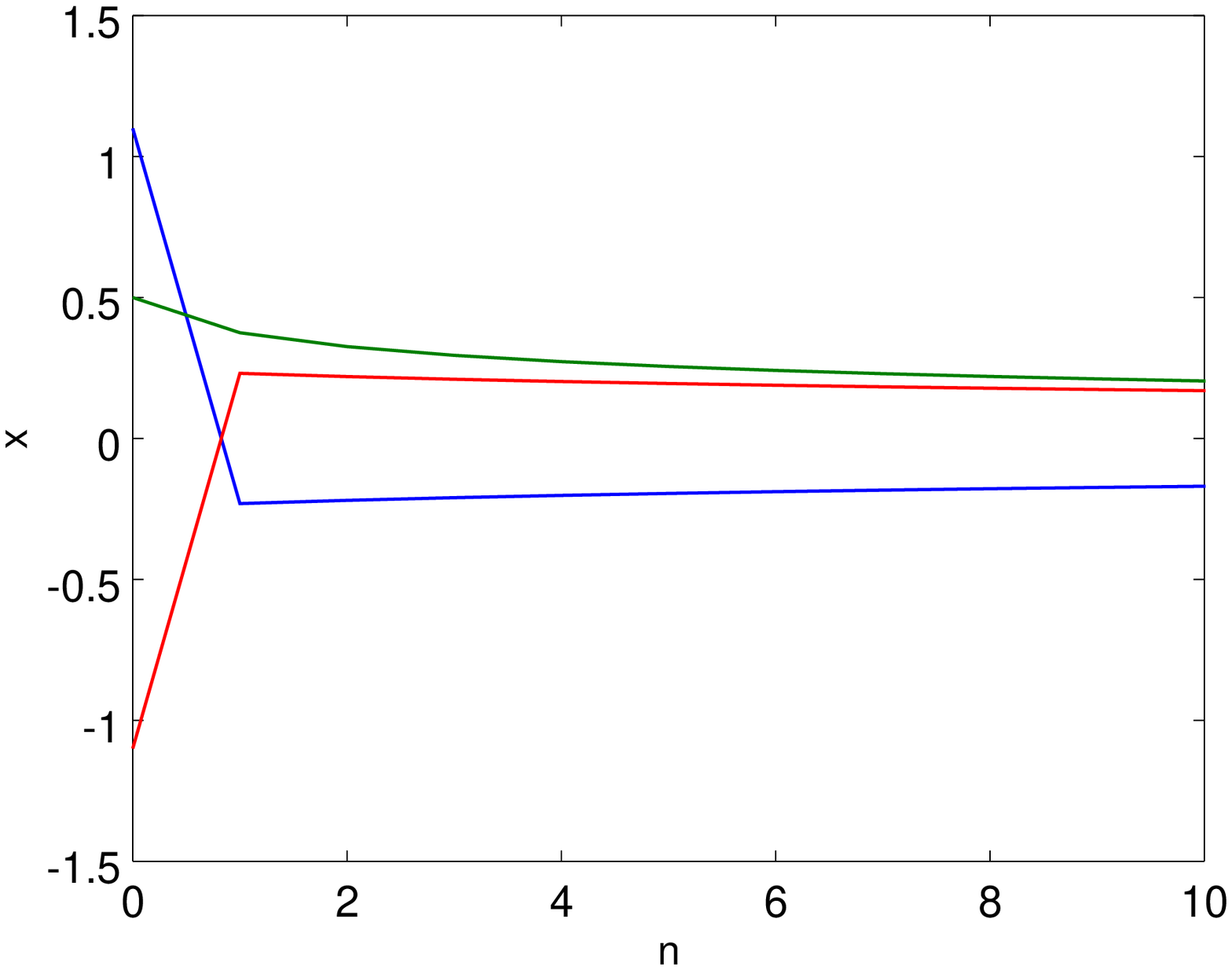}} & \scalebox{0.33}{\includegraphics{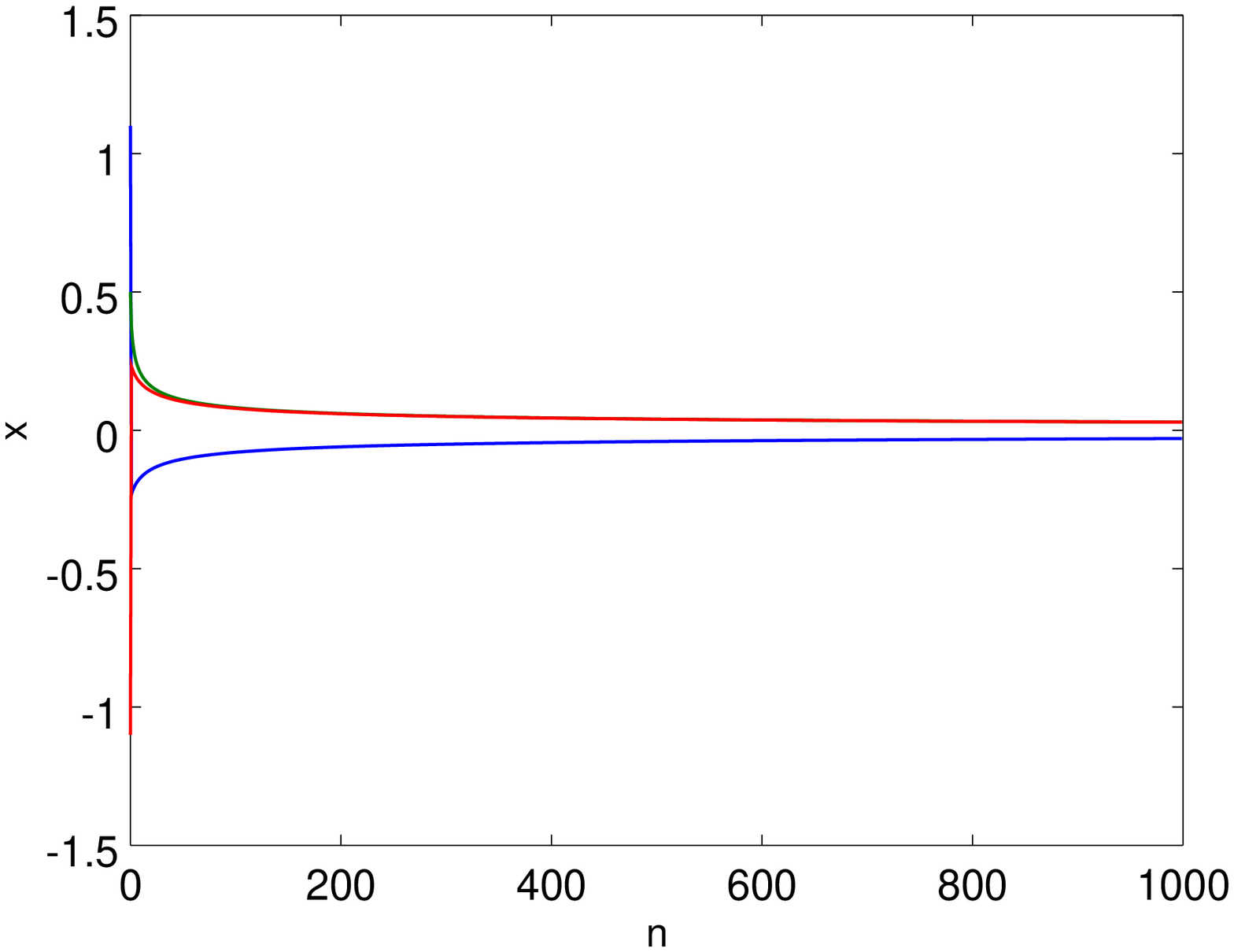}}\\ \mbox{\bf\small (c)} & \mbox{\bf\small (d)}
\end{array}$
\end{center}
\caption{Solutions of \eqref{eq:main3hom} with summable (Parts (a) and (b)) and non-summable  (Parts (c) and (d)) timestep sequences. Short term and long term dynamics are given in the first and second columns, respectively.}\label{fig:detplots}
\end{figure}

\begin{figure}
\begin{center}
$\begin{array}{@{\hspace{-0.3in}}c@{\hspace{-0.3in}}c}
\mbox{\bf\small Short-term} & \mbox{\bf\small Long-term}\\
\scalebox{0.33}{\includegraphics{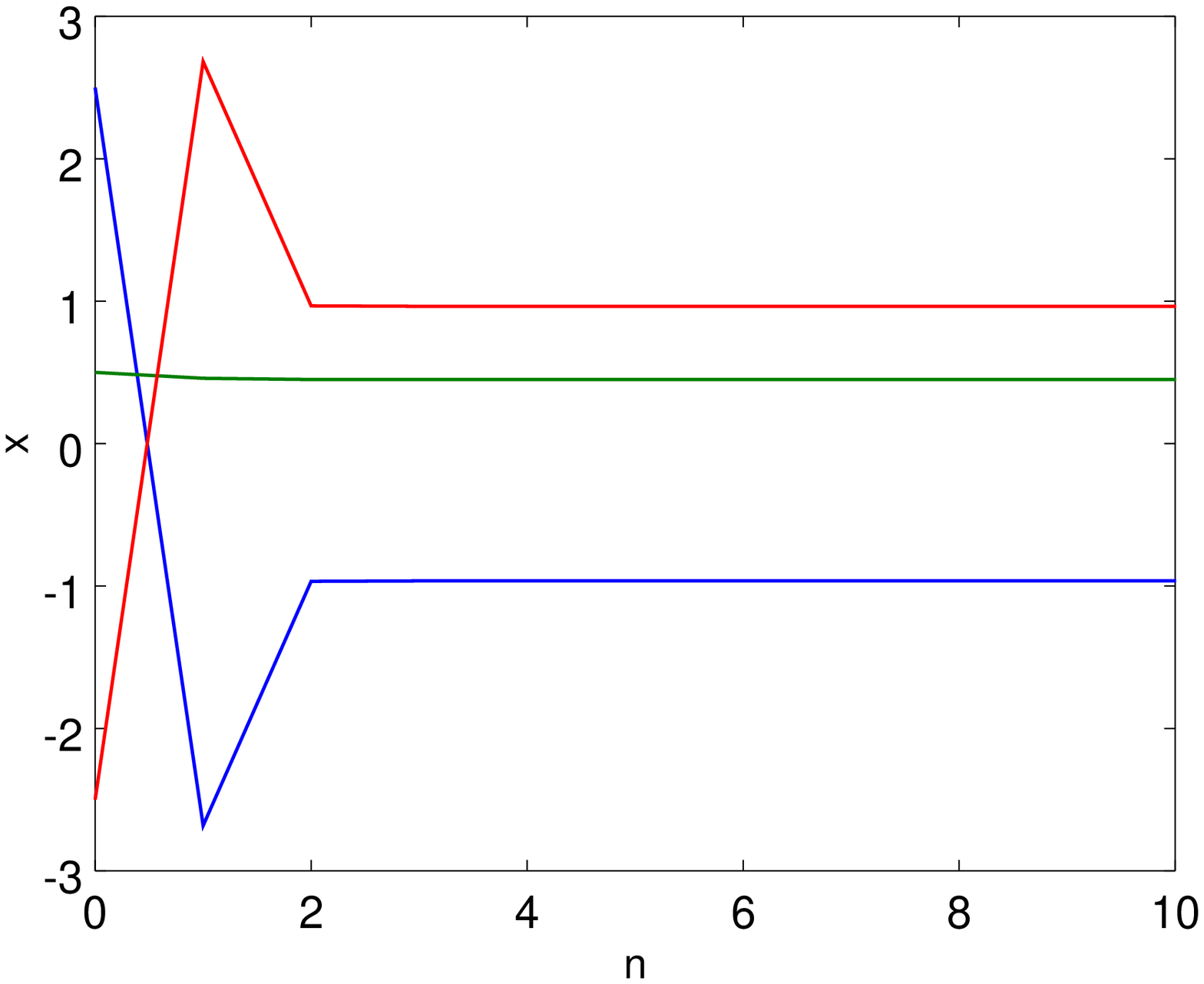}} & \scalebox{0.33}{\includegraphics{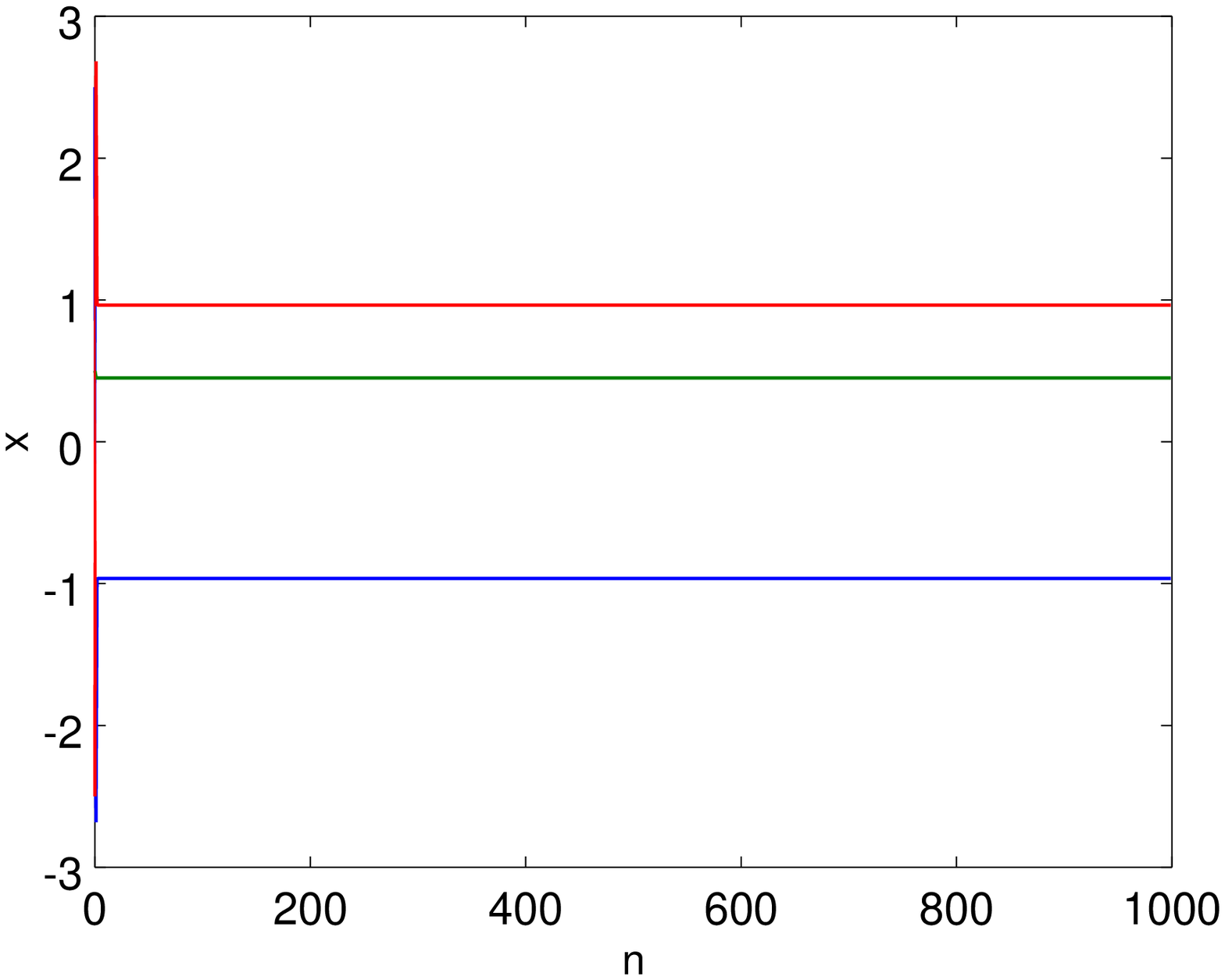}}\\ \mbox{\bf\small (a)} & \mbox{\bf\small (b)}\\
\scalebox{0.33}{\includegraphics{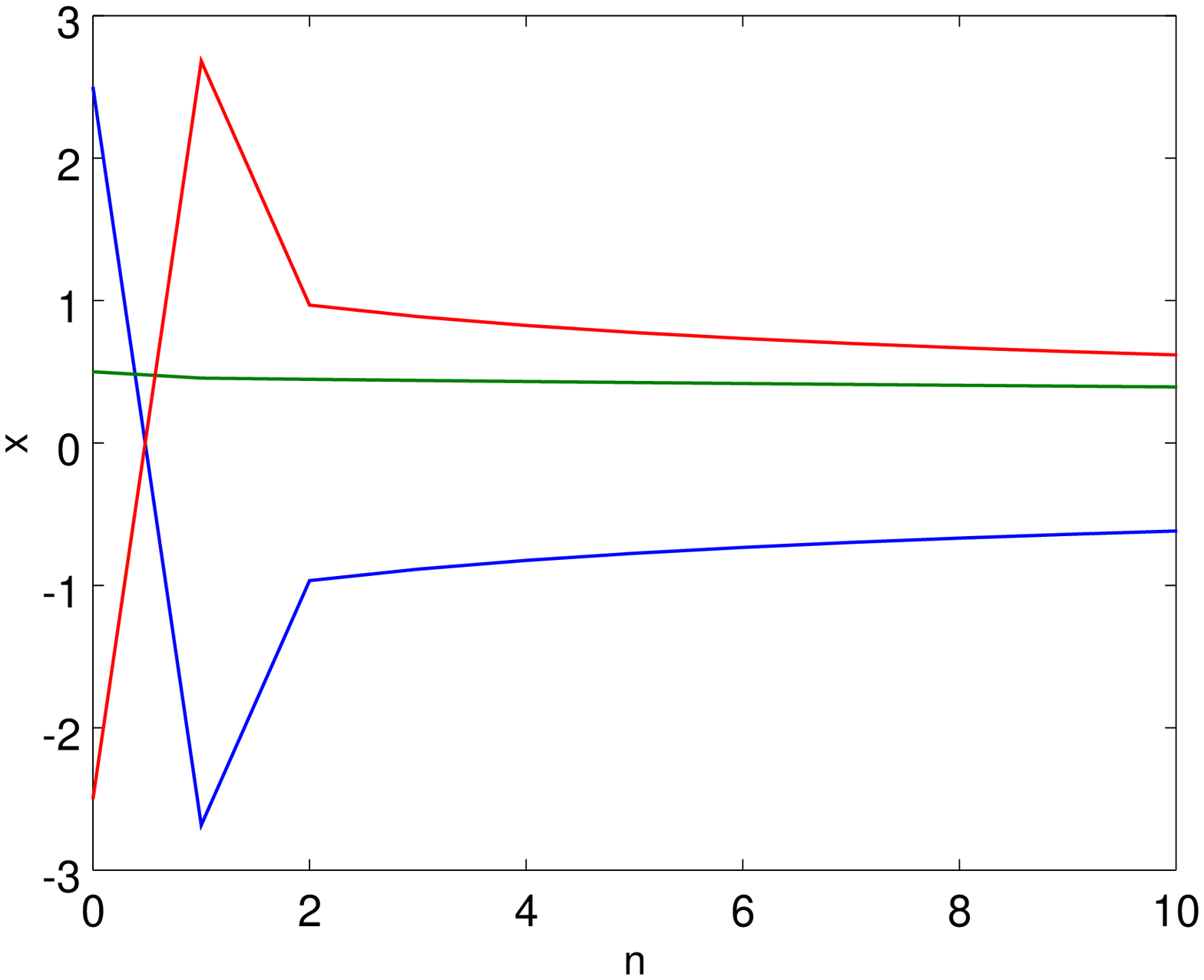}} & \scalebox{0.33}{\includegraphics{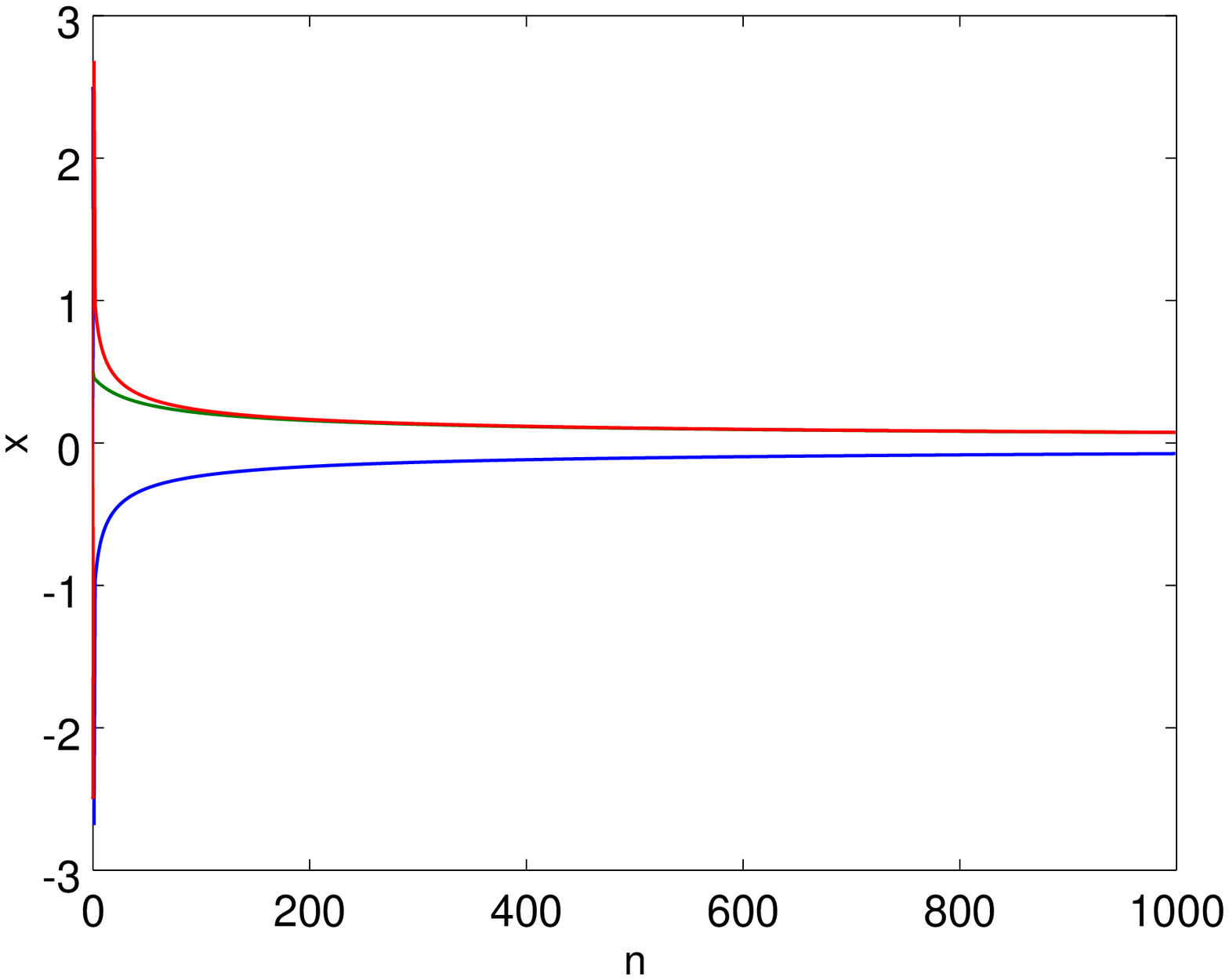}}\\ \mbox{\bf\small (c)} & \mbox{\bf\small (d)}
\end{array}$
\end{center}
\caption{Solutions of \eqref{eq:stoch} with Gaussian perturbation and non-stopped (Parts (a) and (b)) and stopped (Parts (c) and (d)) timestep sequences. Short term and long term dynamics are given in the first and second columns, respectively.}\label{fig:stochplots}
\end{figure}

\begin{figure}
\begin{center}
$\begin{array}{@{\hspace{-0.3in}}c@{\hspace{-0.3in}}c}
\scalebox{0.33}{\includegraphics{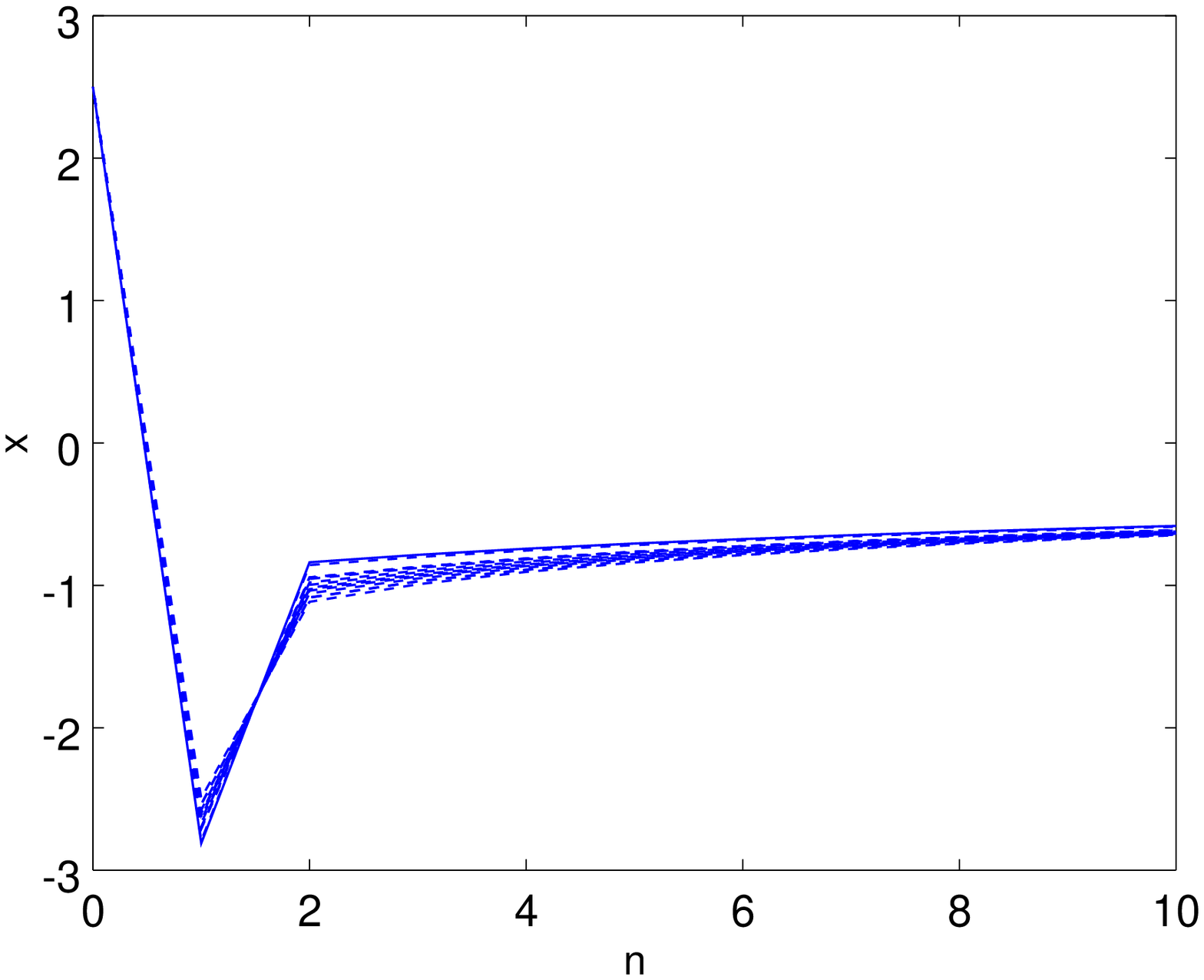}} & \scalebox{0.33}{\includegraphics{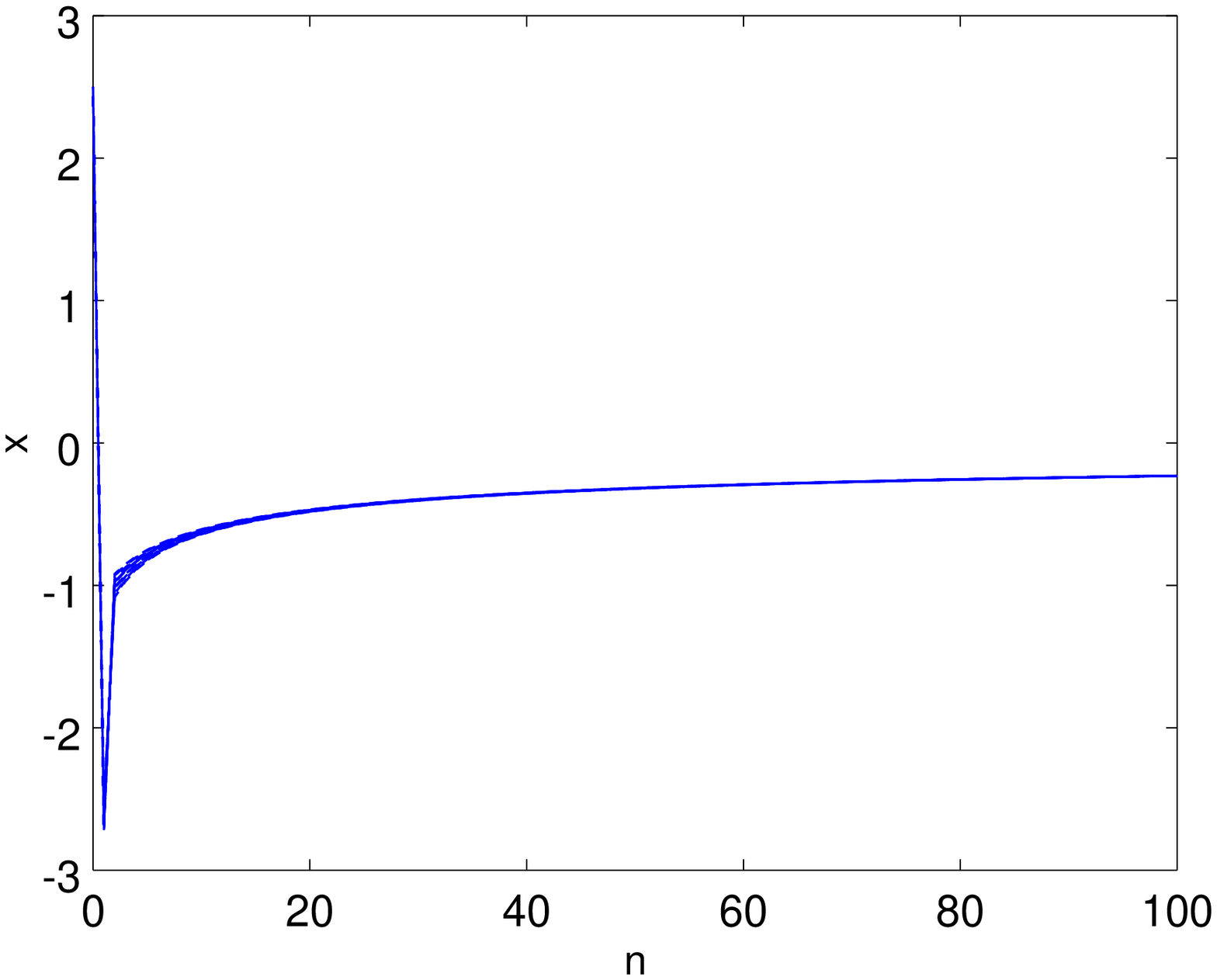}}\\ \mbox{\bf\small (a)} & \mbox{\bf\small (b)}\\
\scalebox{0.33}{\includegraphics{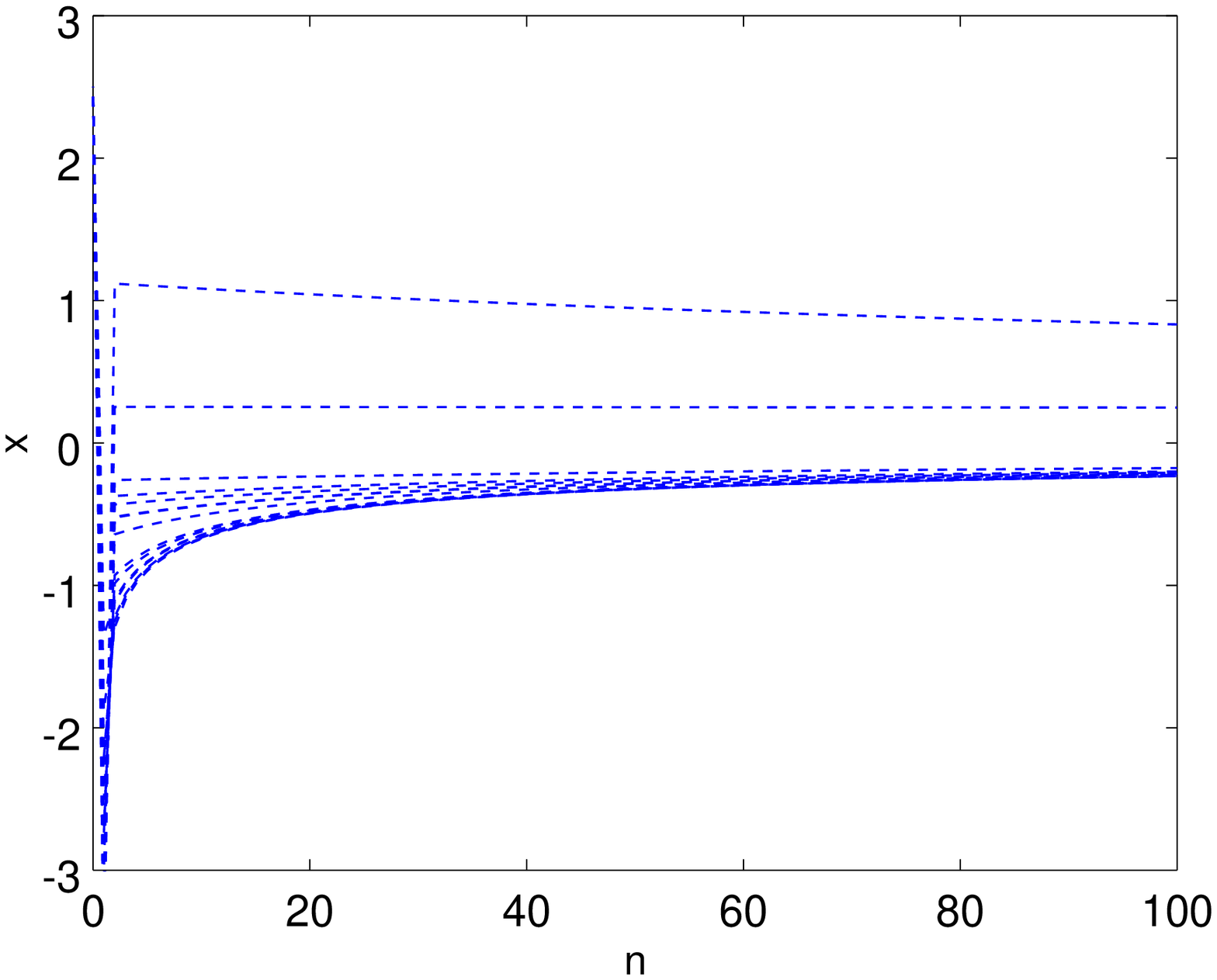}} & \scalebox{0.33}{\includegraphics{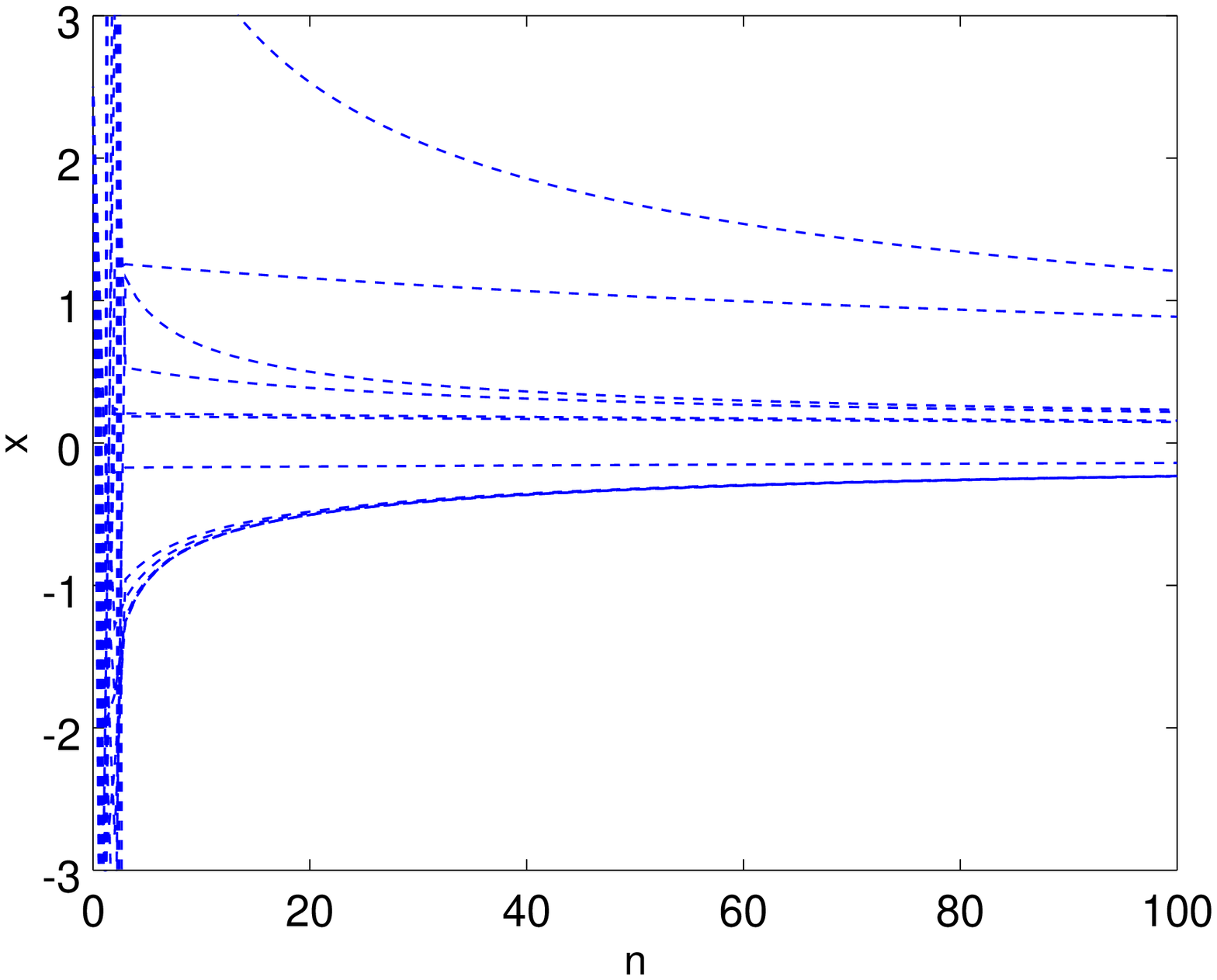}}\\ \mbox{\bf\small (c)} & \mbox{\bf\small (d)}
\end{array}$
\end{center}
\caption{Multiple trajectories of \eqref{eq:stoch} with Gaussian perturbation and stopped timestep sequence. Here, $x_0=2.5$, $\beta=3/2$ (Part (a) short-term and Part (b) long-term behaviour), $\beta=3$ (Part (c), long-term behaviour), and $\beta=5$ (Part (d), long-term behaviour).}\label{fig:stochplotsmanytraj}
\end{figure}

\section{Acknowledgment}
The third author is grateful to the organisers of the 23rd International Conference on Difference Equations and Applications, Timisoara, Romania, who supported her participation.  Discussions at the conference were quite beneficial for this research.

\end{document}